\documentclass[11pt,reqno]{amsart}
\usepackage[T1]{fontenc}
\usepackage{lmodern}
\usepackage[a4paper,textwidth=14.8cm,textheight=22.6cm,centering]{geometry}
\usepackage{amsmath,amssymb,mathtools,enumitem,booktabs,microtype}
\usepackage{xcolor}
\usepackage[colorlinks=true,linkcolor=blue!55!black,citecolor=blue!55!black,urlcolor=blue!55!black]{hyperref}
\hypersetup{pdftitle={Sharp polynomial upper bounds for anticanonical volumes},pdfauthor={Pinxian Bie, Peien Du and Zhengjie Yu},pdfsubject={Sharp polynomial volume bounds for epsilon-lc Fano varieties},pdfkeywords={Fano varieties, anticanonical volumes, log discrepancies, boundedness}}
\numberwithin{equation}{section}
\newtheorem{theorem}{Theorem}[section]
\newtheorem{proposition}[theorem]{Proposition}
\newtheorem{lemma}[theorem]{Lemma}
\newtheorem{corollary}[theorem]{Corollary}
\theoremstyle{definition}

\theoremstyle{remark}
\newtheorem{remark}[theorem]{Remark}
\newcommand{\Q}{\mathbb Q}
\newcommand{\R}{\mathbb R}
\newcommand{\C}{\mathbb C}
\newcommand{\Z}{\mathbb Z}
\newcommand{\PP}{\mathbb P}
\newcommand{\OO}{\mathcal O}
\newcommand{\simq}{\sim_{\Q}}
\newcommand{\eps}{\epsilon}

\newcommand{\vol}{\operatorname{vol}}
\newcommand{\ord}{\operatorname{ord}}
\newcommand{\Spec}{\operatorname{Spec}}

\newcommand{\Supp}{\operatorname{Supp}}
\newcommand{\Div}{\operatorname{div}}
\newcommand{\Nklt}{\operatorname{Nklt}}
\newcommand{\trdeg}{\operatorname{trdeg}}
\newcommand{\Nm}{\operatorname{Nm}}
\newcommand{\degh}{\deg_{\mathrm h}}
\newcommand{\coef}{\operatorname{coeff}}
\newcommand{\lct}{\operatorname{lct}}

\newcommand{\cD}{\mathcal D}

\newcommand{\ba}{\mathfrak a}
\newcommand{\bb}{\mathfrak b}
\newcommand{\bj}{\mathfrak j}
\newcommand{\bM}{\mathbf M}
\setlist[enumerate]{label=\textup{(\arabic*)},leftmargin=2em,itemsep=3pt}
\title[Sharp polynomial anticanonical volume bounds]{Sharp polynomial upper bounds for anticanonical volumes}
\author{Pinxian Bie, Peien Du and Zhengjie Yu}
\address{School of Mathematical Sciences, Fudan University, Shanghai, China}
\date{October 7, 2026}
\subjclass[2020]{Primary 14J45; Secondary 14E30, 14B05}
\keywords{Fano variety, anticanonical volume, log discrepancy, boundedness, canonical bundle formula}

\begin{document}
\begin{abstract}
For every positive integer $n$, we prove that there is a constant $C_n$, depending only on $n$, such that
$\vol(-K_X)\le C_n\eps^{-(2^n-n-1)}$
whenever $X$ admits an $\eps$-lc log Fano boundary. The exponent is optimal in every dimension at least two, even for toric Fano varieties of Picard number one. In particular, the optimal exponent for fourfolds is eleven. We also prove an anticanonical interpolation theorem with optimal exponent $2^n-1$. The proof combines signed discrepancy estimates under projection, finite morphisms to projective space, and the canonical bundle formula. A reduction to a base bounded independently of $\eps$, followed by a volume estimate along a flag, yields the sharper volume exponent.
\end{abstract}
\maketitle
\setcounter{tocdepth}{1}
\tableofcontents

\section{Introduction}\label{sec:intro}
Throughout the paper, varieties are defined over $\C$.

Fano varieties are one of the basic objects in the birational classification of algebraic varieties. Birkar's proof of the Borisov--Alexeev--Borisov conjecture shows that, for fixed dimension $n$ and fixed $\eps>0$, Fano varieties with $\eps$-lc singularities form a bounded family \cite{Bir19,Bir21}. In particular, their anticanonical volumes are bounded. A quantitative question remains: how rapidly must a volume bound grow as the singularity parameter tends to zero?

The distinction between a bound for each fixed parameter and a polynomial bound in its inverse is substantial. The constants supplied by boundedness alone do not specify this dependence. Moreover, weighted projective spaces give examples whose volumes grow rapidly as their singularities worsen. They impose a precise candidate for the smallest possible exponent in each dimension.

In dimension three, Jiang and Zou \cite[Theorem~1.1]{JZ24} proved the sharp-order estimate $3200\eps^{-4}$ for varieties of $\eps$-Fano type, using the strict $\eps$-klt convention. Their proof refines earlier volume arguments of Jiang \cite{Jiang21}. Its main ingredients include a cubic interpolation estimate on surfaces and a suitable reduction of the base of a Mori fibre space. Birkar \cite{Bir4} obtained an explicit bound for canonical Fano fourfolds and an explicit estimate for $\eps$-lc Fano threefolds, following the covering-centre approach in \cite{Bir19}. The dependence on $\eps$ in these arguments is sensitive to both the threshold estimate and the final passage from thresholds to volumes.

We prove a polynomial bound in every dimension, with the exponent forced by the toric examples. Let $0<\eps\le1$, a variety of \emph{$\eps$-Fano type} means a normal projective $\Q$-Gorenstein variety $X$ for which there is an effective $\Q$-divisor $\Delta$ such that $(X,\Delta)$ is $\eps$-lc and $-(K_X+\Delta)$ is ample. Thus the parameter is associated to a log Fano boundary, not merely to the underlying variety.

\begin{theorem}\label{thm:main}
For every positive integer $n$ there exists $C_n>0$, depending only on $n$, such that
\begin{equation}\label{eq:main}
 \vol(-K_X)\le C_n\eps^{-a_n},\qquad a_n=2^n-n-1,
\end{equation}
for every $n$-dimensional variety $X$ of $\eps$-Fano type. In particular, the estimate holds for $\eps$-lc Fano and weak Fano varieties.

For every $n\ge2$, the exponent $a_n$ cannot be decreased, even if $X$ is required to be a $\Q$-factorial toric Fano variety of Picard number one.
\end{theorem}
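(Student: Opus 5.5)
We split the statement into two parts: the upper bound \eqref{eq:main}, proved by induction on $n$, and the sharpness, proved by explicit weighted projective spaces. The recursion behind the exponent is $a_{n}=2a_{n-1}+(n-1)$ with $a_1=0$. This gives $a_2=1$ and $a_3=4$, matching the Jiang--Zou bound, and $a_4=11$.

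\textbf{Sharpness.} We take $X=\PP(w_0,\dots,w_n)$ with Sylvester-type weights. Set $s_0=2$ and $s_{k+1}=s_k^2-s_k+1$, so the $s_k$ are $2,3,7,43,\dots$. We perturb the weights so that $X$ is $\eps$-lc with $\eps\approx 1/s_{n-1}$, roughly by scaling the last weight. Then $\vol(-K_X)=(\sum w_i)^n/\prod w_i$. Since $s_k\sim c^{2^k}$ doubly exponentially, writing everything in terms of $\eps^{-1}$ should produce growth $\eps^{-(2^n-n-1)}$. For instance, in dimension $2$, $\PP(1,1,m)$ has $\eps=2/m$ and volume $(m+2)^2/m\sim m$, which gives exponent $1=a_2$. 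We check the toric discrepancies by the standard lattice-point formula: the minimal log discrepancy of a simplicial cone is the minimum of $\sum$ of barycentric coordinates over nonzero lattice points. We also check that these varieties are $\Q$-factorial of Picard number one.

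\textbf{Upper bound.} First we run an MMP on $-K_X$ to reduce to a Mori fibre space or to the Picard number one case; this does not decrease the volume and preserves $\eps$-lc of a suitable boundary. The main tool is an interpolation step. Given a point $x$ general and a large multiple of $-K_X$, volume $v$ yields divisors $D\simq -K_X$ with multiplicity about $v^{1/n}$ at $x$. Using the $\eps$-lc condition, this forces a non-klt centre of $(X,\Delta+cD)$ with $c\sim \eps^{-?}v^{-1/n}$-type estimates. We then cut down to a positive-dimensional centre $G$ and apply the induction hypothesis on $G$, via adjunction and the canonical bundle formula, with a worse parameter of order $\eps^2$. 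Tracking exponents gives a recursion of the shape $\vol\lesssim \eps^{-(n-1)}\cdot(\text{bound on lower-dimensional piece})^2$. For a Mori fibre space $X\to Z$, the canonical bundle formula gives $(Z,B_Z+M_Z)$ generalized $\delta$-lc with $\delta$ polynomial in $\eps$. The volume then splits as $\vol(-K_F)$ on the general fibre times a degree on the base, and both factors are handled inductively. The paper's abstract suggests the refinement: reduce to a base that is bounded independently of $\eps$, via a finite morphism to projective space. The remaining $\eps$-dependence is then estimated along a flag of general linear sections, losing one factor at each step.

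\textbf{Main obstacle.} The hard part will be making the exponents add up exactly to $2^n-n-1$ rather than something larger. The naive approach of using the lc threshold and then the Borisov--Alexeev--Borisov-style argument loses too much. What I would try first is a signed discrepancy estimate under projection $X\to\PP^{n}$: it controls how log discrepancies degrade from $\eps$ to $\eps\cdot\eps_{n-1}$ under pushing forward along the finite morphism. Combined with the flag volume estimate, this should yield the sharp doubling recursion $a_n+1=2(a_{n-1}+1)+(n-1)-1$, which is consistent with $a_n=2^n-n-1$.
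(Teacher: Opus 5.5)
\textbf{Upper bound: the missing fibre threshold.} Your upper-bound argument is not yet a proof. The recursion $a_n=2a_{n-1}+(n-1)$ is numerically consistent with $2^n-n-1$, but no step of your sketch produces the square of a lower-dimensional bound; your own ``$\eps^{-?}$'' marks the missing step. The missing idea is an anticanonical threshold on the fibres: if $(F,B)$ is $\eps$-lc with $B\simq-K_F$ and $D\simq-K_F$ is any effective divisor, then $(F,(1-t)B+tD)$ is lc for $t\le c_r\eps^{2^r-1}$. This threshold is what controls the excess volume of a fibration. The correct recursion is $a_n=\max_{1\le b<n}\bigl(a_{n-b}+b\,p_{n-b}\bigr)=a_{n-1}+p_{n-1}$ with $p_r=2^r-1$; the maximum occurs at $b=1$ because $b+1\le 2^b$.

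Your claim that the volume ``splits as $\vol(-K_F)$ times a degree on the base'' is false. For instance, $\mathbb F_m$ has fibre and base $\PP^1$, yet $\vol(-K)=(m+2)^2/m$. What is true is a flag estimate $\vol(-K_X)\le c\,(H^b+2)\,M\mu^{-b}$, where $c$ depends only on $n$, $M$ bounds fibre volumes and $\mu$ is the fibre threshold. The proof runs as follows: subtract $q>(b+1)/\mu$ times a pulled-back hyperplane; use $\mu$ to keep the interpolated boundary klt over the generic point of the base; isolate a whole fibre with small-coefficient hyperplanes through a general point; add one further hyperplane with coefficient one; and contradict connectedness.

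Establishing the threshold is the bulk of the work. It needs signed projection estimates on $\PP^j$ with $p_j=1+2p_{j-1}$, bounded-degree finite maps from \emph{terminal} Fano fibres, and the canonical bundle formula on the base. Your ``projection $X\to\PP^n$'' cannot play this role: for an arbitrary $\eps$-lc $X$ no degree bound independent of $\eps$ is available, and the degree enters every constant.

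\textbf{The base and the sharpness family.} The flag estimate gives the right exponent only if $H^b$ is bounded independently of $\eps$. A generalized $\delta$-lc base with $\delta$ polynomial in $\eps$, handled inductively as you suggest, would feed an $\eps$-dependent degree into the bound and raise the exponent. A finite morphism to projective space is not the mechanism here either. The paper instead proceeds as follows:
\begin{enumerate}
\item it terminalizes, so that the total space of the Mori fibre space is terminal and $\eps$ is carried only by a crepant boundary;
\item it applies Birkar's base-singularity theorem with a fixed parameter;
\item it runs MMPs on the base, lifted to the total space;
\item it ends with BAB at a fixed parameter.
\end{enumerate}

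Your sharpness family is also not specified. With fixed Sylvester weights, $\eps$ does not tend to zero. If, as you suggest, only the last weight is scaled, then $\PP(1,w_1,\ldots,w_{n-1},m)$ has $\eps\le C/m$ and $\vol(-K)$ of order $m^{n-1}$. That gives exponent at most $n-1<2^n-n-1$ for $n\ge3$. The parameter must instead seed the recursion: take $u_1=h$, $u_{j+1}=u_j(1+u_j)$, $q_j=1+u_j$, $L=\prod_j q_j$ and $X=\PP(1,L/q_1,\ldots,L/q_n)$. This $X$ is $1/h$-lc, because $(X,(1-1/h)S)$ is klt and $h\bigl(K_X+(1-1/h)S\bigr)$ is Cartier. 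Moreover $\vol(-K_X)\sim h^{2^n-n-1}$.
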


The exponent is sharp; the leading constant is not optimized. The proof also applies to effective real log Fano boundaries after changing $C_n$ by a factor depending only on $n$; see Remark~\ref{rem:real}. In dimension one we can take $C_1=2$. The first exponents are
\[
 a_1=0,\quad a_2=1,\quad a_3=4,\quad a_4=11,
 \quad a_5=26,\quad a_6=57.
\]
Consequently the fourfold statement is an immediate special case:
\begin{corollary}\label{cor:fourfold}
There is an absolute constant $C_4>0$ such that every $\eps$-lc weak Fano fourfold satisfies $\vol(-K_X)\le C_4\eps^{-11}$. The exponent eleven is optimal already for Fano fourfolds of Picard number one.
\end{corollary}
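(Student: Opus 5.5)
The corollary follows at once from Theorem~\ref{thm:main} with $n=4$, and the plan is simply to check the two directions against the definitions. For the upper bound, let $X$ be an $\eps$-lc weak Fano fourfold, so $(X,0)$ is $\eps$-lc and $-K_X$ is nef and big. The theorem is stated for varieties of $\eps$-Fano type, where the boundary makes $-(K_X+\Delta)$ ample, so we first produce such a boundary. Since $-K_X$ is big and nef, write $-K_X\simq A+E$ with $A$ ample and $E\ge0$. Then for $0<t\ll1$ the pair $(X,tE)$ remains close to $\eps$-lc, and $-(K_X+tE)\simq (1-t)(-K_X)+tA$ is ample.

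The discrepancy loss is linear in $t$, so $(X,tE)$ is $\eps'$-lc for any fixed $\eps'<\eps$ once $t$ is small. Taking $\eps'=\eps/2$, the theorem gives
\[
\vol(-K_X)\le C_4(\eps/2)^{-11}=2^{11}C_4\,\eps^{-11},
\]
with $a_4=2^4-4-1=11$. Replacing $C_4$ by $2^{11}C_4$ yields the claimed absolute constant. This is exactly the step the theorem's ``in particular'' clause already covers, so one may also cite that clause directly.

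For optimality, apply the second part of Theorem~\ref{thm:main} with $n=4$. It supplies $\Q$-factorial toric Fano fourfolds of Picard number one, namely fake weighted projective spaces, that are $\eps$-lc with $\vol(-K_X)\ge c\,\eps^{-11}$ along a sequence $\eps\to0$. Such varieties are in particular weak Fano. Hence no exponent smaller than $11$ can work, even when restricted to Fano fourfolds of Picard number one.

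The only mildly delicate point is the passage from weak Fano to $\eps$-Fano type without losing more than a constant factor in $\eps$. The perturbation argument above handles it.
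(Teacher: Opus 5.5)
Your proposal is correct. Its overall structure is the paper's: the corollary is the case $n=4$ of Theorem~\ref{thm:main}, with $a_4=2^4-4-1=11$. Optimality comes from the genuine weighted projective spaces $X_{4,h}=\PP(1,a_1,\ldots,a_4)$ of Proposition~\ref{prop:examples}, which are $1/h$-lc Fano of Picard number one with $\vol(-K_{X_{4,h}})\sim h^{11}$. That proposition gives the asymptotic lower bound you state, which is stronger than the bare non-improvability clause of the theorem.

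The one place you diverge is the reduction from weak Fano to $\eps$-Fano type. The paper passes to the anticanonical model. The base point free theorem makes $-K_X$ semiample, and the resulting birational morphism $X\to Y$ is crepant. Hence $Y$ is an $\eps$-lc Fano variety with the same anticanonical volume, and nothing is lost in the parameter.

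Your Kodaira-lemma perturbation $\Delta=tE$ avoids the base point free theorem and is also valid, but one phrase needs repair. The statement ``the discrepancy loss is linear in $t$'' does not by itself give a bound uniform over all valuations, since $v(E)$ is unbounded. The correct justification is to take a log resolution of $(X,E)$. Its finitely many crepant coefficients are $1-A(F_i,X,0)+t\,\ord_{F_i}(E)\le 1-\eps+t\,\ord_{F_i}(E)$ on exceptional divisors, and small multiples of the coefficients of $E$ on strict transforms. Lemma~\ref{lem:snc} then applies.

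Finally, this works for every $\eps'<\eps$, and $\vol(-K_X)$ does not depend on $t$. Letting $\eps'\uparrow\eps$ therefore yields $\vol(-K_X)\le C_4\eps^{-11}$ with no extra factor $2^{11}$.
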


To the best of our knowledge, Theorem~\ref{thm:main} gives the first polynomial upper bound with the optimal exponent for arbitrary-dimensional $\eps$-lc Fano varieties, and in particular the first such bound in dimension four. The low-dimensional exponents are consistent with the earlier results above. Sharpness comes from Ambro's family \cite[Example~6.3]{Amb16}; we give a direct weighted-projective description and a discrepancy calculation in Section~\ref{sec:examples}.

The principal threshold statement is the following.
\begin{theorem}\label{thm:interp}
For every positive integer $n$ there exists $c_n>0$, depending only on $n$, with the following property. Let $X$ be a projective $\Q$-Gorenstein variety of Fano type, and let $B,D\ge0$ be $\Q$-divisors such that
\[
 B\simq D\simq-K_X,\qquad (X,B)\text{ is }\eps\text{-lc}.
\]
Then
\begin{equation}\label{eq:introinterp}
 \bigl(X,(1-t)B+tD\bigr)\text{ is lc whenever }
 0\le t\le c_n\eps^{p_n},\qquad p_n=2^n-1.
\end{equation}
For $n\ge2$ the exponent $p_n$ cannot be replaced by a smaller exponent uniformly in this class.
\end{theorem}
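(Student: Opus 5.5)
We prove Theorem~\ref{thm:interp} by induction on $n=\dim X$, following the route indicated in the abstract: signed discrepancy estimates under projection, finite morphisms to projective space, and the canonical bundle formula. The recursion to aim for is $p_n=2p_{n-1}+1$ with $p_1=1$, which gives $p_n=2^n-1$.

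\textbf{Reduction to a single divisor.} Set $\Delta_t=(1-t)B+tD$. Suppose $(X,\Delta_t)$ fails to be lc. Then there is a prime divisor $E$ over $X$ with
\[
a(E,X,\Delta_t)=a(E,X,B)-t\bigl(\ord_E D-\ord_E B\bigr)<0 .
\]
Since $a(E,X,B)\ge\eps$, it suffices to prove
\[
\ord_E D\le c_n^{-1}\eps^{-(2^n-1)}\,a(E,X,B)
\]
for every such $E$. This is a signed discrepancy estimate, relative to the $\eps$-lc boundary $B$.

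\textbf{Dimension one.} Here $X=\PP^1$ and $\deg D=2$. Multiplicities of $D$ are at most $2$, and $a(P,X,B)\ge\eps$. This gives $p_1=1$.

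\textbf{Inductive step.} First, passing to a small $\Q$-factorialisation and running an MMP on $-K_X$, we may assume $X$ is a Mori fibre space $f\colon X\to Z$, or a Fano of Picard number one. There are two cases.

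\emph{Case $\dim Z>0$.} Consider the centre $c_X(E)$.
\begin{itemize}
\item If $c_X(E)$ dominates $Z$, restrict to a general fibre $F$, which has dimension less than $n$. Then $(F,B|_F)$ is $\eps$-lc, $D|_F\simq-K_F$, and induction applies directly.
\item Otherwise, apply the canonical bundle formula to $(X,\Delta_t)\to Z$. The discriminant coefficients of $B$ are controlled by lc thresholds of fibres. Using the $\eps$-lc property of $B$ and the fibre-dimension induction, we get that $(Z,B_Z+M_Z)$ is $\eps'$-lc with $\eps'\gtrsim\eps^{p_{\dim F}+1}$, while $D_Z$ stays in the same class. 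Applying induction on $Z$ then yields an exponent of the form $p_{\dim F}+(p_{\dim F}+1)p_{\dim Z}$. This is at most $2^n-1$, since
\[
(2^a-1)+2^a(2^b-1)=2^{a+b}-1 .
\]
\end{itemize}

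\emph{Case of Picard number one.} We do not use the BAB-type boundedness of $X$ directly, because its constants are not polynomial in $\eps$. Instead, take a general finite morphism $\pi\colon X\to\PP^{n}$, or a projection $X\dashrightarrow\PP^{n-1}$ from a general point, and compare discrepancies:
\begin{itemize}
\item $\ord_E D$ is controlled by the multiplicity of $D$ along $c_X(E)$;
\item this multiplicity is in turn controlled by degree of $D$ against curves through the centre, namely $(-K_X)^{n-1}$-type intersection numbers;
\item these are bounded via an $\eps$-dependent volume estimate in lower dimension, namely the volume estimate of Theorem~\ref{thm:main}, applied inductively to a general hyperplane section or to the image of the projection.
\end{itemize}
The doubling in the exponent comes from this step. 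One factor $\eps^{-p_{n-1}}$ comes from lower-dimensional interpolation along the projection. A second factor $\eps^{-p_{n-1}-1}$ comes from the signed discrepancy change $a(E,X,B)$ versus $a$ on the image. Together they give $2p_{n-1}+1$.

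\textbf{Main obstacle.} The hardest step is the signed discrepancy estimate under projection. Pushing $B$ forward can destroy the $\eps$-lc property, and the discrepancy of $E$ may drop by an amount depending on the ramification of $\pi$. My first attempt would be to choose $\pi$ by a linear system $|{-mK_X}|$ with $m$ bounded independently of $\eps$, which is possible after the reduction to a base bounded independently of $\eps$. I would then bound the ramification contribution by $\ord_E$ of the ramification divisor, which is linear in $m$. If this fails, I would instead use an adjunction-to-a-general-member argument to lose exactly one extra factor of $\eps$.

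\textbf{Sharpness.} This follows from the weighted projective spaces of Section~\ref{sec:examples}. For suitable $B,D$ one computes $a(E,X,B)\approx\eps$ and $\ord_E D\approx\eps^{-(2^n-2)}$.
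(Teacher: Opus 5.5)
\textbf{Verdict: there is a genuine gap.} Your fibration case has the right skeleton: the paper also tests vertical valuations through the canonical bundle formula, and it gets exactly your count $p_r+2^rp_s=2^{r+s}-1$. But both sources of the exponents are missing, and the reduction you propose cannot supply them.

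\textbf{Missing step: terminalization.} The paper replaces $X$ by a $\Q$-factorial terminalization $Y\to X$ with $K_Y+E=\pi^*K_X$ and $E\ge0$ (Lemma~\ref{lem:terminalization}). The crepant transforms of $B$ and $D$ then stay effective and anticanonical. It then runs a $K_Y$-MMP to a \emph{terminal} Mori fibre space $W\to Z$; a $-K_X$-MMP does not end in a Mori fibre space. After this, $\eps$ is carried only by the boundary. Meanwhile $W$ (if $Z$ is a point) or the geometric generic fibre is terminal Fano, hence bounded by BAB with fixed parameter $1$, and admits a finite morphism to projective space of degree at most $N_r$, independent of $\eps$ (Lemma~\ref{lem:projections}).

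\textbf{Why your Picard-number-one case fails without it.} There is then no map of degree bounded independently of $\eps$. For the examples of Section~\ref{sec:examples}, $\pi^*\OO(1)$ is a Cartier multiple $cS$, which forces $L\mid c$. So every finite morphism to $\PP^n$ has degree at least $L=U/h$, of order $\eps^{-(2^n-1)}$. The reduction to a base of bounded degree belongs to the volume proof for fibrations and says nothing when the base is a point. Since transferred boundaries are only sub-$(\eps/N)$-lc of degree up to $(n+1)N$, an $\eps$-dependent $N$ ruins the exponent.

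\textbf{The real content of the point-base case.} Your chain ``$\ord_E D$ is controlled by multiplicity, which is controlled by degree'' only reproduces Lemma~\ref{lem:degree}, namely $v(T)\le(\deg T)A(v,\PP^n,0)$. The substance is comparing $A(v,\PP^n,0)$ with $A(v,\PP^n,\Theta)$ for a signed sub-$\eta$-lc $\Theta$ of bounded degree, which is Theorem~\ref{thm:absolute}. There the recursion $p_j=1+2p_{j-1}$ arises as follows.
\begin{enumerate}
\item Projecting from a point, the surface estimate (Proposition~\ref{prop:surface}) gives a base threshold $a_\Theta\ge\eta^2/(6k)$.
\item The coefficient ideals and Lemma~\ref{lem:realize} realize this threshold by a single signed divisor of degree at most $k^2$, to which the $(j-1)$-dimensional comparison applies.
\item The inequality \eqref{eq:comparison} costs one further factor $\eta^{-1}$.
\end{enumerate}
The transfer from $W$ to $\PP^n$ uses characteristic coefficients, not the norm (Proposition~\ref{prop:transfer}). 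Your account of the doubling is fitted to $2p_{n-1}+1$ rather than derived.

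\textbf{Gap in the fibration case.} Your claim that the base generalized pair is $\eps'$-lc with $\eps'\gtrsim\eps^{p_r+1}$ has no supporting argument. Discriminant coefficients, and generalized discrepancies of divisors exceptional over $Z$, are governed by vertical valuations, which interpolation on a general fibre does not see. The paper obtains $\delta=d_r\eps^{2^r}$ from the relative signed estimate over a geometric DVR (Theorem~\ref{thm:relative}). This is transferred through a bounded-degree projection of the terminal generic fibre to $\PP^r_K$ (Proposition~\ref{prop:discriminant}), which again requires terminalization.

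\textbf{Smaller points.}
\begin{enumerate}
\item The canonical bundle formula cannot be applied to $D$, which need not be klt on the generic fibre. One uses $C_t=(1-t)B+tD$ with $t\sim\eps^{p_r}$ and interpolates on the base between the two generalized pairs.
\item The inductive hypothesis on $Z$ must first be extended to generalized pairs with b-nef moduli parts (Lemma~\ref{lem:moduli} and Corollary~\ref{cor:generalized}).
\end{enumerate}
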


The theorem compares an arbitrary effective anticanonical divisor with a specified anticanonical boundary of controlled singularities. It controls all divisorial valuations, including those contracted on subsequent models. This is needed when applying the canonical bundle formula and when moving between Mori fibre spaces.

\subsection{Outline of the proof}
Our proof has two dimension inductions. The first establishes Theorem~\ref{thm:interp}. The second estimates volumes from that interpolation theorem. Keeping them separate is essential for the exponent in \eqref{eq:main}.

The local input builds on Jiao's projection method in \cite{Jiao}. For a signed sub-pair on projective space, the loss of discrepancy under projection is expressed by finitely many quotients of coefficient ideals. The normalized generating degree of the positive ideal is bounded by the square of the positive boundary degree, independently of the negative degree. We use this construction over geometric discrete valuation rings and retain all vertical fixed divisors. A surface calculation then gives a relative discrepancy bound of order $\eps^{2^r}$ in relative dimension $r$.

For the absolute discrepancy comparison we retain the quotient by the restriction index throughout the dimension induction. This gives the recurrence
\[
 p_j=1+2p_{j-1},\qquad p_1=1,
\]
and hence $p_j=2^j-1$. This comparison is a refinement of the argument of \cite[Proposition~5.2]{Jiao}; it is not the all-valuations exponent stated in that paper. The signed realization of one selected ideal expression is what permits the refinement.

To apply projective-space estimates to terminal Fano varieties, we use a finite morphism of bounded degree. Characteristic polynomials recover the largest order among all valuation extensions, and therefore the smallest discrepancy quotient. The norm alone would only recover their sum. Fixed-parameter BAB supplies the finite morphisms, so its constants do not depend on the varying singularity parameter $\eps$.

Terminalization and the $K_X$-MMP reduce interpolation to terminal Fano fibres and generalized pairs on the base. The moduli parts are b-nef. We approximate them at a prescribed valuation by ordinary effective anticanonical divisors, with a uniform discrepancy bound for the good pair. This uses no general semiampleness statement for moduli b-divisors. If the fibre and base dimensions are $r$ and $s$, respectively, the resulting exponent is
\[
 p_r+2^rp_s=2^{r+s}-1.
\]

For volumes, a further reduction produces a polarized base whose degree is bounded independently of $\eps$. We use Birkar's theorem on the singularities of Fano fibrations \cite{BirFib}, together with the lifting result of Mauri--Moraga \cite[Lemma~2.11]{MM}. A flag on this base and connectedness give a volume bound proportional to
\[
 M_r(\eps/4)\,\mu_r(\eps/4)^{-(n-r)},
\]
where $M_r$ bounds fibre volumes and $\mu_r$ is the interpolation threshold. The largest exponent occurs for a one-dimensional base:
\[
 a_n=a_{n-1}+p_{n-1}=2^n-n-1.
\]

Section~\ref{sec:prelim} fixes the conventions and the birational inputs. Sections~\ref{sec:local} and~\ref{sec:projection} develop the signed local estimates. Section~\ref{sec:finite} passes to terminal Fano fibres, and Section~\ref{sec:interpolation} proves Theorem~\ref{thm:interp}. Sections~\ref{sec:base} and~\ref{sec:volume} prove the volume bound. The last section treats sharpness and further questions. Every reference to a numbered result of \cite{Jiao} refers to arXiv:2610.05638.

\subsection*{Acknowledgements}
The authors are grateful to their adviser Meng Chen for his great encouragement and support. They are also grateful for their fellow students for their helpful discussion and friendship.

\subsection*{AI disclosure}
The authors used ChatGPT as research-assistance tool during the preparation of this manuscript. The first author started working on this problem a year ago with some progress made along the way. Recently, the authors found some breakthrough using AI tools. In particular the threshold formula in Section 3 is suggested by ChatGPT, which is very helpful. The authors take full responsibility for the accuracy and content of the paper.

\section{Preliminaries}\label{sec:prelim}

\subsection{Conventions}
We use the standard conventions of \cite{KM98}. A contraction is a projective surjective morphism of normal varieties with connected fibres. A birational contraction is a birational map whose inverse does not contract a prime divisor. Canonical divisors on birational models are chosen compatibly.

A sub-pair $(X,B)$ consists of a normal variety and a $\Q$-divisor for which $K_X+B$ is $\Q$-Cartier. The divisor $B$ may have negative coefficients. We write $B=B^+-B^-$ for its positive and negative parts, which have no common component. We occasionally say \emph{signed divisor} to emphasize that negative coefficients are permitted. For a prime divisor $E$ on a normal birational model $g:Y\to X$, its log discrepancy is
\[
 A(E,X,B)=1-\coef_E B_Y,
 \qquad K_Y+B_Y=g^*(K_X+B).
\]
The sub-pair is sub-$\eta$-lc if $A(E,X,B)\ge\eta$ for every prime divisor over $X$. We omit ''sub'' when $B\ge0$. Interpolated boundaries may have real coefficients; discrepancies extend linearly to them.

For a $\Q$-Cartier divisor $L$ on a projective $d$-fold, we use
\[
 \vol_X(L)=\limsup_{m\to\infty}
 \frac{h^0(X,\OO_X(mL))}{m^d/d!},
\]
where $m$ is sufficiently divisible. The limsup is a limit on such multiples. A divisor is big if and only if its volume is positive, and $\vol(L)=L^d$ when $L$ is nef; see \cite[Section~2.2]{Laz04}. Intermediate anticanonical divisors in this paper are not necessarily nef, so we use volumes rather than top self-intersections.

A variety is of Fano type if it admits an effective $\Q$-boundary $\Delta$ with $(X,\Delta)$ klt and $-(K_X+\Delta)$ ample. A $\Q$-factorial variety of Fano type is a Mori dream space by \cite[Corollary~1.3.2]{BCHM}. The needed MMPs terminate. Fano type is preserved under contractions and the birational contractions considered here; see \cite[Theorem~1.1 and Corollary~1.3]{GOST} and \cite[Section~2]{MM}. We always impose the additional $\Q$-Gorenstein hypothesis when discussing the volume of $-K_X$ on a non-$\Q$-factorial variety.

\begin{lemma}\label{lem:snc}
Let $Y$ be smooth and let $C$ be a signed divisor with simple normal crossing support. If every coefficient of $C$ is at most $1-\gamma$, where $0<\gamma\le1$, then $(Y,C)$ is sub-$\gamma$-lc.
\end{lemma}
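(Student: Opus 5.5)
The plan is to reduce to the standard computation of log discrepancies for blow-ups of strata of a simple normal crossing divisor, and then induct on the number of blow-ups. Write $C=\sum_i c_iC_i$ with $c_i\le 1-\gamma$ and the $C_i$ smooth, meeting transversally. Every divisorial valuation $E$ over $Y$ has a centre $Z=c_Y(E)$. Taking an open neighbourhood of the generic point of $Z$, we may assume $C$ has only components through $Z$. We may also assume we are in local coordinates where $C_1,\dots,C_k$ are coordinate hyperplanes.

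First handle the divisors on $Y$ itself. For $E=C_i$ we get $A=1-c_i\ge\gamma$, and for any other prime divisor $A=1\ge\gamma$. Next treat an exceptional divisor $E$. By the standard argument (\cite[Lemma~2.45, Corollary~2.31]{KM98}), $E$ is reached by a finite sequence of blow-ups of smooth centres, namely the centres of $E$ on successive models. One can instead use a log resolution argument directly: by Zariski's lemma, $E$ is obtained by repeatedly blowing up its centre.

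The key computation is the blow-up $\pi:Y'\to Y$ along a smooth centre $Z$ of codimension $m\ge2$ contained in exactly the components $C_1,\dots,C_k$, with $Z$ an irreducible component of a stratum or transversal to them. Choose $Z$ so that $C$ stays snc. In the crude case one only needs $Z$ to have snc with $\Supp C$. The exceptional divisor $F$ then has
\[
A(F,Y,C)=m-\sum_{i\le k}c_i .
\]
If some $c_i$ is negative, or if $k<m$, this is at least $1\ge\gamma$, except possibly in the case $k=m$ with all $c_i$ positive. In that case, since $m\ge2$ and each $c_i\le 1-\gamma$, we get
\[
m-\sum c_i\ge m-m(1-\gamma)=m\gamma\ge\gamma .
\]
More carefully, $A(F)=m-\sum c_i\ge \min(1,m\gamma)\ge\gamma$ in all cases, because $\sum c_i\le k(1-\gamma)\le m-1+\ldots$. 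The point is simply that $m-\sum_{i\le k}c_i\ge m-k(1-\gamma)\ge m-m+m\gamma\ge\gamma$. The new boundary $C'=\pi^*$-crepant pullback therefore has coefficient $1-A(F)\le 1-\gamma$ along $F$, while the other coefficients are unchanged. Moreover $(Y',C')$ is again snc with all coefficients $\le1-\gamma$.

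The induction then runs as follows. Each step produces a smooth model with an snc signed divisor whose coefficients are all $\le 1-\gamma$. Hence the divisor $E$, appearing at the last step, has log discrepancy $\ge\gamma$. The one subtlety, and the main obstacle, is that the centre of $E$ need not be a stratum of $\Supp C$. It may only be an arbitrary smooth subvariety, so the snc condition could fail after blow-up.

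To avoid this, I would instead cite the toric computation directly. Near the generic point of $c_Y(E)$, take coordinates $x_1,\dots,x_n$ with $C=\sum_{i\le k}c_i\{x_i=0\}$. Then $A(E,Y,C)\ge A(E,Y,0)-\sum c_i\ord_E(x_i)$. Comparing with the monomial valuation with weights $w_i=\ord_E(x_i)$, one has $A(E,Y,0)\ge\sum_i w_i$ (\cite[Lemma~2.29]{KM98}-type estimate). This gives
\[
A(E)\ge\sum_i w_i(1-c_i^+)\ge\gamma\sum w_i .
\]
This is $\ge\gamma$ whenever some $w_i\ge1$ with $c_i>0$. Otherwise it is $\ge A(E,Y,0)\ge1$, which finishes the proof.
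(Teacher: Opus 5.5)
Your final paragraph is the paper's proof, and it is correct. You extend the boundary equations to regular parameters at the generic point of the centre, use $A(E,Y,0)\ge\sum_i\ord_E(x_i)$, and then use $1-c_i^+\ge\gamma$; if the $x_i$ form a regular system of parameters of the local ring there, every $w_i\ge1$, so the final case distinction is unnecessary. The blow-up detour can be dropped; the snc obstacle you raise there also disappears once you localize at the generic point of each successive centre, since blowing up the closed point of a regular local scheme preserves simple normal crossings.
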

\begin{proof}
At the generic point of the centre of a divisorial valuation $v$, extend local equations for the boundary to regular parameters. The simple normal crossing discrepancy inequality gives
\[
 A(v,Y,C)\ge\sum_i(1-c_i)v(x_i),
\]
 At least one parameter has positive integral value. Every weight is at least $\gamma$, proving the assertion. In particular, negative coefficients cause no difficulty.
\end{proof}

\begin{lemma}[Standard results]\label{lem:book}
The following statements hold.
\begin{enumerate}
\item If $(X,\Delta)$ is an $\eta$-lc log Fano pair, there is $B\ge\Delta$ with $B\simq-K_X$ such that $(X,B)$ is $\eta/2$-lc. If $\eta<1$, one can retain $\eta$ instead of $\eta/2$.
\item Suppose $X\dashrightarrow X'$ is a birational contraction, $K_X+B\simq0$, and $K_{X'}+B'$ is $\Q$-Cartier, where $B'$ is the pushforward. Then the two sub-pairs are crepant.
\item If $X$ and $X'$ are $\Q$-Gorenstein and $X\dashrightarrow X'$ is a birational contraction, then $\vol(-K_X)\le\vol(-K_{X'})$. Equality holds for a small modification.
\end{enumerate}
\end{lemma}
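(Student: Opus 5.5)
Each of the three items reduces to a standard argument, and I would treat them separately.

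For (1), let $(X,\Delta)$ be $\eta$-lc with $A:=-(K_X+\Delta)$ ample. I would choose a sufficiently divisible integer $m$ with $mA$ very ample, and a general member $H\in|mA|$. Put $B=\Delta+\frac1m H$, so $B\ge\Delta$ and $B\simq-K_X$. Take a log resolution $g:Y\to X$ of $(X,\Delta)$. Since $H$ is general, $g^*H$ is the strict transform, it is smooth, and it meets the exceptional locus and the support of $\Delta_Y$ transversally (Bertini). For a prime divisor $E$ over $X$ whose centre on $Y$ is not contained in $g^*H$, the log discrepancy is unchanged, so it is at least $\eta$. If the centre lies in $g^*H$, I would use the snc inequality from the proof of Lemma~\ref{lem:snc}. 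The coefficients of $\Delta_Y$ are at most $1-\eta$, the coefficient of $H$ is $\frac1m$, and every weight is at least $\min(\eta,1-\frac1m)$. Choosing $m\ge2$ gives $\eta/2$-lc in all cases, since $\eta\le1$, so $1-\frac1m\ge\frac12\ge\eta/2$. If $\eta<1$, choosing $m$ with $1-\frac1m\ge\eta$ keeps the pair $\eta$-lc. The only care needed is that the snc estimate is applied on $Y$ to $\Delta_Y+\frac1m g^*H$, which has snc support, with the total coefficient along $g^*H$ equal to $\frac1m$.

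For (2), take a common resolution $p:W\to X$, $q:W\to X'$. Write $p^*(K_X+B)=K_W+B_W$; this is $\simq0$. Then $K_W+B_W-q^*(K_{X'}+B')$ is $q$-exceptional: its pushforward to $X'$ is $K_{X'}+B'-(K_{X'}+B')=0$, because $X\dashrightarrow X'$ contracts no divisor of $X'$ and $B'$ is the pushforward of $B$. This difference is also $\Q$-linearly trivial over $X'$, hence $q$-numerically trivial. The negativity lemma applied to it and to its negative shows that it is zero. Hence the two sub-pairs have the same log discrepancies, i.e.\ they are crepant.

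For (3), I would again use a common resolution and write $p^*(-K_X)=-K_W-F$ and $q^*(-K_{X'})=-K_W-F'$. The step I expect to be the main obstacle is showing $F\le F'$ without a boundary; here I would argue at the level of sections. Choose $m$ divisible enough that $mK_X$ and $mK_{X'}$ are Cartier. A section of $\OO_X(-mK_X)$ is a rational $m$-pluri-anticanonical form on $X$ with no poles along the divisors of $X$. Its transform to $X'$ has no poles along any divisor of $X'$, because every prime divisor of $X'$ is the birational transform of a divisor of $X$ (the map is a birational contraction). Since $X'$ is normal, it is therefore a section of $\OO_{X'}(-mK_{X'})$. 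This gives an injection $H^0(X,-mK_X)\hookrightarrow H^0(X',-mK_{X'})$, and taking limits gives the inequality of volumes. For a small modification the map is an isomorphism in codimension one, so the injection is a bijection and equality holds.
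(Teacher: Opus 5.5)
Your proposal is correct and follows the paper's proof in all three parts: a general member of $|mA|$ with coefficient $1/m$ plus the snc estimate of Lemma~\ref{lem:snc} for (1), the negativity lemma applied on a common resolution for (2), and the injection of pluri-anticanonical sections by pushing forward effective divisors for (3). Your aside about needing $F\le F'$ is neither used nor true in general. For the blowup of a smooth point on an $n$-fold $X'$, with $W=X$, one has $F=0$ and $F'=-(n-1)E$, so the section-level argument you actually give is the right one.
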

\begin{proof}
These are standard facts in birational geometry that we shall use freely. Let $A=-(K_X+\Delta)$. For sufficiently large and divisible $m$, the system $|mA|$ is free. On a log resolution its pullback is free and has no fixed exceptional component. Choose a general member $G$ transverse to the resolved boundary and put $B=\Delta+G/m$. The old crepant coefficients are unchanged, and the new coefficient is $1/m$. Choose $1/m\le1-\eta/2$ and apply Lemma~\ref{lem:snc}. If $\eta<1$, choose $1/m\le1-\eta$ instead.

For (2), use the Negativity lemma to $p^*(K_X+B)-q^*(K_{X'}+B')$ on the common resolution $p:W\to X$ and $q:W\to X'$.

For (3), a rational section of $-mK_X$ defines an effective divisor $\Div(h)-mK_X$. Its pushforward is effective on $X'$, so the same rational function gives a section of $-mK_{X'}$. This produces an injection of spaces of sections for divisible $m$. For a small map the codimension-one conditions in the two directions are identical. Taking volumes proves the claims.
\end{proof}

\begin{lemma}[Terminalization]\label{lem:terminalization}
Let $X$ be a projective $\Q$-Gorenstein variety of Fano type. There is a projective birational morphism $\pi:Y\to X$ with $Y$ $\Q$-factorial terminal and
\begin{equation}\label{eq:terminalization}
 K_Y+E=\pi^*K_X,\qquad E\ge0.
\end{equation}
The variety $Y$ is of Fano type. If $B,D\ge0$ are anticanonical $\Q$-divisors on $X$, their crepant transforms $E+\pi^*B$ and $E+\pi^*D$ are effective. Moreover, $\vol(-K_X)\le\vol(-K_Y)$.
\end{lemma}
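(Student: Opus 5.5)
The plan is to build $Y$ by extracting exactly the non-terminal divisors over $X$ with the relative MMP of \cite{BCHM}, and then read the remaining assertions off the crepant relation \eqref{eq:terminalization}.

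\emph{Construction of $Y$.} Since $X$ is of Fano type, fix $\Delta\ge0$ with $(X,\Delta)$ klt and $-(K_X+\Delta)$ ample. Because $X$ is $\Q$-Gorenstein, $(X,0)$ is klt, so only finitely many exceptional divisors $E_i$ over $X$ have log discrepancy $A(E_i,X,0)\le1$. Take a log resolution $W\to X$ on which all of them appear. Write $K_W+\Gamma_W=g^*K_X+F$, where $\Gamma_W$ carries the coefficients $1-A(E,X,0)$ on the $E_i$ with $A\le 1$ (these lie in $[0,1)$), and coefficient $1-\delta$ on the remaining exceptional divisors, with $\delta>0$ small. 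Then $F\ge0$ is exceptional and its support is exactly the set of exceptional divisors with $A>1$.

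Run the $(K_W+\Gamma_W)$-MMP over $X$ with scaling. It terminates by \cite{BCHM}, because $(W,\Gamma_W)$ is klt and $\Gamma_W$ can be made big over $X$ by a standard perturbation. By the negativity lemma, the MMP contracts precisely $\Supp F$. The output $Y$ is $\Q$-factorial, and $K_Y+E=\pi^*K_X$, where $E=\pi_*\Gamma_W\ge0$ is supported on the $E_i$.

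\emph{Terminality.} Let $G$ be a divisor exceptional over $Y$. There are two cases.
\begin{itemize}
\item If $G$ is exceptional over $X$ with $A(G,X,0)\le1$, it was extracted on $Y$, so it cannot be exceptional over $Y$.
\item Otherwise $A(G,X,0)>1$, or $G$ is a divisor on $X$ not on $Y$, which is impossible.
\end{itemize}
Hence $A(G,Y,0)\ge A(G,Y,E)=A(G,X,0)>1$, using $E\ge0$. So $Y$ is terminal. The main subtlety here is the bookkeeping: one must check that the divisors with $A=1$ are kept, and that this choice still gives terminal singularities. The strict inequality comes out because every divisor exceptional over $Y$ has $A(\cdot,X,0)>1$, and subtracting $E\ge0$ can only increase log discrepancies.

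\emph{Fano type of $Y$.} Set $\Delta_Y$ by $K_Y+\Delta_Y=\pi^*(K_X+\Delta)$. Then $\Delta_Y=E+\pi^*\Delta$ is effective. It is sub-klt, hence klt, and $-(K_Y+\Delta_Y)$ is nef and big. Perturbing by a small multiple of a $\pi$-exceptional anti-ample divisor (available since $Y\to X$ is projective and $Y$ is $\Q$-factorial), or using \cite{GOST}, shows that $Y$ is of Fano type.

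\emph{Crepant transforms and volumes.} For anticanonical $B\ge0$ on $X$ we have $K_Y+E+\pi^*B=\pi^*(K_X+B)$, and $E+\pi^*B\ge0$ since both summands are effective. The same holds for $D$.

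For the volume, $-K_Y=\pi^*(-K_X)+E$ with $E\ge0$ exceptional. Hence
\[
 h^0(Y,-mK_Y)\ge h^0(Y,\pi^*(-mK_X))=h^0(X,-mK_X)
\]
by normality of $X$. Equivalently, Lemma~\ref{lem:book}(3) applies to the birational contraction $Y\to X$. Either way $\vol(-K_X)\le\vol(-K_Y)$.
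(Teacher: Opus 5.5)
Your route matches the paper's. Your $(K_W+\Gamma_W)$-MMP over $X$ is the standard proof of \cite[Corollary~1.4.3]{BCHM}, which the paper simply cites. Your terminality bookkeeping, the effectivity of $E+\pi^*B$, and the section-counting proof of $\vol(-K_X)\le\vol(-K_Y)$ are correct. No perturbation is needed for bigness, since every divisor is big over $X$ when $g$ is birational. Two steps need repair.

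\emph{Fano type.} Your perturbation needs an effective $\pi$-exceptional $E'$ with $-E'$ $\pi$-ample. Such an $E'$ exists when $\pi_*$ of an ample divisor on $Y$ is $\Q$-Cartier, for instance when $X$ is $\Q$-factorial. The lemma only assumes $X$ is $\Q$-Gorenstein, and $\Q$-factoriality of $Y$ is not the relevant hypothesis.

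Suppose $X$ is terminal but not $\Q$-factorial. Then your construction contracts every exceptional divisor of $W$, so $\pi$ is a small $\Q$-factorialization that is not an isomorphism. There are no $\pi$-exceptional divisors at all. Moreover $-(K_Y+\Delta_Y)=\pi^*(-(K_X+\Delta))$ is numerically trivial on the curves contracted by $\pi$, so no exceptional perturbation can make it ample. An unspecified appeal to \cite{GOST} does not replace this step.

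Use the paper's argument instead. Since $-(K_Y+\Delta_Y)$ is big, Kodaira's lemma gives $-(K_Y+\Delta_Y)\simq H+G$ with $H$ ample and $G\ge0$, and $G$ is $\Q$-Cartier because $Y$ is $\Q$-factorial. For small rational $u>0$ the pair $(Y,\Delta_Y+uG)$ is klt, and $-(K_Y+\Delta_Y+uG)\simq(1-u)\bigl(-(K_Y+\Delta_Y)\bigr)+uH$ is ample.

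\emph{Volume.} Lemma~\ref{lem:book}(3), applied to the birational contraction $Y\to X$, gives $\vol(-K_Y)\le\vol(-K_X)$. That is the reverse inequality, so it is not an equivalent argument. Your direct comparison of sections is the correct proof, and together with (3) it shows that equality actually holds.
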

\begin{proof}
Since the underlying variety $X$ is klt, the terminalization and $\Q$-factorialization statements follow from the extraction results of \cite[Corollary~1.4.3]{BCHM}, extracting only divisors of log discrepancy at most one. This gives \eqref{eq:terminalization}. Pull back a klt log Fano pair on $X$ crepantly. Its boundary on $Y$ is effective, and its negative log canonical divisor is nef and big. If this divisor is denoted by $A$, write $A\simq H+G$ with $H$ ample and $G\ge0$. For sufficiently small rational $u>0$, adding $uG$ preserves klt singularities and changes the negative log canonical divisor to
\[
 A-uG\simq(1-u)A+uH,
\]
which is ample. Thus $Y$ is of Fano type. The effectiveness assertions follow from \eqref{eq:terminalization}. Finally $-K_Y=\pi^*(-K_X)+E$ gives the required inclusion of sections and the volume inequality.
\end{proof}

\subsection{Valuations and geometric discrete valuation rings}
We recall several facts about valuations and geometric DVR. Every divisorial valuation is normalized to have value group $\Z$. If $v$ is rescaled, discrepancy is rescaled by the same positive factor. For an ideal $\ba$, the number $v(\ba)$ is the minimum of the values of its local generators at the centre of $v$. For a Cartier divisor we use the order of a local equation. These definitions do not depend on the choices.

A \emph{geometric discrete valuation ring}, or geometric DVR, is the local ring at a prime divisor on a normal model of a function field over $\C$. It is regular; we may take the model smooth near that prime divisor. Its maximal ideal is generated by a \emph{uniformizer} $u$. Its fraction field is denoted by $K$, and its residue field by $\kappa$. The latter need not be algebraically closed.

\begin{lemma}\label{lem:restriction}
Let $L/K$ be a finitely generated extension of function fields over $\C$, of transcendence degree $r$. If a normalized divisorial valuation $v$ of $L$ restricts nontrivially to $K$, then
\[
 v|_K=e\zeta
\]
for an integer $e\ge1$ and a normalized divisorial valuation $\zeta$ of $K$.
\end{lemma}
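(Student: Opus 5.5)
The plan is to use the standard structure theory of divisorial valuations: a valuation is divisorial precisely when it is discrete of rank one and its residue field has the maximal possible transcendence degree over $\C$, namely one less than that of the function field.

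\textbf{Step 1: the restriction is discrete.} Let $w=v|_K$. This is a nontrivial valuation of $K$ that is trivial on $\C$. Its value group $w(K^\times)$ is a nonzero subgroup of $v(L^\times)=\Z$, hence equal to $e\Z$ for a unique integer $e\ge1$. Put $\zeta=w/e$. Then $\zeta$ is a discrete rank-one valuation of $K$ with value group $\Z$, and $v|_K=e\zeta$ by construction. It remains only to show that $\zeta$ is divisorial.

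\textbf{Step 2: the residue field of $\zeta$ is large enough.} Write $\kappa_v$ and $\kappa_\zeta$ for the residue fields; then $\kappa_\zeta\subseteq\kappa_v$. Since $v$ is divisorial,
\[
 \trdeg_\C\kappa_v=\trdeg_\C L-1=\trdeg_\C K+r-1.
\]
Abhyankar's inequality for the extension $L/K$ gives
\[
 \operatorname{rat.rk}\bigl(v(L^\times)/w(K^\times)\bigr)+\trdeg_{\kappa_\zeta}\kappa_v\le r.
\]
The first term vanishes because both groups have rank one. Therefore
\[
 \trdeg_\C\kappa_\zeta\ge\trdeg_\C\kappa_v-r=\trdeg_\C K-1.
\]
Abhyankar's inequality applied to $\zeta$ on $K/\C$ gives the reverse inequality, so equality holds.

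\textbf{Step 3: conclude that $\zeta$ is divisorial.} We invoke the classical characterization, due to Zariski (see \cite[Lemma~2.45]{KM98}): a discrete valuation of a function field over $\C$ whose residue field has transcendence degree $\trdeg K-1$ is divisorial. In concrete terms, choose a projective model of $K$ on which $\zeta$ has a centre. Then blow up the centre repeatedly, normalizing each time. The transcendence degree condition guarantees that this process terminates after finitely many steps with a centre of codimension one. On that model $\zeta$ is $\ord_E$ for a prime divisor $E$. Since $\zeta$ takes the value $1$, it is the normalized order along $E$, which completes the argument.

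I expect the main obstacle to be the bookkeeping in Step 2. One has to apply Abhyankar's inequality to the relative extension $L/K$, which is not algebraic, and this requires the version of the inequality for finitely generated extensions. Everything else is formal once the criterion of Step 3 is cited.
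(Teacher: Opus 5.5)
Your proof is correct and follows the same route as the paper: identify $v(K^\times)=e\Z$, establish $\trdeg_\C\kappa(\zeta)=\trdeg_\C K-1$ by two opposite inequalities, and conclude with the divisorial (Zariski) criterion. The only difference is that you cite Abhyankar's inequality for $L/K$ and for $K/\C$, whereas the paper proves exactly these two special cases by hand: it lifts algebraically independent residue classes, and it adjoins an element of positive value to lifts of independent residues.
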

\begin{proof}
The nonzero subgroup $v(K^*)\subseteq\Z$ equals $e\Z$; dividing by $e$ gives a discrete valuation $\zeta$. Residue classes algebraically independent over $\kappa(\zeta)$ lift to elements algebraically independent over $K$: in any proposed polynomial relation, divide the coefficients by one of least value and reduce modulo the valuation ideal. Hence
\[
 \trdeg\bigl(\kappa(v)/\kappa(\zeta)\bigr)\le r.
\]
Since $v$ is divisorial, this implies $\trdeg_\C\kappa(\zeta)\ge\trdeg_\C K-1$. Conversely, an element of positive value and lifts of an algebraically independent set in the residue field are algebraically independent over $\C$, by comparing the least powers of that element. This proves the opposite inequality. The discrete valuation $\zeta$ therefore has residue transcendence degree $\trdeg_\C K-1$, which is the divisorial criterion.
\end{proof}

For a finite extension of degree $N$, the restriction index $e$ is at most $N$. When passing to a splitting field we extend the original valuation without normalizing it again. Thus, if a splitting DVR has uniformizer $s$ and $\ord_s(u)=h$, its valuation divided by $h$ assigns value one to $u$.

We will repeatedly localize a morphism at a geometric DVR of the base. The boundary on the resulting model is always the crepant boundary, including the canonical correction from the base. Concretely, wedge a relative top form with differentials of a separating transcendence basis of the residue field. This identifies the discrepancy calculation on the localized scheme with the discrepancy on the original variety. Dropping a vertical coefficient during this localization would change that discrepancy.

\begin{lemma}[Finite ramification formula]\label{lem:ramification}
Let $\pi:T\to Y$ be a finite surjective morphism of normal varieties in characteristic zero, with $Y$ smooth. Let $(T,B)$ be a sub-pair and set
\[
 H=K_T-\pi^*K_Y+B.
\]
If $v|_{K(Y)}=e\zeta$ for normalized divisorial valuations, then
\begin{equation}\label{eq:finite-ram}
 A(v,T,B)=eA(\zeta,Y,0)-v(H).
\end{equation}
The formula also holds on the localized schemes used later.
\end{lemma}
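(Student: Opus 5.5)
We will prove \eqref{eq:finite-ram} by the standard Hurwitz formula for finite morphisms in characteristic zero, applied on a common birational model. The point is that both sides are computed by pulling back the canonical divisor of $Y$ and comparing coefficients along the centre of $v$.

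\emph{Step 1: set up models.} Let $E$ be a prime divisor on a normal model $T'\to T$ with $v=\ord_E$ (normalized). Since $v|_{K(Y)}=e\zeta$ and $\zeta$ is divisorial (Lemma~\ref{lem:restriction}), choose a smooth model $Y'\to Y$ on which $\zeta$ is the order along a prime divisor $F$. Replace $T'$ by the normalization of $Y'$ in $K(T)$, further blown up if needed, so that $\pi':T'\to Y'$ is generically finite, finite near the generic point of $E$, with $\pi'(E)=F$. Near the generic point of $F$ the model $Y'$ is smooth, and near the generic point of $E$ the map is a finite morphism between spectra of DVRs with ramification index exactly $e$, because $\ord_E(\pi'^*u)=e\,\ord_F(u)$ for a uniformizer $u$ of $F$.

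\emph{Step 2: Hurwitz at the generic point of $E$.} In characteristic zero the ramification is tame, so $K_{T'}=\pi'^*K_{Y'}+(e-1)E$ near the generic point of $E$, with canonical divisors chosen compatibly via the pullback of rational top forms. On $Y'$ we have $K_{Y'}=g^*K_Y+(A(\zeta,Y,0)-1)F$ near $F$, where $g:Y'\to Y$. Pulling back gives
\[
 \coef_E\bigl(K_{T'}-h^*\pi^*K_Y\bigr)=e\bigl(A(\zeta,Y,0)-1\bigr)+e-1=eA(\zeta,Y,0)-1,
\]
where $h:T'\to T$ and $\pi\circ h=g\circ\pi'$.

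\emph{Step 3: compare with $B$.} By definition, $K_{T'}+B_{T'}=h^*(K_T+B)=h^*(\pi^*K_Y+H)$, where $H$ is $\Q$-Cartier because $K_T+B$ and $\pi^*K_Y$ are. Hence
\[
 \coef_E B_{T'}=\coef_E\bigl(h^*\pi^*K_Y-K_{T'}\bigr)+v(H)=1-eA(\zeta,Y,0)+v(H),
\]
and therefore $A(v,T,B)=1-\coef_E B_{T'}=eA(\zeta,Y,0)-v(H)$. For the localized schemes, the same computation works over the geometric DVR, using the crepant boundary convention fixed above so that the canonical corrections agree.

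The only delicate point is Step 1. One must ensure a model where $v$ and $\zeta$ are simultaneously realized by divisors with a finite, tamely ramified map between their local rings. Normalizing $Y'$ in $K(T)$ handles this, since the local ring of $E$ is a localization of the normalization and the extension of DVRs is finite.
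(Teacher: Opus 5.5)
Your proof is correct and follows essentially the same route as the paper. You represent $\zeta$ on a smooth model, normalize it in $K(T)$, and use the tame ramification coefficient $e-1$ at the prime representing $v$. Your two coefficient computations then amount to the paper's statement that log discrepancy is multiplied by $e$ under the crepant pullback of $(Y,0)$, after which you subtract $v(H)$. The only difference is that you cite the characteristic-zero Hurwitz formula, whereas the paper verifies the coefficient $e-1$ directly from $du=s^{e-1}(ea+s\partial_s a)\,ds$.
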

\begin{proof}
Represent $\zeta$ on a smooth model of $Y$, normalize that model in $K(T)$, and work at the prime representing $v$. For an extension of geometric DVRs, write $u=a s^e$ with $a$ a unit. After choosing separating residue parameters, differentiation gives
\[
 du=s^{e-1}(ea+s\partial_s a)\,ds.
\]
The parenthesis is a unit in residue characteristic zero. Thus the relative canonical coefficient is $e-1$. The same calculation with top forms proves that log discrepancy is multiplied by $e$ under a finite crepant pullback. Subtracting $v(H)$ yields \eqref{eq:finite-ram}. This proof only requires $K_T+B$ to be $\Q$-Cartier, not $K_T$ separately.
\end{proof}

\subsection{Degree and discrepancy on projective space}
For a signed divisor $C$ on $\PP^j$ its degree is $C\cdot\OO(1)^{j-1}$. On $\PP^r_R$, its \emph{horizontal degree} is the degree of its restriction to $\PP^r_K$. The special fibre has horizontal degree zero. An ideal of horizontal generating degree $d$ means, in this paper, an ideal $\ba$ for which $\ba\otimes\OO(d)$ is generated by finitely many global sections over $\Spec R$. No bound on their orders in $u$ is implicit in this terminology.

\begin{lemma}\label{lem:degree}
For an effective $\Q$-divisor $T$ on $\PP^j$ and a normalized divisorial valuation $v$, one has
\[
 v(T)\le (\deg T)A(v,\PP^j,0).
\]
\end{lemma}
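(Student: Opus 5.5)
By linearity in $T$ and a scaling argument, it suffices to treat the case where $T$ is a prime divisor, or more simply the divisor of a single homogeneous polynomial. Write $T=\sum_i t_iT_i$ with $t_i\ge0$ and $T_i$ prime of degree $d_i$. Both sides of the inequality are additive in $T$: the left side because $v$ of a divisor is linear, and the right side because degree is linear. So we only need $v(T_i)\le d_iA(v,\PP^j,0)$ for a prime (hence Cartier) divisor $T_i$ defined by a homogeneous irreducible form $f$ of degree $d=d_i$.

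Let $x\in\PP^j$ be the generic point of the centre of $v$. Choose affine coordinates $z_1,\dots,z_j$ on a standard chart $\mathbb A^j$ containing $x$. After a general linear change of coordinates, arrange that the centre is not contained in the hyperplane at infinity of that chart. Then $f$ dehomogenizes to a polynomial $g\in\C[z_1,\dots,z_j]$ of total degree at most $d$, and $v(T)=v(g)$. It is convenient to translate so that $v(z_k)\ge0$ for all $k$; this holds automatically on a chart containing the centre. Now $g$ is a $\C$-linear combination of monomials $z^\alpha$ with $|\alpha|\le d$, so
\[
 v(g)\ \ge\ \min_\alpha v(z^\alpha)\ge0.
\]
This is the wrong direction for an upper bound, so the argument must instead use the log canonical threshold.

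The key step is the standard fact that $\mathrm{lct}(\PP^j,T)\ge1/\deg T$ fails in general (cones), so instead use: for any effective $T$ of degree $d$, choose a general hyperplane $H$ through $x$... The cleaner route is a degeneration/comparison. The sub-pair $(\PP^j,\frac1dT)$ has $K+\frac1dT\equiv -(j+1-1)H$, and we want $A(v,\PP^j,\tfrac1dT)\ge0$ in the weaker "averaged" form $v(T)/d\le A(v)$. Use the monomial valuation reduction. Choose a linear point $p$ in the centre and local linear coordinates $y_k$ at $p$. Then $v\ge\mathrm{mon}_w$ on polynomials, with $w_k=v(y_k)$, while $A(v)\ge\sum_k w_k$ by the snc inequality from Lemma~\ref{lem:snc} applied to the coordinate hyperplanes. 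It remains to show $v(f)\le d\sum_kw_k$. For this, note that $T$ does not contain the centre generically for a suitable translate, which is false, so instead intersect with a general line $\ell$ through $p$ in a direction attaining the weighted minimum.

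I expect this last step to be the main obstacle. What I would try first is to pass to a toric log resolution in the coordinates $y$ and compare $v$ with the quasimonomial valuation of weight $w$: $v(f)\le$ the order of $f$ along a general curve of "slope $w$", which is at most $\deg(f)\cdot\max_k w_k\le d\sum_kw_k$ by Bézout applied to $f$ restricted to a rational curve $t\mapsto(c_kt^{w_k})$ of degree $\max_k w_k$. To make this rigorous, I would choose arcs realizing $v(f)$ approximately via the Izumi-type comparison, and bound the intersection multiplicity of $T$ with an arc by $\deg T$ times the degree of a degree-$\max w_k$ rational curve in a general position through $p$.
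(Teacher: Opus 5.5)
Your argument breaks down at the step you reduce everything to, namely $v(f)\le d\sum_k w_k$ with $w_k=v(y_k)$ for linear coordinates $y_k$ at a point of the centre. This inequality is false, so no version of the monomial-curve or arc argument can prove it. For example, on $\mathbb A^2\subset\PP^2$ let $v$ be the monomial valuation in the regular parameters $(y_1,\,y_2-y_1^2)$ at the origin with weights $(1,N)$, $N\ge7$. Then $v(y_1)=1$ and $v(y_2)=2$. No choice of linear coordinates gives $\sum_kw_k>3$, so $d\sum_kw_k\le6$ for the conic $f=y_2-y_1^2$, whereas $v(f)=N$. The lemma itself holds there, since $A(v,\PP^2,0)=N+1$. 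The problem is that you replaced $A(v,\PP^j,0)$ by its lower bound $\sum_kw_k$. For valuations that are not monomial in linear coordinates, both $A(v,\PP^j,0)$ and $v(f)$ can exceed their monomial counterparts by arbitrarily much, and only $A(v,\PP^j,0)$ keeps track of this. Your later tools also point in the wrong direction. The order of $f$ along a general curve $t\mapsto(c_kt^{w_k})$ equals $\mathrm{mon}_w(f)$, which is at most $v(f)$. So a B\'ezout bound on that curve gives no upper bound for $v(f)$, and the appeal to arcs and an Izumi-type comparison never becomes an argument.

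You also discarded the correct framework. The statement for all $v$ is equivalent to $\lct(\PP^j;T)\ge1/\deg T$, and this does hold: locally $\lct_p\ge1/\mathrm{mult}_p$, and multiplicity never exceeds degree. Cones are no exception; for instance, $d\ge2$ concurrent lines in $\PP^2$ have threshold $2/d$. The idea you are missing, which drives the paper's proof, is that differentiation lowers $v$ by at most $A(v,\PP^j,0)$. Work on a smooth model with local coordinates $y_1,\dots,y_j$ near a general point of the prime $E$ computing $v$, where $y_1$ defines $E$. The Jacobian $\det(\partial z_i/\partial y_k)$ of the affine coordinates has order $A(v,\PP^j,0)-1$ along $E$. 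By Cramer's rule each $\partial y_k/\partial z_i$ has order at least $1-A(v,\PP^j,0)$, while each $\partial g/\partial y_k$ has order at least $v(g)-1$. Hence $v(\partial g/\partial z_i)\ge v(g)-A(v,\PP^j,0)$ for every regular $g$. Now let $z^\alpha$ be a monomial of top degree in the dehomogenized equation $g$. Then $\partial^\alpha g$ is a nonzero constant and $|\alpha|\le d$, so $0\ge v(g)-dA(v,\PP^j,0)$.
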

\begin{proof}
It suffices to treat a hypersurface of degree $d$. Choose an affine chart meeting the generic point of the centre and let $f$ be its polynomial equation. On a smooth model representing $v$, the Jacobian determinant of the birational map has order $A(v,\PP^j,0)-1$. The inverse Jacobian formula therefore shows that differentiating a regular polynomial in an affine coordinate lowers its valuation by at most $A(v,\PP^j,0)$. Some iterated derivative of $f$ of order at most $d$ is a nonzero constant. Iterating the inequality proves $v(f)\le dA(v,\PP^j,0)$. Summing with the positive coefficients of $T$ proves the assertion. This is also the argument of \cite[Lemma~2.12]{Jiao}.
\end{proof}

\subsection{Generalized pairs and the canonical bundle formula}
A $\Q$-b-divisor on $Z$ associates a $\Q$-Weil divisor to each normal birational model, compatibly with pushforward. It is $\Q$-b-Cartier and b-nef if, on some model $p:U\to Z$, it is determined by a nef $\Q$-Cartier divisor $M_U$ and by its pullbacks to higher models. A generalized sub-pair $(Z,B+\bM)$ consists of such data with $K_Z+B+M_Z$ $\Q$-Cartier. Write
\[
 K_U+B_U+M_U=p^*(K_Z+B+M_Z).
\]
The generalized log discrepancy of a prime $E$ on $U$ is $1-\coef_E B_U$. Generalized $\delta$-lc means this is at least $\delta$ on every model. We allow coefficients of $B$ above one when explicitly discussing a generalized sub-pair with no singularity assumption.

We use the canonical bundle formula in the klt-trivial form of Ambro \cite[Theorem~2.7]{Amb04}; see also \cite[Definition~3.1 and Theorem~3.5]{FG14}. If $f:W\to Z$ is a contraction, $(W,C)$ is sub-klt over the generic point, $K_W+C\simq f^*L$, and the rank-one condition holds, then
\[
 K_Z+B_Z+M_Z\simq L,
\]
with $\bM$ $\Q$-b-Cartier and b-nef. The coefficient of the discriminant at a prime $P$ on any base model is $1-a_C(P)$, where
\begin{equation}\label{eq:discrdef}
 a_C(\zeta)=\inf_{v|_{K(Z)}=e\zeta}\frac{A(v,W,C)}{e}.
\end{equation}
The expression can be negative if the total pair is not lc over that prime. Generic sub-klt singularities, rather than global lc singularities, suffice for this version of the formula.

In our applications $C\ge0$ and the generic-fibre pair is klt. The rank-one condition follows as follows. On a resolution of the generic fibre, the round-up of the discrepancy divisor has only effective exceptional contributions: coefficients along the strict transform of $C$ lie in $(-1,0]$. Its pushforward is the structure sheaf. The connected generic fibre then has only the ground field as global functions. This is precisely the required rank one. Also $B_Z\ge0$: over the generic point of a prime $P\subset Z$, a component of $f^*P$ has integral multiplicity $m\ge1$ and boundary coefficient $b\ge0$, giving $a_C(P)\le(1-b)/m\le1$.

\section{Signed thresholds over a discrete valuation ring}\label{sec:local}

We first establish the two-dimensional estimate that will be iterated. All the statements in this section hold over a geometric DVR $R$ with uniformizer $u$, without an algebraic-closedness assumption on its residue field. Horizontal means dominating $\Spec R$; vertical means supported over its closed point. A vertical valuation is one for which $v(u)>0$.

\subsection{A surface estimate}
\begin{proposition}\label{prop:surface}
Let $(\PP^1_R,C)$ be a signed sub-$\eta$-lc pair, with $0<\eta\le1$ and $\degh C^+\le k$, where $k\ge1$. For every normalized vertical divisorial valuation,
\begin{equation}\label{eq:surface}
 A(v,\PP^1_R,C)\ge
 \frac{\eta}{2\lceil2k/\eta\rceil}v(u)
 \ge\frac{\eta^2}{6k}v(u).
\end{equation}
\end{proposition}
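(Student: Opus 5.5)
The plan is to reduce to a one-parameter family of vertical valuations centred at a single closed point, and then use that log discrepancy along such a family is piecewise linear with bounded slope. Sub-$\eta$-lc gives a lower bound at divisorial points of small denominator, and a Dirichlet-type approximation transfers it to $v$. Throughout, valuations are normalized by $v(u)=1$, so \eqref{eq:surface} reads $A(v,\PP^1_R,C)\ge\eta/(2m)$ with $m=\lceil2k/\eta\rceil$. The second inequality in \eqref{eq:surface} is elementary: $m\le 2k/\eta+1\le3k/\eta$, since $k\ge1\ge\eta$.

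First I dispose of the trivial case. If the centre of $v$ is the whole special fibre $F=\Div(u)$, then $v=\ord_F$ and $A=1-\coef_F C\ge\eta$. Otherwise the centre is a closed point $x$ of $F$, with residue field a finite extension of $\kappa$. The negative part $C^-$ only increases $A$, and the vertical coefficient of $C^+$ along $F$ is at most $1-\eta$, so only the horizontal part $C^+_h$ is delicate. If needed, I would first pass to a finite unramified extension of $R$ so that $x$ becomes rational. Then I choose local parameters $(u,y)$ at $x$. The key numerical input is that the local intersection number $(C^+_h\cdot F)_x$ is at most $\degh C^+\le k$.

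Next I would set up the valuative tree at $x$ of vertical valuations with $v(u)=1$, rooted at $\ord_F$, and consider the segment $[\ord_F,v]$. I parametrize it by a skewness-type parameter $\alpha\ge0$; for monomial valuations in $(u,y)$ this is $\alpha=v(y)$, and in general the parameter is defined via Puiseux-type data. Along this segment, $A(\cdot,\PP^1_R,0)$ is piecewise linear with slope at least $1$. The function $w\mapsto w(C^+_h)$ is nondecreasing, piecewise linear, with slope at most the intersection multiplicity, hence at most $k$. The function $w\mapsto w(C^-)$ is nondecreasing. The bound on the slope of $w(C^+_h)$ is exactly where $\degh C^+\le k$ enters; the negative part needs no degree bound, which is consistent with the statement.

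The core step is the following argument. Suppose $A(v)<\eta/(2m)$ at some point $\alpha_0$ of the segment. Near $\alpha_0$, the slope of $A$ is controlled by $k$ from the harmful side. Choose a divisorial point $w$ on the segment whose parameter is rational with denominator $q\le m$ and lies within $1/m$ of $\alpha_0$; I would handle the $\ord_F$ end separately using $A(\ord_F)\ge\eta$. The normalized value $A(w)$ is then bounded above by roughly $A(v)+k/m\le A(v)+\eta/2$. On the other hand, $w$ equals $1/q$ times an integrally normalized divisorial valuation, so sub-$\eta$-lc forces $A(w)\ge\eta/q$. Balancing these two bounds with the choice $m=\lceil2k/\eta\rceil$ should contradict $A(v)<\eta/(2m)$.

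I expect the main obstacle to be making the slope and denominator bookkeeping precise for non-monomial valuations, with several Puiseux levels and a non-closed residue field. There, the one-sided nature of the slope bound must be used carefully: decreasing toward the root is harmless, since $A$ only grows with slope at least $1$ in the other direction. I would first try to prove the claim for the monomial case by a direct computation, then reduce the general case to it by a sequence of blow-ups at $x$ that makes $v$ monomial in new coordinates. At each stage I would use crepant pullback; the horizontal degree bound persists because exceptional curves are vertical, and the multiplicity of the strict transform of $C^+_h$ only drops.
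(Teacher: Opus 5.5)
Your core step does not go through, and the problem is structural, not just bookkeeping.

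\textbf{The balancing does not close.} As written, $\eta/q\le A(w)\le A(v)+k/m\le A(v)+\eta/2$ only gives $A(v)\ge\eta/q-\eta/2$, which is vacuous once $q\ge2$. To reach $\eta/(2m)$ the error must be at most $\eta/(2q)$. That requires Dirichlet's $|\alpha_0-p/q|<1/(qm)$ with $q\le m$, which would give $A(v)\ge(\eta-k/m)/q\ge\eta/(2m)$.

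\textbf{The repair needs control on both sides of $v$.} A Dirichlet approximant may lie on the far side of $v$ from $\ord_F$. There you have no upper bound for $A(w)-A(v)$: the term $w(C^-)-v(C^-)$ is unbounded because $C^-$ has no degree bound. You also cannot discard $C^-$, since the hypothesis $A(w)\ge\eta/q$ concerns the signed pair. (When $C^-=0$ and $v$ is monomial, your Dirichlet idea does work, because then $A$ grows at rate at most one away from $\ord_F$. The signed hypothesis is exactly where it breaks.)

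\textbf{Root-side approximants alone cannot suffice.} Take $\alpha_0=\tfrac12-\epsilon$ and $k\ge2\eta$. Every fraction $p/q<\tfrac12$ satisfies $\tfrac12-p/q\ge1/(2q)$, so for every $p/q\le\alpha_0$,
\[
 \frac{\eta}{q}-k\Bigl(\alpha_0-\frac pq\Bigr)\le\frac{\eta-k/2}{q}+k\epsilon<k\epsilon .
\]
Your argument uses only sub-$\eta$-lc at root-side divisorial points and the one-sided slope bound. These constraints are therefore compatible with $A(v)\to0$. What actually bounds $A(v)$ here is the value at $\alpha=\tfrac12$, which lies beyond $v$.

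\textbf{The missing idea is a two-sided (mediant) estimate.} In your language, you need $w\mapsto w(C^-)$ to be concave along the segment, not merely nondecreasing. Then at an interior divisorial point of denominator $q$, its contribution dominates the chord between the two adjacent divisorial points. Meanwhile $w(C^+)$ exceeds that chord by at most $\mu^+/q\le k/q^2$.

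\textbf{How the paper does it.} The paper's proof is the discrete form of this. It blows up closed points of the special fibre and records $m_D=\ord_D(u)$ and $a_D$. It uses $a_E=a_D+1-\mu$ or $a_E=a_D+a_{D'}-\mu$. It bounds $\mu\le\mu^+\le k/m_E$ by intersecting the positive horizontal boundary with the whole, possibly nonreduced, special fibre. Induction then gives $a_D\ge dm_D$, with an extra $\eta/2$ once $m_D>M=\lceil2k/\eta\rceil$.

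This approach has three advantages:
\begin{enumerate}
\item The two-parent formula uses sub-$\eta$-lc at both parents.
\item The negative part enters only through $\mu$, with the favourable sign.
\item Every vertical divisorial valuation is reached by point blowups, so the non-monomial and non-rational-point cases you flag as the main obstacle never need separate treatment.
\end{enumerate}
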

\begin{proof}
Successively blow up closed points of the special fibre of this regular surface. Its reduced vertical support stays simple normal crossing. For a vertical component $D$ write
\[
 m_D=\ord_D(u),\qquad a_D=A(D,\PP^1_R,C).
\]
Let $P$ be the next centre and $E$ the new exceptional curve. Write $\mu$ for the signed multiplicity at $P$ of the strict horizontal boundary, and $\mu^+$ for the multiplicity of its positive part. If $P$ lies on one or two vertical components, the blowup formula gives, respectively,
\begin{equation}\label{eq:blowup}
 (m_E,a_E)=
 \begin{cases}
 (m_D,a_D+1-\mu),&P\in D\text{ only},\\
 (m_D+m_{D'},a_D+a_{D'}-\mu),&P\in D\cap D'.
 \end{cases}
\end{equation}
All coefficients in these formulas are signed crepant coefficients.

Each horizontal curve is proper and finite over $R$ and is flat, since it is torsion free over the DVR. On each blown-up model its intersection with the total special fibre equals its generic degree. At $P$ the local contribution of the positive horizontal boundary is at least
\[
 [\kappa(P):\kappa],\Bigl(\sum_{D\ni P}m_D\Bigr)\mu^+.
\]
Indeed, each vertical component is regular at $P$, and its local intersection with a horizontal curve is at least the multiplicity of that curve. Thus
\begin{equation}\label{eq:horizontal-budget}
 \mu\le\mu^+\le k/m_E.
\end{equation}
This also explains why non-rational closed points cause no loss.

Set $M=\lceil2k/\eta\rceil$ and $d=\eta/(2M)$. We prove inductively that
\begin{equation}\label{eq:surfaceind}
 a_D\ge dm_D,\qquad
 a_D\ge dm_D+\eta/2\quad\text{if }m_D>M.
\end{equation}
When $m_D\le M$, the hypothesis $a_D\ge\eta$ proves the first assertion. Suppose $m_E>M$; then \eqref{eq:horizontal-budget} gives $\mu\le\eta/2$. In the one-component case the parent also has multiplicity greater than $M$, and $1-\mu\ge\eta/2$, so the stronger assertion follows. In the two-component case there are three possibilities. If both parents have multiplicity at most $M$, then $a_E\ge2\eta-\eta/2$ and $dm_E\le\eta$. If exactly one parent has larger multiplicity, use the extra $\eta/2$ for that parent, the bound $a_{D'}\ge\eta$ for the other, and $dm_{D'}\le\eta/2$. If both are larger, the two extra terms pay for $\mu$ and leave $\eta/2$. This proves \eqref{eq:surfaceind} in every case.

Every vertical divisorial valuation on a regular surface is extracted by a finite sequence of point blowups. Applying \eqref{eq:surfaceind} to its representing component proves the first inequality in \eqref{eq:surface}. Finally $M\le3k/\eta$. The calculation is the signed version of the surface argument in \cite[Proposition~3.1]{Jiao}; the signed multiplicity and the total, possibly nonreduced, fibre are both essential.
\end{proof}

\subsection{Contacts of roots and the threshold formula}
Let $H=\sum_i b_iH_i$ be a signed horizontal divisor on $\PP^1_R$, disjoint from the infinity section, with $b_i\le1$. Each component has a monic equation in the affine coordinate $x$. In a finite splitting extension write its roots as distinct elements $\alpha_j$, assigning to a root the coefficient of its component. With the extended valuation normalized by $\ord(u)=1$, put
\[
 c_{ij}=\ord(\alpha_i-\alpha_j),\qquad c_{ii}=+\infty.
\]
These numbers are the \emph{contact orders}. All finite contact orders are nonnegative rational numbers. Define
\begin{equation}\label{eq:contactI}
 I(H)=\max\left\{0,\ \sum_i b_i\min(t,c_{ji})-t:
 j\text{ a root},\ t\ge0\right\}.
\end{equation}
The maximum is finite and is attained: each function is piecewise linear, and its final slope is $b_j-1\le0$.

\begin{lemma}[Root formula; cf.\ {\cite[Proposition~4.1]{Jiao}}]\label{lem:root}
With this notation,
\begin{equation}\label{eq:rootthreshold}
 \inf_{v\ \mathrm{vertical}}
 \frac{A(v,\PP^1_R,H)}{v(u)}=1-I(H).
\end{equation}
The infimum is attained and is unchanged by adding a nonpositive multiple of the infinity section.
\end{lemma}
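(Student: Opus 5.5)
We will reduce to monomial valuations in root coordinates. Pass to the finite splitting extension $R'$ of $R$. There every $H_i$ splits into sections $x=\alpha_j$. Keep the extended valuation normalized by $\ord(u)=1$.

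By Lemma~\ref{lem:ramification}, log discrepancies of vertical valuations upstairs are $e$ times those downstairs, minus the order of the relative canonical correction. That correction is vertical over the DVR. Using the unnormalized extension, as in the remark before Lemma~\ref{lem:ramification}, the quotient $A(v,\PP^1_R,H)/v(u)$ is computed equally well upstairs. It suffices to check that every vertical valuation downstairs has an extension, and every extension restricts to a vertical valuation. This follows from Lemma~\ref{lem:restriction}. So we may assume all roots are rational over $R$ and $H=\sum_j b_j\{x=\alpha_j\}$.

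For the lower bound, let $v$ be a vertical divisorial valuation with $v(u)=1$. Its centre lies in the affine chart, since otherwise only the infinity section is involved and adding a nonpositive multiple of it only helps. Choose a root $\alpha_j$ maximizing $v(x-\alpha_j)$ and set $t=v(x-\alpha_j)\ge0$. By the ultrametric inequality, $v(x-\alpha_i)\ge\min(t,c_{ji})$, with equality when $c_{ji}\neq t$. Maximality of $t$ forces equality even when $c_{ji}=t$. Hence
\[
v(H)=\sum_i b_i\min(t,c_{ji}).
\]
Next, $A(v,\PP^1_R,0)\ge v(u)+v(x-\alpha_j)=1+t$, because $u$ and $x-\alpha_j$ are regular parameters and Lemma~\ref{lem:snc} applies with zero boundary. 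Since $v(H)$ is linear in the coefficients, including negative ones,
\[
A(v,H)=A(v,0)-v(H)\ge1-\Bigl(\sum_i b_i\min(t,c_{ji})-t\Bigr)\ge1-I(H).
\]
This step needs care: $A(v,0)-v(H)$ is the correct formula because the pair $(\PP^1_R,0)$ is regular and $H$ is Cartier there.

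For attainment, take $(j,t)$ achieving the maximum in \eqref{eq:contactI}. We may take $t$ rational, since the maximum of the piecewise linear function occurs at a breakpoint (some $c_{ji}$) or at $0$. Let $v$ be the monomial valuation in the coordinates $(u,x-\alpha_j)$ with weights $(1,t)$, after scaling to integers. It is divisorial, extracted by a weighted blowup, and satisfies $A(v,0)=1+t$. Its value on $x-\alpha_i$ equals $\min(t,c_{ji})$ exactly, by the ultrametric inequality for monomial valuations. Equality therefore holds throughout. The value $t=0$ corresponds to $v=\ord$ of the special fibre, where the quotient equals $1$. This accounts for the $\max\{0,\cdot\}$ in \eqref{eq:contactI}.

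The remaining work is descending to $R$. The minimizing valuation upstairs restricts to a divisorial valuation downstairs by Lemma~\ref{lem:restriction}. The quotients agree because both numerator and denominator transform by the same factor. I expect this base-change bookkeeping to be the main obstacle: the vertical canonical correction must be accounted for, and the splitting extension must not be renormalized.
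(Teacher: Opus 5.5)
Your strategy is the paper's: split the roots, choose the root of maximal contact, and use the exact equality $v(x-\alpha_i)=\min(t,c_{ji})$. You rightly note that maximality is needed when $c_{ji}=t$, since some $b_i$ are negative. You then apply the parameter inequality and attain the bound with a monomial valuation. The gap is the base change, which you flag as the main obstacle but do not actually carry out.

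After ``we may assume all roots are rational over $R$'' you work on $\PP^1_{R'}$ with the purely horizontal boundary $\sum_j b_j\{x=\alpha_j\}$, and you still treat $u$ as a uniformizer. Both fail once $h=\ord_s(u)>1$. This is precisely the case of non-integral contact orders: for $H=b\{x^2=u\}$ the roots $\pm\sqrt u$ have contact $1/2$.

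\begin{itemize}
\item Since $u$ is a unit times $s^h$, the pair $u,\,x-\alpha_j$ is not a system of regular parameters. With your normalization $w(u)=1$, the correct inequality is $A(w,\PP^1_{R'},0)\ge w(s)+w(x-\alpha_j)=1/h+t$, not $1+t$.
\item For the same reason a monomial valuation in $(u,x-\alpha_j)$ is not defined, because the differences $\alpha_j-\alpha_i$ are not monomials in $u$. It must be taken in $(s,x-\alpha_j)$ with weight ratio $ht$, and its log discrepancy for the zero boundary is again $1/h+t$.
\item Let $\pi:\PP^1_{R'}\to\PP^1_R$ be the base change and $Y_0'$ the special fibre of $\PP^1_{R'}$. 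The quotient $A/v(u)$ is preserved upstairs only for the crepant boundary $\pi^*H-(h-1)Y_0'$. For $\pi^*H$ alone it drops by $(h-1)/h$, exactly as your own sentence about subtracting the relative canonical correction says.
\end{itemize}

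These errors have equal size and opposite sign, because $w(s)+(h-1)w(s)=w(u)$. That is why your final inequality and the attained value $1-I(H)$ come out right, but as written the argument is only valid when the roots already lie in $R$.

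The repair is the paper's bookkeeping. Extend $v$ to $w$ with $w|_{K(x)}=av$, and put $e=v(u)$, $e'=w(s)$, so that $he'=ae$. Use the identity $aA(v,\PP^1_R,H)=A(w,\PP^1_{R'},\pi^*H)+(h-1)e'$ together with the parameters $(s,x-\alpha_j)$. Dividing by $ae$ then gives $A(v,\PP^1_R,H)/e\ge 1+t-\sum_i b_i\min(t,c_{ji})$ honestly. The same identity, applied to the monomial valuation in $(s,x-\alpha_j)$, gives attainment.

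You also cannot sidestep this by renormalizing to $\ord(s)=1$ and quoting the split case. All contact orders, and hence $I$, are multiplied by $h$, so the split formula yields $1-hI(H)$. Only after adding the vertical correction $h-1$ and dividing by $h$ do you recover $1-I(H)$.
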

\begin{proof}
Take a splitting DVR $R'$ with uniformizer $s$ and $\ord_s(u)=h$. Given a normalized valuation $v$ downstairs, extend it to a normalized valuation $w$ upstairs and write
\[
 w|_{K(x)}=av,\quad e=v(u),\quad e'=w(s),
 \qquad ae=he'.
\]
The finite canonical correction has coefficient $h-1$ on the special fibre. Lemma~\ref{lem:ramification} gives
\begin{equation}\label{eq:rootram}
 aA(v,\PP^1_R,H)
 =A(w,\PP^1_{R'},H')+(h-1)e'.
\end{equation}

In the finite chart select $j$ maximizing $\rho=w(x-\alpha_j)/(ae)$. For every $i$,
\begin{equation}\label{eq:exactcontact}
 \frac{w(x-\alpha_i)}{ae}=\min(\rho,c_{ji}).
\end{equation}
For unequal candidate orders this is the valuation rule for a sum. When they are equal, that rule gives the lower bound and maximality of $\rho$ gives the upper bound. Exact equality matters because some coefficients $b_i$ may be negative. The parameter discrepancy inequality for $(s,x-\alpha_j)$ gives
\[
 A(w,\PP^1_{R'},0)\ge e'+ae\rho.
\]
Substitute this and \eqref{eq:exactcontact} into \eqref{eq:rootram}. Using $he'=ae$, we obtain
\[
 \frac{A(v,\PP^1_R,H)}e
 \ge1+\rho-\sum_i b_i\min(\rho,c_{ji})\ge1-I(H).
\]
At infinity the horizontal order is zero and the smooth special fibre gives a quotient at least one.

Conversely, if a positive rational $t$ realizes the maximum, choose the monomial valuation in $(s,x-\alpha_j)$ whose ratio of weights is $ht$. Such a valuation is divisorial: the Euclidean sequence of point blowups extracts it. The parameter discrepancy inequality is then equality, and the orders in \eqref{eq:exactcontact} are given by the monomial minimum rule. Restrict this valuation to $K(x)$ and normalize. Equation~\eqref{eq:rootram} gives exactly $1-I(H)$. If $I(H)=0$, the original special-fibre valuation attains one. The minimizing valuation is in the finite chart, where a non-positive infinity section has order zero. This proves the last assertion as well.
\end{proof}

\subsection{Two identities for polynomial coefficients}
We record the algebraic identities used to turn \eqref{eq:contactI} into ideals. A \emph{Gauss extension} of a valuation to a rational function field in independent variables assigns to a polynomial the minimum of the values of its coefficients, and to a quotient the difference of these minima.

\begin{lemma}\label{lem:polynomial}
Let $z^N+c_1z^{N-1}+\cdots+c_N$ be a monic polynomial over a valued field, with $c_N\ne0$ and $c_0=1$. Extend the valuation to a splitting field, with no further rescaling. If its root orders are $\lambda_1\le\cdots\le\lambda_N$, then
\begin{equation}\label{eq:maxroot}
 \lambda_N=\max_{0\le j<N, c_j\ne0}
 \frac{\ord(c_N)-\ord(c_j)}{N-j}.
\end{equation}
For any finite list of eigenvalues $\xi_i$, some nonzero, and their elementary symmetric functions $e_j$, one also has
\begin{equation}\label{eq:minroot}
 \min_{\xi_i\ne0}\ord(\xi_i)
 =\min_{j\ge1, e_j\ne0}\frac{\ord(e_j)}j.
\end{equation}
\end{lemma}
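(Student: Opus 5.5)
Both identities are Newton polygon statements, and I plan to prove them by comparing valuations of elementary symmetric functions with sums of the ordered root orders.

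For \eqref{eq:maxroot}, write the roots $\beta_1,\dots,\beta_N$ with $\ord(\beta_i)=\lambda_i$, so that $c_j=(-1)^je_j(\beta)$. Since $c_N\ne0$, every root is nonzero and every $\lambda_i$ is finite.

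I first establish the upper bound on the coefficient orders. Every monomial in $e_j$ is a product of $j$ distinct roots, so it has order at least $\lambda_1+\cdots+\lambda_j$. Hence
\[
\ord(c_j)\ge\lambda_1+\cdots+\lambda_j,
\]
and therefore
\[
\ord(c_N)-\ord(c_j)\le\lambda_{j+1}+\cdots+\lambda_N\le(N-j)\lambda_N.
\]
This gives the inequality $\ge$ in \eqref{eq:maxroot}, the right side being at most $\lambda_N$. Here $j=0$ is always allowed, since $c_0=1$.

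For equality, choose $j$ maximal with $\lambda_j<\lambda_N$, and put $j=0$ if all the $\lambda_i$ are equal. Then the monomial $\beta_1\cdots\beta_j$ is the unique monomial of $e_j$ of minimal order $\lambda_1+\cdots+\lambda_j$. Any other choice of $j$ roots must include one of order $\lambda_N>\lambda_j$ in place of some root among the first $j$. By the strict triangle inequality,
\[
\ord(c_j)=\lambda_1+\cdots+\lambda_j,
\]
so $c_j\ne0$. It follows that
\[
\ord(c_N)-\ord(c_j)=\sum_{i>j}\lambda_i=(N-j)\lambda_N,
\]
and the maximum is attained. The only delicate point is this uniqueness of the minimal monomial, which comes from choosing the break of the Newton polygon at the last slope.

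For \eqref{eq:minroot}, I first discard the zero eigenvalues. The elementary symmetric functions of the full list agree with those of the nonzero sublist for every index up to the number $m$ of nonzero entries, and vanish beyond $m$. So I may assume all $\xi_i\ne0$. Order them so that $\mu_1\le\cdots\le\mu_m$, where $\mu_i=\ord(\xi_i)$. As before, $\ord(e_j)\ge\mu_1+\cdots+\mu_j\ge j\mu_1$, which gives $\ge$. For the reverse inequality, take $j$ to be the number of indices with $\mu_i=\mu_1$. Then $e_j$ has the unique monomial $\xi_1\cdots\xi_j$ of minimal order $j\mu_1$. Every other product of $j$ distinct entries uses some $\xi_i$ with $\mu_i>\mu_1$, so $\ord(e_j)=j\mu_1$ and in particular $e_j\ne0$. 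Hence $\ord(e_j)/j=\mu_1$, which proves the identity.

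Neither part uses rescaling: all orders are taken with the extended valuation. The statement is therefore insensitive to the normalization convention fixed before the lemma.
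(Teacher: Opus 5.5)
Your proposal is correct and follows essentially the same route as the paper: the termwise bound $\ord(c_j)\ge\lambda_1+\cdots+\lambda_j$ gives the inequality, and the unique least-order term of $c_{N-a}$ (resp.\ $e_q$) gives equality, where $a$ is the number of roots of maximal order (resp.\ $q$ the number of eigenvalues of minimal order). The only cosmetic difference is that you discard the zero eigenvalues explicitly, whereas the paper simply notes that its argument already tolerates them.
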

\begin{proof}
Note that every term of $c_j$ has order at least $\lambda_1+\cdots+\lambda_j$. Hence each quotient in \eqref{eq:maxroot} is at most $\lambda_N$. If precisely $a$ roots have largest order, the product of all other roots is the unique least-order term in $c_{N-a}$. Its order is the sum of the $N-a$ smaller orders, so that quotient is $\lambda_N$. This includes $a=N$, where $c_0=1$.

For \eqref{eq:minroot}, every product in $e_j$ has order at least $j$ times the minimum. If precisely $q$ eigenvalues have the minimum finite order, their product is the unique least-order term of $e_q$. Both arguments allow repetitions and cancellation in other coefficients, and the second allows zero eigenvalues.
\end{proof}

\subsection{Finite lists and multiplicity at a maximum}
Choose an integer $m>0$ clearing all the coefficients of $H$. Make $mb_i$ copies of each positive root and $m|b_i|$ copies of each negative root. These copies are called occurrences. Let their total counts be
\[
 P=m\degh H^+,\qquad N=m\degh H^-.
\]
A positive root has at most $m$ occurrences. For a positive centre $j$, let $A_j(q)$ be the sum of the $q$ smallest contacts with positive occurrences, and $B_j(\ell)$ the sum of the $\ell$ smallest contacts with negative occurrences. Empty sums are zero. The negative contacts are all finite.

\begin{lemma}[Finite contact data; cf.\ {\cite[Lemma~4.2]{Jiao}}]\label{lem:lists}
If $P\le m$, then $I(H)=0$. If $P>m$, put
\[
 q_\ell=P-m-N+\ell,qquad
 \max(0,m+N-P)\le\ell\le N.
\]
Then
\begin{equation}\label{eq:listidentity}
 mI(H)=\max\{0,A_j(q_\ell)-B_j(\ell)\}_{j,\ell},
\end{equation}
where $j$ ranges over positive centres. If $I(H)>0$, for some fixed $\ell$ the maximum in \eqref{eq:listidentity} is attained by more than $m$ positive occurrences simultaneously.
\end{lemma}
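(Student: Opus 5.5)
The plan is to rewrite the function $F_j(t)=\sum_i b_i\min(t,c_{ji})-t$ from \eqref{eq:contactI} in terms of occurrences, and then to compute its maximum over $t$ by examining its slopes. Multiplying by $m$ gives
\[
 mF_j(t)=\sum_{\text{pos.\ occ.}}\min(t,c)-\sum_{\text{neg.\ occ.}}\min(t,c)-mt .
\]
The self-contact $c_{jj}=+\infty$ contributes $t$ for each positive occurrence of $j$.

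First I will dispose of negative centres. At a negative root $j$, its own occurrences contribute $-t$ at every height. The maximum over $t$ should therefore be dominated by the maximum at a positive centre. To check this, I will compare $F_j$ with $F_{j'}$, where $j'$ is a positive root of maximal contact with $j$. Ultrametric considerations give $\min(t,c_{ji})\le\min(t,c_{j'i})$ for positive $i$ once $t$ is at most $c_{jj'}$, with the reverse control for negative $i$. I expect this step to be the main obstacle, because the signs make the comparison asymmetric. If the direct comparison fails, I would instead work with the function from the proof of Lemma~\ref{lem:root}: a maximum attained at a negative centre corresponds to a monomial valuation around $\alpha_j$, and I would perturb that valuation toward the nearest positive root without decreasing the discrepancy defect.

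At a positive centre $j$, $mF_j$ is concave in each contact group separately, but it is not concave overall. Its slope at height $t$ is
\[
 \#\{\text{pos.\ occ.\ with }c>t\}-\#\{\text{neg.\ occ.\ with }c>t\}-m .
\]
The value $mF_j(t)$ can be read off by sorting contacts. Suppose that at height $t$ exactly $P-q$ positive and $N-\ell$ negative occurrences have contact exceeding $t$. Then $mF_j(t)=A_j(q)-B_j(\ell)+(P-q-N+\ell-m)t$. The maximum over $t$ occurs at breakpoints. There the term in $t$ can be absorbed by choosing $q=q_\ell=P-m-N+\ell$, so that the linear coefficient vanishes.

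For the inequality $\ge$, I will show that $A_j(q_\ell)-B_j(\ell)\le\max_t mF_j(t)$. This follows by evaluating $mF_j$ at $t$ equal to the $(\ell+1)$-st negative contact and using the fact that any choice of $q$ smallest positive contacts is bounded by the actual sums. The constraint $q_\ell\ge0$ gives the range for $\ell$. If $P\le m$, then every slope is nonpositive, so $F_j\le0$ and $I(H)=0$.

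For the final assertion, I will argue as follows when $I(H)>0$. At a maximizing $t$ the left slope is nonnegative and the right slope is nonpositive. This yields $\ell$ together with a level at which the positive occurrences whose contact is at least $t$, counting the centre's own occurrences, number more than $m$. These occurrences all realize the maximum simultaneously, because shifting the centre to any one of them leaves the contact profile up to height $t$ unchanged by the ultrametric inequality. Hence more than $m$ positive occurrences attain the maximum in \eqref{eq:listidentity}.
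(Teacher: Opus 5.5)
Your architecture (slopes of the piecewise-linear functions, sorted contact lists, reduction to positive centres, a cluster argument for multiplicity) is the paper's. Your $\le$ direction via tie-breaking at a maximizing breakpoint is essentially right. The step you give for $\ge$, however, is wrong. Write $a_1\le\cdots\le a_P$ and $\beta_1\le\cdots\le\beta_N$ for the positive and negative contacts at $j$. At $t=\beta_{\ell+1}$ the negative sum is exactly $B_j(\ell)+(N-\ell)t$. The positive sum $\sum_i\min(t,a_i)=\min_q\{A_j(q)+(P-q)t\}$ is concave, hence \emph{at most} $A_j(q_\ell)+(P-q_\ell)t$. So this evaluation yields $mF_j(\beta_{\ell+1})\le A_j(q_\ell)-B_j(\ell)$, the opposite inequality, with equality only when $\beta_{\ell+1}\in[a_{q_\ell},a_{q_\ell+1}]$. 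For example, take $m=1$, positive contacts $5,5,\infty$, one negative contact $1$, and $\ell=0$, $q_0=1$: you get $mF_j(1)=1<5=A_j(1)-B_j(0)$. Also, for $\ell=N$ the point $\beta_{\ell+1}$ does not exist. The fix is to reverse the roles. Take $t\in[a_{q_\ell},a_{q_\ell+1}]$ (or $t=0$ if $q_\ell=0$), where the positive sum is exactly $A_j(q_\ell)+(P-q_\ell)t$; this is finite, since $q_\ell\le P-m$ and at most $m$ entries are infinite. Then bound the negative sum by $B_j(\ell)+(N-\ell)t$, which is valid for all $t$ and $\ell$. These are exactly the two identities the paper combines to obtain \eqref{eq:listidentity} at each centre in both directions at once.

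Two further steps need completing. For a negative centre $j$, the ultrametric inequality actually gives $F_j=F_{j'}$ on $[0,c_{jj'}]$. What you are missing is the range $t>c_{jj'}$. There, by maximality of $c_{jj'}$, only negative occurrences (including those of $j$) have contact exceeding $t$, so $mF_j$ has negative slope and $\max_tF_j\le\max_tF_{j'}$. No perturbation of valuations is needed. For the multiplicity claim, a nonnegative left slope at an arbitrary maximizer gives only $P_0\ge m+N_0$, which may equal $m$. For instance, with $m=1$ and two positive roots of coefficient one at contact $1$, every $t\ge1$ is a maximizer, but for $t>1$ the cluster of $j$ carries only $m$ occurrences. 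As in the paper, take the earliest maximizing level $t_0>0$, where the left slope is strictly positive, so $P_0-N_0>m$. To get a single $\ell$ serving the whole cluster, take $\ell=N-N_0$, which is admissible since $q_\ell=P-m-N_0>0$. For each positive $j'$ in the cluster:
\begin{itemize}
\item $mF_{j'}(t_0)=mI(H)$ by the ultrametric inequality;
\item the negative sum at $t_0$ equals $B_{j'}(\ell)+N_0t_0$;
\item concavity of the positive sum then gives $mI(H)\le A_{j'}(q_\ell)-B_{j'}(\ell)$, and the corrected $\ge$ direction gives the reverse.
\end{itemize}
The paper instead counts at least $P_0-N_0-m$ positive contacts equal to $t_0$, using that every strict subcluster has signed count at most $m$.
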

\begin{proof}
The first assertion follows because the total positive coefficient is at most one. For the second, let $a_1\le\cdots\le a_P$ and $b_1\le\cdots\le b_N$ be the two lists for a centre. Directly from their ordering,
\[
 \sum_{i=1}^N\min(t,b_i)
 =\min_{0\le\ell\le N}\{B_j(\ell)+(N-\ell)t\},
\]
and, for $0\le q\le P-m$,
\[
 \max_{t\ge0}\left\{\sum_{i=1}^P\min(t,a_i)-(P-q)t\right\}
 =A_j(q).
\]
For the latter identity, the first $q$ terms are at most their contact values and each remaining term $\min(t,a_i)-t$ is nonpositive. Equality holds for $t$ between the $q$-th and $(q+1)$-st contacts. There are at most $m$ infinite entries, so the required prefix sum is finite. Substituting the first identity into $mI(H)$ and then maximizing in $t$ gives the stated range of $q_\ell$; values $q_\ell<0$ cannot give a positive maximum.

To make the results more self-contained, we justify restriction to positive centres and the multiplicity assertion. If $I(H)>0$, let $t_0$ be the earliest contact level at which the global maximum is attained. In the contact cluster containing a maximizing centre, let $P_0,N_0$ count positive and negative occurrences. A contact cluster at level $t_0$ means the roots whose mutual contacts are at least $t_0$; the ultrametric inequality makes this an equivalence class. The slope immediately before the first maximum is positive, so
\[
 P_0-N_0>m.
\]
Every strict subcluster immediately after that level has signed count at most $m$, since otherwise a centre in it would increase beyond the global maximum. Every centre in the original cluster has the same value at $t_0$, by the ultra-metric inequality. In particular, positive centres exist there and give that maximum.

Choose $\ell_0=N-N_0$. Then
\[
 q_{\ell_0}=(P-P_0)+(P_0-N_0-m).
\]
At each positive centre in the cluster, the $P-P_0$ outside positive occurrences have contacts less than $t_0$. At least $P_0-N_0-m$ additional positive contacts equal $t_0$: the strict subcluster containing the centre has positive count at most $m+N_0$. The $\ell_0$ smallest negative contacts are exactly those outside the original cluster. These observations give
\[
 A_j(q_{\ell_0})-B_j(\ell_0)=mI(H)
\]
at all $P_0>m$ positive occurrences in that cluster. This proves both remaining assertions.
\end{proof}

\section{Coefficient ideals and discrepancy under projection}\label{sec:projection}

The purpose of this section is to preserve the discrepancy quotient while lowering the dimension. We work both on $\PP^j$ over $\C$ and on $\PP^r_R$. The absolute coefficient-ideal construction is due to Jiao \cite[Proposition~4.3]{Jiao}. Here we give its algebraic construction in the relative setting, where vertical orders are unrestricted.

\subsection{Projection and its coefficient ideals}
Write $Y=\PP^r_R$, $Y_0=(u=0)$, and $\Theta=H+cY_0$ with $H$ horizontal. Each prime component of $H$ has a primitive homogeneous equation: its coefficients are not all divisible by $u$. Choose a point of $\PP^r_\kappa$ outside the reductions of all these equations and lift it to an $R$-section $q$. Such a point exists because $\kappa$ is infinite. After a linear change of coordinates the equations are monic in the coordinate of projection from $q$.

Blowing up $q$ resolves projection as a $\PP^1$-bundle over $Z=\PP^{r-1}_R$. The exceptional section has crepant coefficient $1-r\le0$, and is the section at infinity. The horizontal components lie in its complement, the total space of $\OO_Z(1)$, and are finite flat over $Z$. For a normalized divisorial valuation $\zeta$ on $Z$ define
\begin{equation}\label{eq:athreshold}
 a_\Theta(\zeta)=\inf_{v|_{K(Z)}=e\zeta}
 \frac{A(v,Y,\Theta)}e.
\end{equation}
By localizing over $\zeta$ and using Lemma~\ref{lem:root},
\begin{equation}\label{eq:rootbase}
 a_\Theta(\zeta)=A(\zeta,Z,cZ_0)-I_H(\zeta).
\end{equation}
Here $I_H$ is computed from the projected horizontal equations. The base Jacobian replaces the constant one in \eqref{eq:rootthreshold} by $A(\zeta,Z,0)$, and the term $cY_0$ subtracts $c\zeta(u)$. The non-positive infinity section does not alter the infimum. The same formula holds absolutely, with $Y=\PP^j$, $Z=\PP^{j-1}$, and $cZ_0$ omitted.

\begin{proposition}\label{prop:ideals}
Assume the coefficients of $H$ are at most one, and put $s=\degh H^+$. If $s\le1$, then $I_H=0$. If $s>1$, there are finitely many triples $(\ba,\bb,d)$ of nonzero coherent ideals on $Z$ and positive integers, with generating twists $d_+,d_-\ge0$, such that
\begin{equation}\label{eq:idealformula}
 I_H(\zeta)=\max\left\{0,
 \frac{\zeta(\ba)-\zeta(\bb)}d\right\}_{(\ba,\bb,d)},
 \qquad \frac{d_+}d\le s^2.
\end{equation}
The ideals $\ba(d_+)$ and $\bb(d_-)$ are globally generated over $R$. The identity holds simultaneously for every normalized divisorial valuation with centre on $Z$. There is no bound on $\degh H^-$ or on the vertical orders of these ideals.
\end{proposition}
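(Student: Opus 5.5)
We combine the local formula \eqref{eq:rootbase} with the finite-list identity of Lemma~\ref{lem:lists}, and then express each term of \eqref{eq:listidentity} through symmetric functions of roots, i.e.\ through coefficients of explicit polynomials over $Z$. Fix $m$ clearing the coefficients of $H$, and let $F^+$ and $F^-$ be the products of the monic equations (in the projection coordinate $x$) of the positive and negative components, each raised to $m$ times its coefficient. Over the generic point of a valuation $\zeta$, pass to a splitting extension. By Lemma~\ref{lem:lists}, $mI_H(\zeta)$ is the maximum of $0$ and the quantities $A_j(q_\ell)-B_j(\ell)$. Here the contacts are $\ord(\alpha_i-\alpha_j)$, measured in the valuation extended so that the order of $u$ — or, for a general $\zeta$, the normalization of $\zeta$ — is preserved. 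If $s\le1$, then $P\le m$ and $I_H=0$ by the first assertion of Lemma~\ref{lem:lists}.

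\textbf{Positive part.} For a positive centre $\alpha_j$, the differences $\alpha_i-\alpha_j$ taken over positive occurrences are the roots of $F^+(x+\alpha_j)$. The prefix sum $A_j(q)$ of the $q$ smallest contact orders is controlled by the ideal generated by the elementary symmetric functions of degree $q$ in these differences. To remove the dependence on $j$, we form the resultant or norm over $j$. Concretely, we take the coefficients of $\prod_{j}(T-\sigma_q(\{\alpha_i-\alpha_j\}))$, where $\sigma_q$ ranges over the $q$-fold products of differences. This product is symmetric in all roots and hence has coefficients in $\OO_Z$, homogeneous of controlled degree. Lemma~\ref{lem:polynomial}, via \eqref{eq:maxroot} for maxima and \eqref{eq:minroot} for minima, then recovers $\max_j\min$ of the orders as a maximum over finitely many quotients $(\zeta(c_N)-\zeta(c_k))/(N-k)$. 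Each such quotient is itself of the form $(\zeta(\ba)-\zeta(\bb'))/d$ with $\ba,\bb'$ generated by coefficient monomials.

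\textbf{Degree count.} A difference $\alpha_i-\alpha_j$ has weighted degree one in the coordinates of $Z$. A product of $q\le P$ of them taken over $\le P$ centres gives degree $\le P^2$ in numerator terms, normalized by a denominator at least $m^2$. Since $P/m=s$, this yields $d_+/d\le s^2$.

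\textbf{Negative part and selection.} The negative term $B_j(\ell)$ must be handled by a signed realization: the subtracted quantity $B_j(\ell)$ is the minimal sum of $\ell$ negative contacts, and it enters only through the ideal $\bb$. The main obstacle is that maximizing over $j$ a difference $A_j-B_j$ is not a difference of separately maximized quantities. My plan is to treat the pair $(F^+,F^-)$ jointly. For each $j$ we form the single symmetric expression $\sigma_q(\text{positive differences})\cdot\sigma_\ell(\text{negative differences})^{-1}$ and use the multiplicity statement of Lemma~\ref{lem:lists}: at a maximizing level, more than $m$ positive occurrences attain the maximum simultaneously. This lets us select the root of largest order via \eqref{eq:maxroot} in a characteristic polynomial whose variable records $A_j(q_\ell)$ and $B_j(\ell)$ at the same centre. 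The negative factors then only contribute to $\bb$, and the degree $d_-$ is unbounded. This matches the statement, which asserts no bound on $d_-$ or on the vertical orders.

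\textbf{Globalization.} Finally, all coefficients are polynomials in the coefficients of $F^\pm$, which are global sections of $\OO_Z(k)$ over $R$. The resulting ideals are therefore coherent, and $\ba(d_+),\bb(d_-)$ are globally generated. The formula is valuation-independent, so it holds simultaneously for every $\zeta$ centred on $Z$.
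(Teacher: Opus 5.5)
\textbf{Gap: no centre-wise elements with exact orders.} Your outline has the same architecture as the paper's proof. You use Lemma~\ref{lem:lists}, build one polynomial over $Z$ whose roots record $A_j(q_\ell)$ and $B_j(\ell)$ at the same centre, and read off the largest root order by \eqref{eq:maxroot}. But the step carrying the content is missing: you never construct, at a positive centre $j$, a single element whose order is \emph{exactly} $A_j(q_\ell)$ (resp.\ $B_j(\ell)$) for every valuation.

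If $\sigma_q$ is the elementary symmetric function of the differences, you only have $\ord\sigma_q\ge A_j(q)$. Strict inequality can occur when contacts tie, and Lemma~\ref{lem:lists} shows that ties are typical at the maximum. Take roots $0,u,-u+u^2$ with coefficients $\tfrac12$. Then $m=2$, $P=3$, $N=0$, $q_0=1$, every contact equals $1$, and $mI_H=1$. At the centre $0$ the differences are $0,u,-u+u^2$, so $e_1=u^2$ has order $2$. Hence \eqref{eq:maxroot} applied to $\prod_j(z-e_1(j))$ returns $2$, which is wrong.

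If instead, as in your displayed product, the roots are all $q$-fold products at all centres, Lemma~\ref{lem:polynomial} recovers only the overall maximum or minimum of their orders, not $\max_j\min$. Handling the inner minimum by separate polynomials gives $\min\max$, which is in general strictly larger than $\max\min$. So \eqref{eq:idealformula} is not established by your argument, and the "single symmetric expression'' in your selection paragraph is exactly the object you have not produced.

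\textbf{How the paper supplies it.} The paper takes the $q_\ell$-th and $\ell$-th exterior powers of multiplication by the squared coordinate difference on the finite flat algebras $\cD_+\times_Z\cD_\pm$ over $\cD_+$. At a centre, the eigenvalues are then precisely the $q_\ell$-fold and $\ell$-fold products. By \eqref{eq:minroot}, the characteristic coefficients $Q_a^\pm$ give the minimal eigenvalue order $2A_j(q_\ell)$, resp.\ $2B_j(\ell)$. This minimum over $a$ is collapsed into one section by independent formal variables: $U_\ell=\sum_aT_a^+(Q_a^+)^{L/a}$, and similarly $V_\ell$. Their Gauss orders are exactly $2LA_j(q_\ell)$ and $2LB_j(\ell)$, simultaneously for every valuation. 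Only then does $\Nm_{\cD_+/Z}(V_\ell z-U_\ell)$ have roots of order $2L(A_j(q_\ell)-B_j(\ell))$. The ideals are generated by the $T$-coefficients of its coefficients. Your symmetric-function globalization would be adequate once such centre-wise elements exist.

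\textbf{Where the multiplicity statement is used.} You do not need the multiplicity statement of Lemma~\ref{lem:lists} to select the largest root; \eqref{eq:maxroot} does that unconditionally. It is needed to justify keeping only the coefficients with $h\le P-m$. Every quotient is at most $I_H$. When $I_H>0$, more than $m$ roots attain the largest order, so the coefficient giving equality has $P-h>m$ and survives the truncation. This truncation is what yields $d_+/d=Pq_\ell/(m(P-h))\le s^2$; your "denominator at least $m^2$'' has no other justification. Likewise, $q_\ell\le P-m$ and the disjointness of positive and negative roots are what keep $U_{j\ell}$ and $V_{j\ell}$ nonzero.
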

\begin{proof}
If $s\le1$, we shall apply Lemma~\ref{lem:lists}. Otherwise clear the coefficients by $m$ as in that lemma. Let $\cD_+\to Z$ be the disjoint union of $mb_i$ copies of each positive horizontal component, and define $\cD_-\to Z$ similarly for negative components. These are copies of finite flat schemes, not nonreduced multiples of divisors. Their ranks are $P=ms$ and $N=m\degh H^-$. Monicity supplies a basis $1,x,\ldots,x^{d_i-1}$ for each component algebra over $Z$, so flatness is valid even above nonreduced special fibres.

On $\cD_+\times_Z\cD_\pm$, the difference of the two fibre coordinates is a section twisted by $\OO_Z(1)$. Multiplication by its square is an endomorphism of the corresponding finite locally free algebra, twisted by $\OO_Z(2)$. For an admissible $\ell$, take the $q_\ell$-th exterior power of the positive map and the $\ell$-th exterior power of the negative map. On a split fibre over a chosen root $j$, their eigenvalues are the products of $q_\ell$ positive or $\ell$ negative squared contact differences. Occurrences are selected separately even when they correspond to the same root. The zeroth exterior power is the identity, which also covers $N=0$.

Write $Q_a^+$ and $Q_a^-$ for their characteristic coefficients of positive index $a$. They are sections of the pullbacks of $\OO_Z(2q_\ell a)$ and $\OO_Z(2\ell a)$. Choose $L$ divisible by all these indices and form
\[
 U_\ell=\sum_a T_a^+(Q_a^+)^{L/a},\qquad
 V_\ell=\sum_a T_a^-(Q_a^-)^{L/a},
\]
using independent formal variables for the two expressions and for different $\ell$. These have twists $2Lq_\ell$ and $2L\ell$. Equation~\eqref{eq:minroot}, applied to the Gauss valuation in the formal variables, gives at a root $j$
\begin{equation}\label{eq:gaussorders}
 \ord_G U_{j\ell}=2LA_j(q_\ell),\qquad
 \ord_G V_{j\ell}=2LB_j(\ell).
\end{equation}
Both expressions are nonzero at every positive generic root: $q_\ell\le P-m$ excludes a forced zero product, and positive and negative components have no common root. The formal variables are never specialized. They ensure that a single collection of coefficients records the minimum for every valuation.

Now take the finite flat norm to $Z$:
\begin{equation}\label{eq:normcoeff}
 \Nm_{\cD_+/Z}(V_\ell z-U_\ell)
 =\sum_{h=0}^{P} C_{\ell h}(T)z^{P-h}.
\end{equation}
It is the determinant of multiplication on an algebra of rank $P$, and is therefore defined over all of $Z$. Its leading and constant coefficients are nonzero. Each coefficient in the formal variables in $C_{\ell h}$ is a section of
\begin{equation}\label{eq:twist}
 \OO_Z(d_{\ell h}),\qquad
 d_{\ell h}=2L\{hq_\ell+(P-h)\ell\}.
\end{equation}
This follows because a term chooses $h$ factors of $U$ and $P-h$ factors of $V$. Let $\bj_{\ell h}$ be the base ideal generated by these coefficient sections, omitting identically zero coefficients. Then $\bj_{\ell h}(d_{\ell h})$ is globally generated, and
\[
 \zeta(\bj_{\ell h})=\ord_G C_{\ell h}.
\]
This equality uses the actual coefficient ideal, so it retains any common power of $u$.

After dividing \eqref{eq:normcoeff} by its leading coefficient, its roots are $U_{j\ell}/V_{j\ell}$, counted with their occurrence multiplicities. Lemmas~\ref{lem:polynomial} and~\ref{lem:lists} imply
\begin{equation}\label{eq:retained}
 I_H(\zeta)=\max\left\{0,
 \frac{\zeta(\bj_{\ell P})-\zeta(\bj_{\ell h})}
 {2Lm(P-h)}\right\}_{\ell,\ 0\le h\le P-m}.
\end{equation}
Indeed, every quotient is at most $I_H(\zeta)$. If the maximum is positive, one $\ell$ attains the largest root order at more than $m$ occurrences. The coefficient providing equality in \eqref{eq:maxroot} then has $h<P-m$, hence is retained. When $I_H=0$, all root orders in question are nonpositive, and so are the retained quotients.

Take $\ba=\bj_{\ell P}$, $\bb=\bj_{\ell h}$ and $d=2Lm(P-h)$. The positive twist is $d_+=2LPq_\ell$. Since $P-h\ge m$ and $q_\ell\le P-m\le P$,
\[
 \frac{d_+}d=\frac{Pq_\ell}{m(P-h)}\le\frac{P^2}{m^2}=s^2.
\]
All operations were exterior powers, characteristic coefficients, determinants, powers, or extraction of coefficients over $R$. None divides out a vertical common factor. This proves the asserted relative extension of \cite[Proposition~4.3]{Jiao}.
\end{proof}

\subsection{Realizing one ideal expression by a divisor}
\begin{lemma}\label{lem:realize}
Let $Z$ be projective space over $\C$ or over a geometric DVR, and let $F$ be a signed divisor. Suppose a valuative function is the minimum of finitely many expressions of the form
\[
 A(\zeta,Z,F),\qquad
 A(\zeta,Z,F)-\alpha\zeta(\ba)+\alpha\zeta(\bb),
\]
where $\alpha>0$ is rational and the ideals have globally generated twists $d_+,d_-$. Assume every expression is at least $\gamma$, for all normalized divisorial valuations, where $0<\gamma<1$. For a prescribed $\zeta_0$, there is a signed sub-$\gamma$-lc divisor $C$ on $Z$ realizing the minimum at $\zeta_0$. Its positive degree is at most $\deg F^++\alpha d_+$, with horizontal degree in the relative case.
\end{lemma}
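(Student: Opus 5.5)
First I identify which expression attains the minimum at $\zeta_0$. If it is $A(\zeta,Z,F)$ itself, put $C=F$. Every expression is at least $\gamma$, and $A(\cdot,Z,F)$ is among them, so $(Z,F)$ is sub-$\gamma$-lc. Its positive degree is $\deg F^+$, which is within the bound.

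Otherwise the minimum at $\zeta_0$ is attained by $A(\zeta,Z,F)-\alpha\zeta(\ba)+\alpha\zeta(\bb)$ for some triple. The plan is to replace the ideals by divisors that have the same values at $\zeta_0$ and no smaller values elsewhere. I take the general member approach.

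Choose a large integer $N$ with $N\alpha\in\Z$. Let $G_+$ be a general member of the linear system of sections of $\ba(d_+)$, i.e. a general linear combination of the finitely many generating sections. Define $G_-$ in the same way from $\bb(d_-)$. Generality gives $\zeta(G_+)=\zeta(\ba)$ and $\zeta(G_-)=\zeta(\bb)$ at the single valuation $\zeta_0$. In the relative case I must not divide out common powers of $u$, since these are part of the ideal. For any $\zeta$ one always has $\zeta(G_+)\ge\zeta(\ba)$.

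To handle $\bb$ with the correct sign, I would instead take $G_-$ to be a specific generator, or a combination with $\zeta_0(G_-)=\zeta_0(\bb)$. Then I set
\[
 C=F+\alpha G_+-\alpha G_-.
\]
This gives $A(\zeta_0,Z,C)$ equal to the chosen expression at $\zeta_0$, so the minimum is realized. The positive part satisfies $C^+\le F^++\alpha G_+$, so its (horizontal) degree is at most $\deg F^++\alpha d_+$. The divisor $G_-$ can only lower positive coefficients. Here $G_+$ is horizontal of degree $d_+$ up to vertical components, which have horizontal degree zero.

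The main obstacle is proving that $C$ is sub-$\gamma$-lc at every valuation $\zeta$, not only at $\zeta_0$. We have
\[
 A(\zeta,Z,C)=A(\zeta,Z,F)-\alpha\zeta(G_+)+\alpha\zeta(G_-),
\]
where $\zeta(G_+)\ge\zeta(\ba)$ and $\zeta(G_-)\ge\zeta(\bb)$. The first inequality goes the wrong way, so the hypothesis cannot be compared directly.

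I would first try taking $G_+$ general only away from the base locus. On a log resolution of $\ba$, $\bb$ and $F$, write $\ba\cdot\OO=\OO(-E_a)$, and similarly for $\bb$ and $F$. A general member of the base-point-free pullback then equals $E_a$ plus a reduced divisor $S$ transverse to everything, and similarly for $G_-$. On that resolution $C$ is crepant to
\[
 F_W-\alpha E_a+\alpha E_b+\alpha S-\alpha S',
\]
with simple normal crossing support.

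The coefficient of each exceptional or fixed prime equals $1$ minus the expression evaluated at that prime, hence is at most $1-\gamma$. The new components $S$ and $S'$ have coefficients $\alpha$ and $-\alpha$. This requires $\alpha\le1-\gamma$, which I would arrange by subdividing: write $\alpha G_+=\frac{\alpha}{N}\sum_k G_+^{(k)}$ as an average of $N$ general members, so each new coefficient is $\alpha/N$. Lemma~\ref{lem:snc} then gives sub-$\gamma$-lc everywhere, using $\gamma<1$.

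For $G_-$ the same averaging keeps $\zeta_0(G_-)=\zeta_0(\bb)$, since general members compute the ideal's order at any fixed valuation. So both ideals are replaced at once. In the DVR case, the vertical parts of $E_a$ and $E_b$ are kept as fixed components, and generality is taken over the infinite residue field.
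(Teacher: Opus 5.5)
Your final argument — log-resolve $\ba$, $\bb$ and $F$, observe that each fixed coefficient of the crepant transform is $1$ minus the selected expression and hence at most $1-\gamma$, split the free residual part of $\ba(d_+)$ into $N$ general members of coefficient $\alpha/N\le1-\gamma$, and conclude by Lemma~\ref{lem:snc} — is essentially the paper's proof, including keeping the vertical fixed parts in the relative case. One slip to fix: the crepant transform of $C=F+\alpha G_+-\alpha G_-$ is $F_W+\alpha E_a-\alpha E_b+\alpha S-\alpha S'$, not the version you wrote with the signs of $E_a$ and $E_b$ reversed, and your (correct) claim that each fixed coefficient equals $1$ minus the chosen expression holds for this corrected divisor.
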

\begin{proof}
We shall choose an expression giving the minimum at $\zeta_0$. Resolve the two ideals, the support of $F$, and the valuation $\zeta_0$ on $p:U\to Z$. Write
\[
 \ba\OO_U=\OO_U(-G_+),\qquad
 \bb\OO_U=\OO_U(-G_-).
\]
The signed divisor $C_U^0$ defined by
\[
 K_U+C_U^0=p^*(K_Z+F)+\alpha G_+-\alpha G_-
\]
has coefficients at most $1-\gamma$, by the assumed bound for this expression. The support may be made simple normal crossing. The residual systems of $p^*\ba(d_+)$ and $p^*\bb(d_-)$ are free. Choose $q$ general positive members, each with coefficient $\alpha/q\le1-\gamma$, and one general negative member with coefficient $-\alpha$. They may be chosen transverse to the fixed support and not containing the prime representing $\zeta_0$. Adding them to $C_U^0$ gives a simple normal crossing sub-$\gamma$-lc divisor by Lemma~\ref{lem:snc}. It is the crepant pullback of the corresponding signed combination $C$ of generating divisors on $Z$. The residual members have order zero at $\zeta_0$, so the prescribed equality holds. The positive members have total weighted degree $\alpha d_+$; negative members cannot increase the positive part beyond the stated bound.

Over $R$, spread the resolution and the finite systems to a neighbourhood of the generic point of the base divisor on a complex variety. After shrinking, they remain free. Bertini over $\C$ gives the same choices. A system of twist zero may still have a vertical fixed divisor; it remains in $G_+$ or $G_-$ throughout. If the minimum is the expression without ideals, use the same argument with trivial ideals.
\end{proof}

This lemma only realizes the function at one chosen valuation. No upper bound for its value is imposed, and no effective extraction with that value is required. Its global sub-$\gamma$-lc conclusion holds for the chosen representative, which is enough for the induction below.

\subsection{The relative projective-space estimate}
\begin{theorem}\label{thm:relative}
Let $(\PP^r_R,\Theta)$ be a signed sub-$\eta$-lc pair, with $0<\eta\le1$ and $\degh\Theta^+\le k$, $k\ge1$. Every normalized vertical divisorial valuation satisfies
\begin{equation}\label{eq:relative}
 A(v,\PP^r_R,\Theta)\ge
 \frac{\eta^{2^r}}{6^{2^r-1}k^{r2^{r-1}}}\,v(u).
\end{equation}
\end{theorem}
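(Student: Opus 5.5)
We argue by induction on $r$, with Proposition~\ref{prop:surface} as the base case $r=1$. There the bound is $\eta^2/(6k)$, which is exactly \eqref{eq:relative} with $2^1=2$, $6^{1}$ and $k^{1}$. For the inductive step write $\eta_r$ and $k$ for the data on $\PP^r_R$, and let $\delta_{r-1}(\eta',k')$ denote the constant in \eqref{eq:relative} on $\PP^{r-1}_R$. The aim is to prove a recursion of the form
\[
 \delta_r(\eta,k)\ \ge\ \delta_{r-1}\bigl(\eta^2/(6k),\,k^2\cdot\text{const}\bigr)
\]
or a variant of it. Then the exponent of $\eta$ doubles at each step, since $2\cdot2^{r-1}=2^r$, and the powers of $6$ and $k$ accumulate as in the statement. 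The bookkeeping check is that $6^{2^r-1}=6\cdot(6^{2^{r-1}-1})^2$, and that the $k$-exponent satisfies $r2^{r-1}=2\cdot(r-1)2^{r-2}+2^{r-1}$. This is consistent with replacing $\eta$ by about $\eta^2/(6k)$ and $k$ by about $k^2$ at the projection step.

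The projection step runs as follows. Write $\Theta=H+cY_0$, choose a general $R$-section $q$ as in the setup of Section~\ref{sec:projection}, and project to $Z=\PP^{r-1}_R$. For a vertical $v$ on $Y$ with $v|_{K(Z)}=e\zeta$, we have $\zeta$ vertical on $Z$ and $A(v,Y,\Theta)\ge e\,a_\Theta(\zeta)$. Also $v(u)=e\zeta(u)$. It therefore suffices to bound $a_\Theta(\zeta)\ge\delta\,\zeta(u)$. By \eqref{eq:rootbase} and Proposition~\ref{prop:ideals},
\[
 a_\Theta(\zeta)=\min\Bigl\{A(\zeta,Z,cZ_0),\;A(\zeta,Z,cZ_0)-\tfrac1d\zeta(\ba)+\tfrac1d\zeta(\bb)\Bigr\}_{(\ba,\bb,d)},
\]
with $d_+/d\le k^2$ and $\degh(cZ_0)^+=0$. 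The next task is a uniform lower bound $\gamma$ for this function over all divisorial $\zeta$ on $Z$, including horizontal ones. The horizontal ones restrict to the generic fibre $\PP^r_K$, where the sub-$\eta$-lc hypothesis gives $a_\Theta\ge\eta$ directly. For vertical $\zeta$, however, we need an a priori bound of the type $\gamma\approx\eta^2/(6k)$. I would obtain it by restricting to a general line, in the spirit of the surface estimate. Concretely, take the $\PP^1_{R'}$ over the DVR at $\zeta$ formed by the projection fibres, and apply Proposition~\ref{prop:surface} fibrewise after localizing at $\zeta$ via \eqref{eq:rootbase}. Since $\degh H^+\le k$ on that line, this should give $a_\Theta(\zeta)\ge\frac{\eta^2}{6k}\zeta(u)$ when $\zeta$ is the special-fibre valuation. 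For general vertical $\zeta$ it gives only the weaker fact that $a_\Theta\ge\eta$ on valuations coming from $Y$ (note that $a_\Theta(\zeta)$ is an infimum of quotients of sub-$\eta$-lc discrepancies by $e\ge1$, which yields only a positive lower bound after normalizing).

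Next apply Lemma~\ref{lem:realize} at the prescribed $\zeta_0=\zeta$, with $F=cZ_0$ and $\alpha=1/d$. This yields a signed sub-$\gamma$-lc divisor $C$ on $Z$ with $\degh C^+\le k^2$ and $A(\zeta,Z,C)=a_\Theta(\zeta)$. The inductive hypothesis on $\PP^{r-1}_R$, with parameters $(\gamma,k^2)$, then gives $a_\Theta(\zeta)\ge\delta_{r-1}(\gamma,k^2)\zeta(u)$, and substituting $\gamma=\eta^2/(6k)$ closes the recursion.

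The main obstacle is the middle step, securing the uniform constant $\gamma$ that Lemma~\ref{lem:realize} requires. The difficulty is that $a_\Theta(\zeta)$ is an infimum divided by the restriction index $e$, so a priori it only inherits $\ge\eta/e$. I would try to show $a_\Theta(\zeta)\ge\min(\eta,\eta^2/(6k))$ by combining the two results above. Along a curve transverse to $\zeta$, the localized root formula Lemma~\ref{lem:root} reduces the question to $\PP^1$ over the DVR of $\zeta$. There Proposition~\ref{prop:surface}, which only needs the horizontal positive degree $\le k$ and sub-$\eta$-lc on that surface, bounds $1-I_H(\zeta)$ relative to $A(\zeta,Z,cZ_0)$. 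If the constants do not fit exactly, I would adjust the recursion by taking $\gamma=\eta^2/(6k)$ and verifying the stated exponents of $6$ and $k$ at the end.
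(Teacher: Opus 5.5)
Your architecture is the paper's. You induct from Proposition~\ref{prop:surface}, project to $Z=\PP^{r-1}_R$, and reduce to $A(v,Y,\Theta)\ge e\,a_\Theta(\zeta)$ with $v(u)=e\zeta(u)$. You then use the ideal description from \eqref{eq:rootbase} and Proposition~\ref{prop:ideals}, apply Lemma~\ref{lem:realize} with $F=cZ_0$, and invoke the induction hypothesis at $(\eta^2/(6k),k^2)$; your exponent bookkeeping is correct.

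The gap is exactly the step you call the main obstacle, and part of what you say there is false. For $\zeta$ horizontal over $R$, sub-$\eta$-lc does \emph{not} give $a_\Theta(\zeta)\ge\eta$ directly. It gives only $A(v,Y,\Theta)/e\ge\eta/e$, and values below $\eta$ do occur. For instance, on $\PP^2_R$ take $\Theta=bC$, with $C$ the constant cuspidal cubic $x^2=s^3$, $b=(5-\eta)/6$ and $\eta\le1/5$. This pair is $\eta$-lc, but after projecting from a general section it has $a_\Theta=1-\tfrac32(2b-1)=\eta/2$ at the image of the cusp. The value is attained by the valuation of weights $(2,3)$, for which $e=2$. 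This case cannot be skipped: Lemma~\ref{lem:realize} needs the lower bound at every divisorial valuation of $Z$ to make $C$ globally sub-$\gamma$-lc, and that is what the induction hypothesis on $\PP^{r-1}_R$ requires. Your pessimism about general vertical $\zeta$ has the same source: you are normalizing by $\zeta(u)$, with $u$ the uniformizer of the original $R$.

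The missing observation, which is the first step of the paper's proof, is to apply Proposition~\ref{prop:surface} over the geometric DVR $R'=\OO_\zeta$ itself, with its own uniformizer $u_\zeta$. Apply it to the crepant pair obtained by base-changing the $\PP^1$-bundle to $\Spec\OO_\zeta$. This pair satisfies the hypotheses:
\begin{enumerate}
\item It is sub-$\eta$-lc because it is crepant; the base canonical correction and $cY_0$ survive as vertical coefficients, which the surface estimate allows with any sign.
\item The components of $H$ are finite flat over $Z$, so they remain horizontal, with positive degree at most $k$.
\item The infinity section has coefficient $1-r\le0$.
\end{enumerate}
Every $v$ with $v|_{K(Z)}=e\zeta$ has $v(u_\zeta)=e$. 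Hence Proposition~\ref{prop:surface} gives $A(v,Y,\Theta)\ge\frac{\eta^2}{6k}\,e$, i.e.\ $a_\Theta(\zeta)\ge\gamma=\eta^2/(6k)$ uniformly for all normalized divisorial $\zeta$, horizontal or vertical. The restriction index cancels exactly, and no adjustment of the recursion is needed. Every expression whose minimum is $a_\Theta$ is then at least $\gamma$, and the rest of your argument goes through as written.

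Your final paragraph gestures at this localization, but it stays conditional and sits beside the incorrect claims above. As written, the proposal does not establish the one nonformal input to Lemma~\ref{lem:realize}.
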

\begin{proof}
For $r=1$ this is Proposition~\ref{prop:surface}. Suppose $r\ge2$. Use the projection above, writing $\Theta=H+cY_0$. At every geometric DVR of $Z$, the crepant localized pair on the $\PP^1$-bundle is sub-$\eta$-lc. Its positive horizontal degree is at most $k$; the infinity section is nonpositive. Proposition~\ref{prop:surface} gives
\[
 a_\Theta(\zeta)\ge\gamma:=\eta^2/(6k)
\]
for all normalized divisorial valuations $\zeta$ of the base. By \eqref{eq:rootbase} and Proposition~\ref{prop:ideals}, each of the finitely many ideal expressions for $a_\Theta$ is therefore at least $\gamma$ globally.

Given $v$, its restriction is nontrivial because $v(u)>0$. Write it as $e\zeta$ using Lemma~\ref{lem:restriction}. Lemma~\ref{lem:realize} produces a signed sub-$\gamma$-lc divisor $C$ on $Z$ with $\degh C^+\le k^2$ and
\[
 A(\zeta,Z,C)=a_\Theta(\zeta).
\]
Its vertical part includes $cZ_0$ and the fixed divisors of the selected ideals. Apply the induction hypothesis:
\[
 a_\Theta(\zeta)\ge
 \frac{(\eta^2/(6k))^{2^{r-1}}}
 {6^{2^{r-1}-1}(k^2)^{(r-1)2^{r-2}}}\,\zeta(u)
 =\frac{\eta^{2^r}}{6^{2^r-1}k^{r2^{r-1}}}\,\zeta(u).
\]
Finally, $A(v,\PP^r_R,\Theta)\ge e a_\Theta(\zeta)$ and $v(u)=e\zeta(u)$. Multiplication by the same restriction index on both sides proves \eqref{eq:relative}.
\end{proof}

\subsection{An absolute comparison with exponent $2^j-1$}
Define
\begin{equation}\label{eq:absolconstants}
 p_j=2^j-1,\quad b_j=j2^{j-1}-1,\quad
 A_1=1,\qquad A_j=2+6^{p_{j-1}}A_{j-1}\quad(j\ge2).
\end{equation}
\begin{theorem}\label{thm:absolute}
If $(\PP^j,\Theta)$ is signed sub-$\eta$-lc and $\deg\Theta^+\le k$, with $k\ge1$ and $0<\eta\le1$, then
\begin{equation}\label{eq:absolute}
 A(v,\PP^j,0)\le A_j k^{b_j}\eta^{-p_j}
 A(v,\PP^j,\Theta)
\end{equation}
for every normalized divisorial valuation $v$. In dimensions two and three, the factors $8k^3\eta^{-3}$ and $1730k^{11}\eta^{-7}$ suffice.
\end{theorem}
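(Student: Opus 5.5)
We argue by induction on $j$, following the structure of Theorem~\ref{thm:relative} but now tracking the full discrepancy quotient $A(v,\PP^j,0)/A(v,\PP^j,\Theta)$ instead of the vertical order $v(u)$.

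\emph{Base case $j=1$.} A divisorial valuation on $\PP^1$ is $\ord_x$ at a point, with $A(v,\PP^1,0)=1$. Since $(\PP^1,\Theta)$ is sub-$\eta$-lc, $A(v,\PP^1,\Theta)\ge\eta$. This gives the factor $\eta^{-1}$, matching $A_1=1$, $b_1=0$, $p_1=1$.

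\emph{Inductive step $j\ge2$.} First project from a general point $q$ to $Z=\PP^{j-1}$; after blowing up $q$ this is a $\PP^1$-bundle whose infinity section has nonpositive coefficient. Write $v|_{K(Z)}=e\zeta$ by Lemma~\ref{lem:restriction}; the restriction is nontrivial for the relevant valuations. The case where $v$ is horizontal over $Z$ is the easy one: there $A(v,\PP^j,0)=1$ and the sub-$\eta$-lc bound suffices.

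Next, localize at the geometric DVR of $\zeta$. Proposition~\ref{prop:surface} then gives, for the crepant localized pair,
\[
 A(v,\PP^j,\Theta)\ge e\,a_\Theta(\zeta), \qquad a_\Theta\ge\gamma=\eta^2/(6k)
\]
globally. By \eqref{eq:rootbase} and Proposition~\ref{prop:ideals}, $a_\Theta$ is a minimum of ideal expressions, each at least $\gamma$, with positive twist ratio at most $k^2$. Lemma~\ref{lem:realize} then produces a signed sub-$\gamma$-lc divisor $C$ on $Z$ with $\deg C^+\le k^2$ and $A(\zeta,Z,C)=a_\Theta(\zeta)$.

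\emph{Comparing discrepancies.} The key comparison is
\[
 A(v,\PP^j,0)\le e\bigl(A(\zeta,Z,0)+\rho\bigr),
\]
where $\rho$ is the normalized contact level of $v$ along the fibre coordinate, as in the proof of Lemma~\ref{lem:root}. The term $eA(\zeta,Z,0)$ is handled by the induction hypothesis applied to $(Z,C)$:
\[
 eA(\zeta,Z,0)\le A_{j-1}(k^2)^{b_{j-1}}\gamma^{-p_{j-1}}\,e\,a_\Theta(\zeta)
 \le A_{j-1}6^{p_{j-1}}k^{2b_{j-1}+p_{j-1}}\eta^{-2p_{j-1}}\,A(v,\PP^j,\Theta).
\]
For the fibre term $e\rho$, I would use the surface estimate Proposition~\ref{prop:surface} in the weighted form $A/e\ge 1+\rho-\sum b_i\min(\rho,c_{ji})$. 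When $\rho$ is large, the positive coefficients contribute at most $1-\eta/2$ of the slope, so this quantity is at least roughly $\eta\rho/2$. This yields $e\rho\lesssim\eta^{-1}A(v,\PP^j,\Theta)$ plus a multiple of the base term.

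Combining the two terms gives
\[
 A_j\le2+6^{p_{j-1}}A_{j-1}, \qquad p_j=1+2p_{j-1},
\]
and the $k$-exponent $2b_{j-1}+p_{j-1}+\ldots$ should reduce to $b_j=j2^{j-1}-1$. The factor $\eta^{-1}$ from the fibre multiplying $\eta^{-2p_{j-1}}$ is what produces the recurrence $p_j=1+2p_{j-1}$. The constants $8$ and $1730$ in dimensions two and three then follow by substitution.

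\emph{Main obstacle.} The hardest step is controlling the fibre term $e\rho$ by $\eta^{-1}A(v,\PP^j,\Theta)$ rather than by $\gamma^{-1}$. Doing it with $\gamma^{-1}$ would give the exponent $2^{j+1}$-type loss. This step needs the exact contact equality \eqref{eq:exactcontact} with signed coefficients, together with the restriction index $e$ being kept rather than normalized away. I would first try splitting according to whether $\rho$ exceeds the largest contact at which the positive cluster mass exceeds $1-\eta/2$. Below that threshold, $\rho$ is absorbed into the $I_H$ term already realized by $C$; above it, the slope bound gives $\eta\rho/2$ directly.
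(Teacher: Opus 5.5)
Your skeleton is the paper's: project, realize $a_\Theta(\zeta)$ by a sub-$\gamma$-lc divisor $C$ of degree at most $k^2$ on $\PP^{j-1}$ via Lemma~\ref{lem:realize}, and bound $e\,a_0(\zeta)$ by induction, where $a_0(\zeta)=A(\zeta,\PP^{j-1},0)$. The comparison step that produces the factor $\eta^{-1}$, however, does not work as proposed.

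\textbf{The ``key comparison'' is false.} You take $\rho$ to be the maximal contact with the roots of the projected equations, as in Lemma~\ref{lem:root}. With that $\rho$, the parameter discrepancy inequality only gives $A(v,\PP^1_{\OO_\zeta},0)\ge e(1+\rho)$, not an upper bound. For a counterexample, take $\Theta=b\{x=0\}$ on $\PP^2$. Blow up the origin, then $m-1$ successive free points on the newest exceptional curve. The last divisor has $e=1$, $a_0=1$ and $\rho=1$, but $A(v,\PP^2,0)=m+1$. The paper needs no upper bound on $A(v,\PP^j,0)$. It computes $v(\Theta^+)/e=\int_0^\rho M(t)\,dt$ and works with $A(v,\PP^j,\Theta)=A(v,\PP^j,0)-v(\Theta)$, so only the lower bound $e(1+\rho)$ enters.

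\textbf{The missing idea: a discriminant--resultant divisor.} The levels where the positive cluster mass $M(t)$ is at least $1-\eta/2$ cannot be ``absorbed into the $I_H$ term''. The function $I_H$ only detects clusters of signed mass exceeding one. For example, take $\Theta=\tfrac12\{x=0\}+\tfrac12\{x=y_2\}$ on $\PP^3$, which is sub-$\eta$-lc for $\eta\le\tfrac12$ with $k=1$. Here $I_H\equiv0$. Yet for $\zeta$ monomial in $(y_1,y_2)$ with $\zeta(y_2)=c$, the two roots form one cluster of mass one up to the arbitrary level $c$. This depth must be charged to $a_0(\zeta)$, and nothing in your sketch does that. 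The paper uses the divisor
\[
 T=\sum_i b_i^2\Div(\operatorname{disc}G_i)+2\sum_{i<i'}b_ib_{i'}\Div(\operatorname{res}(G_i,G_{i'}))
\]
on the base. It has three properties:
\begin{enumerate}
\item $\deg T\le k^2$;
\item $\zeta(T)\ge\int_0^\rho\bigl(M(t)^2-N(t)\bigr)\,dt$, where $N(t)$ is the sum of the squared coefficients in the cluster;
\item $\zeta(T)\le k^2a_0(\zeta)$ by Lemma~\ref{lem:degree}; in the example this is $\zeta(y_2)\le a_0(\zeta)$.
\end{enumerate}
Each $b_i\le1-\eta$ gives $N\le(1-\eta)M$, hence $1+M^2-N-(1+\eta)M\ge(M-1)^2\ge0$. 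Integrating yields \eqref{eq:comparison}:
\[
 \eta A(v,\PP^j,0)\le(1+\eta)A(v,\PP^j,\Theta)+(k^2-1)e\,a_0(\zeta).
\]
This uses only $\Theta^+$; the negative part is restored at the end, so exact signed contact equality is not needed at this step.

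Feeding in your inductive bound on $e\,a_0(\zeta)$ then gives $L_j\le\eta^{-1}\{2+k^2L_{j-1}(k^2,\eta^2/(6k))\}$. The factor $k^2=\deg T$ is exactly the $+2$ in $b_j=2+2b_{j-1}+p_{j-1}$ and the additive $2$ in $A_j$. Your sketch leaves these as ``$+\ldots$'' precisely because this ingredient is missing.

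Your threshold split can be salvaged once $T$ is available. Below your threshold $t^*$ one has $M^2-N\ge(1-\eta/2)\eta/2\ge\eta/4$, so $t^*\le4\eta^{-1}\zeta(T)\le4\eta^{-1}k^2a_0(\zeta)$. Above $t^*$ your slope argument works, provided $\rho$ is redefined or the false upper bound on $A(v,\PP^j,0)$ is avoided.
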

\begin{proof}
In dimension one every divisorial valuation is a point valuation and has smooth discrepancy one. The assertion follows from sub-$\eta$-lc. In any dimension a valuation whose centre is a prime divisor is handled in the same way. If $\Theta^+=0$, the comparison is immediate. We may thus assume a centre of codimension at least two and a nonzero positive part.

Project from a point $q$ outside the centre and outside $\Supp\Theta$. The restriction of $v$ to the function field of the base is nontrivial: its centre projects into a proper subvariety of the base. Write it as $e\zeta$, and put $a_0(\zeta)=A(\zeta,\PP^{j-1},0)$. We first prove
\begin{equation}\label{eq:comparison}
 \eta A(v,\PP^j,0)
 \le(1+\eta)A(v,\PP^j,\Theta)
 +(k^2-1)e a_0(\zeta).
\end{equation}
This is the signed form of the calculation in \cite[Proposition~5.2]{Jiao}.

Write $\Theta^+=\sum_i b_iD_i$ with monic projected equations $G_i$ of degrees $d_i$. Their discriminants and pairwise resultants define the effective divisor
\[
 T=\sum_i b_i^2\Div(\operatorname{disc}G_i)
 +2\sum_{i<i'}b_i b_{i'}\Div(\operatorname{res}(G_i,G_{i'}))
\]
on the base. A degree-one discriminant is a nonzero constant. The respective degrees are $d_i(d_i-1)$ and $d_id_{i'}$, so $\deg T\le k^2$. In a splitting DVR its order is
\[
 Q=\zeta(T)=2\sum_{i<i'}b_i b_{i'}c_{ii'},
\]
now indexing distinct roots with their assigned coefficients.

Choose the maximal contact $\rho$ with $v$ as in Lemma~\ref{lem:root}. Let $M(t)$ be the sum of the positive root coefficients in its contact cluster at level $t$, and let $N(t)$ be the sum of their squares. Then
\[
 \frac{v(\Theta^+)}e=\int_0^\rho M(t)\,dt,
 \qquad Q\ge\int_0^\rho\bigl(M(t)^2-N(t)\bigr)\,dt.
\]
Every $b_i\le1-\eta$, since the original signed pair is sub-$\eta$-lc. Therefore $N(t)\le(1-\eta)M(t)$ and
\[
 1+M(t)^2-N(t)-(1+\eta)M(t)
 \ge(M(t)-1)^2\ge0.
\]
The local smooth discrepancy divided by $e$ is at least $1+\rho$. Integration gives
\[
 (1+\eta)A(v,\PP^1_{\OO_\zeta},\Theta^+)
 \ge\eta A(v,\PP^1_{\OO_\zeta},0)+e-eQ.
\]
Adding the base canonical contribution $(a_0(\zeta)-1)e$ gives
\[
 (1+\eta)A(v,\PP^j,\Theta^+)
 \ge\eta A(v,\PP^j,0)+e a_0(\zeta)-e\zeta(T).
\]
The calculation uses only the individual coefficient bounds, not log canonicity of $(\PP^j,\Theta^+)$. Restoring the negative part increases the left-hand discrepancy. Lemma~\ref{lem:degree} bounds $\zeta(T)$ by $k^2a_0(\zeta)$ and proves \eqref{eq:comparison}. Our choice of $q$ puts the centre in the finite chart; the same inequality also holds at infinity because the positive boundary order there is zero.

The surface estimate over every base valuation gives $a_\Theta\ge\gamma=\eta^2/(6k)$. Apply the absolute coefficient-ideal construction and Lemma~\ref{lem:realize} to realize $a_\Theta(\zeta)$ by a signed sub-$\gamma$-lc divisor of positive degree at most $k^2$ on $\PP^{j-1}$. If $L_{j-1}(k^2,\gamma)$ is the comparison factor in dimension $j-1$, then
\begin{equation}\label{eq:retainquotient}
 e a_0(\zeta)
 \le L_{j-1}(k^2,\gamma)e a_\Theta(\zeta)
 \le L_{j-1}(k^2,\gamma)A(v,\PP^j,\Theta).
\end{equation}
Substituting into \eqref{eq:comparison} yields
\[
 L_j(k,\eta)\le\eta^{-1}
 \{2+k^2L_{j-1}(k^2,\eta^2/(6k))\}.
\]
With $L_1=\eta^{-1}$, the exponent recurrences are
\[
 p_j=1+2p_{j-1},\qquad b_j=2+2b_{j-1}+p_{j-1}.
\]
They give \eqref{eq:absolconstants}; since $k\ge1$ and $\eta\le1$, the coefficient recursion there absorbs the additive term two. In particular $A_2=8$ and $A_3=1730$.
\end{proof}

\begin{remark}
The decisive point is \eqref{eq:retainquotient}. Bounding the restriction index and the base discrepancy separately loses powers of $\eta$. The realization lemma instead permits induction directly on the quotient. No condition $a_\Theta(\zeta)\le1$ is used. Negative boundaries are indispensable for this formulation.
\end{remark}

\section{Finite morphisms and terminal Fano fibres}\label{sec:finite}

A terminal Fano variety need not be rational. We therefore transfer the projective-space estimates through a finite morphism. The transfer must preserve the minimum of the normalized discrepancies of all extensions of a valuation. The characteristic coefficients, rather than just the norm, provide this information.

\subsection{A finite collection of signed divisors}
\begin{proposition}\label{prop:transfer}
Let $\pi:T\to Y$ be finite surjective of degree $N$, where $T$ is normal and $Y=\PP^r$ or $\PP^r_R$. Suppose $K_T+B\simq0$ and $(T,B)$ is sub-$\eta$-lc. Set
\[
 H=K_T-\pi^*K_Y+B.
\]
Assume that $H$ is effective in the absolute case, or that its horizontal part is effective in the relative case. There are finitely many signed divisors $\Theta_j$ on $Y$ such that
\begin{align}
 (Y,\Theta_j)&\text{ is sub-}(\eta/N)\text{-lc},\label{eq:trans1}\\
 \deg\Theta_j^+&\le(r+1)N,
 \quad\text{or }\degh\Theta_j^+\le(r+1)N,\label{eq:trans2}\\
 \min_j A(\zeta,Y,\Theta_j)
 &=\min_{v_i|_{K(Y)}=e_i\zeta}
 \frac{A(v_i,T,B)}{e_i}\label{eq:trans3}
\end{align}
for every normalized divisorial valuation $\zeta$ of $Y$.
\end{proposition}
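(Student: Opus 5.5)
The plan is to reduce \eqref{eq:trans3} to a statement about root orders of one characteristic polynomial, and then read off the $\Theta_j$ from \eqref{eq:maxroot}. By Lemma~\ref{lem:ramification}, for every extension $v_i$ of $\zeta$ with $v_i|_{K(Y)}=e_i\zeta$,
\[
 \frac{A(v_i,T,B)}{e_i}=A(\zeta,Y,0)-\frac{v_i(H)}{e_i}.
\]
So the right-hand side of \eqref{eq:trans3} equals $A(\zeta,Y,0)-\max_i v_i(H)/e_i$. It therefore suffices to produce finitely many $\Q$-divisors $G_j$ on $Y$ with $\max_j\zeta(G_j)=\max_i v_i(H)/e_i$ for every $\zeta$, and to put $\Theta_j=G_j$.

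\emph{The absolute case.} Since $K_T+B\simq0$, we have $H\simq\pi^*(-K_Y)=\pi^*\OO(r+1)$. Choose $m$ so that $mH$ is Cartier, integral and linearly equivalent to $\pi^*\OO(m(r+1))$. As $H$ is effective, $mH=\Div(\sigma)$ for a section $\sigma$ of $\pi^*\OO(m(r+1))$. Consider the characteristic polynomial of multiplication by $\sigma$ on $K(T)$ over $K(Y)$:
\[
 z^N+c_1z^{N-1}+\cdots+c_N .
\]
Its coefficients are integral over the normal variety $Y$ and homogeneous, so $c_a$ is a global section of $\OO(m(r+1)a)$. To see this, compute on the flat locus of $\pi$, which contains all codimension-one points of the smooth $Y$, and extend by normality. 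Moreover $c_N=\Nm(\sigma)\ne0$.

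Pass to a splitting field with an extension $w$ of $\zeta$, not renormalized. The roots are the conjugates of $\sigma$, and their $w$-orders, after dividing out the local trivialization, are exactly the numbers $v_i(mH)/e_i$ in the normalization $w|_{K(Y)}=\zeta$. Equation~\eqref{eq:maxroot} then gives
\[
 \max_i\frac{v_i(H)}{e_i}=\max_{j<N,\ c_j\ne0}\frac{\zeta(\Div c_N)-\zeta(\Div c_j)}{m(N-j)},
\]
where $c_0=1$. So define
\[
 \Theta_j=\frac{\Div c_N-\Div c_j}{m(N-j)}.
\]
This yields \eqref{eq:trans3}, with the identity holding simultaneously for all $\zeta$ because the $c_j$ are fixed global sections. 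For \eqref{eq:trans2}, note that $\Theta_j^+\le\Div(c_N)/(m(N-j))$, whose degree is at most $m(r+1)N/m=(r+1)N$.

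For \eqref{eq:trans1}, each $\zeta(\Theta_j)$ is at most the maximum. Hence
\[
 A(\zeta,Y,\Theta_j)\ge\min_i\frac{A(v_i,T,B)}{e_i}\ge\frac{\eta}{\max_i e_i}\ge\frac{\eta}{N},
\]
using that $(T,B)$ is sub-$\eta$-lc and that $e_i\le N$.

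\emph{The relative case.} Only the horizontal part of $H$ is assumed effective. Write $\pi^*Y_0=\Div(u)$ and choose $c\ge0$ with $H+c\,\pi^*Y_0$ effective. Apply the construction above to $H'=H+c\,\pi^*Y_0$, which is $\Q$-linearly equivalent to $\pi^*\OO(r+1)$ over $R$, and then set $\Theta_j=\Theta_j'-cY_0$. Since $v_i(\pi^*Y_0)/e_i=\zeta(u)$ for every extension, subtracting $cY_0$ exactly compensates, and \eqref{eq:trans3} persists. Vertical terms do not affect the horizontal degree, so the bound \eqref{eq:trans2} on $\degh\Theta_j^+$ follows from the horizontal degree of $\Div c_N$. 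The sub-$(\eta/N)$-lc argument is unchanged. The vertical fixed parts of $\Div c_j$ must be retained, in keeping with the conventions of Section~\ref{sec:projection}.

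\emph{Main obstacle.} The delicate step is justifying that the characteristic coefficients are honest global sections of the stated twists, and that their root orders compute $v_i(H)/e_i$ with the correct normalization at \emph{every} divisorial $\zeta$, including centres of high codimension and, over $R$, vertical centres. I would handle this by working with the minimal polynomial of $\sigma$ raised to the appropriate power, with integrality over the normal base, and by checking on a splitting DVR that the valuation of a conjugate of $\sigma$ under $w$ equals $v_i(\sigma)/e_i$ in the chosen normalization.
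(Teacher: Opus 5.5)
Your proposal is correct and follows essentially the same route as the paper. Both proofs take a section $s$ with $\Div(s)=m(H+\ell\pi^*Y_0)$ and the characteristic polynomial of multiplication by $s$ over $K(Y)$, read off $\Theta_j=(C_N-C_j)/(m(N-j))-\ell Y_0$ from \eqref{eq:maxroot}, and deduce \eqref{eq:trans1} from $e_i\le N$ and $A(v_i,T,B)\ge\eta$. The only minor difference is in showing the characteristic coefficients are regular: you use flatness in codimension one plus Hartogs extension, while the paper uses integrality of $s$ over the normal base ring, and both arguments are valid.
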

\begin{proof}
In the relative case choose a nonnegative rational $\ell$ for which $H+\ell\pi^*Y_0\ge0$; this is possible since there are finitely many vertical components and each occurs with positive multiplicity in $\pi^*Y_0$. Set $\ell=0$ absolutely. The divisor $H$ is $\Q$-Cartier and $H\simq-\pi^*K_Y$. The canonical line bundle of the local base $\Spec R$ is trivial. Consequently, for sufficiently divisible $m$ there is a section
\[
 s\in H^0\bigl(T,\pi^*\OO_Y((r+1)m)\bigr),\qquad
 \Div(s)=m(H+\ell\pi^*Y_0).
\]

In a local frame regard $s$ as an element of $K(T)$ and take the characteristic polynomial of multiplication by $s$ over $K(Y)$:
\[
 z^N+c_1z^{N-1}+\cdots+c_N.
\]
The coefficients are regular. Indeed, $s$ is integral over the normal base ring, the coefficients are integral over that ring, and they belong to its fraction field. They therefore lie in the base ring. Under a change of frame $c_j$ transforms as a section of $\OO_Y((r+1)mj)$. This argument does not require $\pi$ to be flat. Let $C_j=\Div(c_j)$ for a nonzero coefficient, and set $C_0=0$. Define
\begin{equation}\label{eq:thetaj}
 \Theta_j=\frac{C_N-C_j}{m(N-j)}-\ell Y_0
 \qquad(0\le j<N, c_j\ne0).
\end{equation}
Its positive degree is bounded by
\[
 \frac{(r+1)mN}{m(N-j)}\le(r+1)N.
\]
The same is true horizontally in the relative case. A larger choice of $\ell$ multiplies $c_j$ by a corresponding power of $u$; the changes cancel exactly in \eqref{eq:thetaj}.

Fix $\zeta$ and extend it to a splitting field without renormalizing. The orders of the characteristic roots are
\[
 m\left(\frac{v_i(H)}{e_i}+\ell\zeta(Y_0)\right),
\]
with the field-theoretic multiplicities. By Lemma~\ref{lem:polynomial}, the largest of these orders is recovered by \eqref{eq:maxroot}. Subtracting this largest order, divided by $m$, from $A(\zeta,Y,0)+\ell\zeta(Y_0)$ and using \eqref{eq:finite-ram} proves \eqref{eq:trans3}. For each normalized extension, $A(v_i,T,B)\ge\eta$ and $e_i\le N$. The minimum in \eqref{eq:trans3} is therefore at least $\eta/N$, proving \eqref{eq:trans1} for every member of the finite collection.
\end{proof}

\begin{remark}
The divisor realizing the minimum in \eqref{eq:trans3} can depend on $\zeta$, but the finite collection is fixed. A boundary formed from the norm alone would replace the largest root order by a sum or average, and would not give \eqref{eq:trans3}.
\end{remark}

\subsection{Uniform finite projections}
\begin{lemma}\label{lem:projections}
For each $r\ge1$ there are constants $V_r^{\mathrm{term}}>0$ and $N_r\in\Z_{>0}$ with the following properties. Every terminal Fano $r$-fold has anticanonical volume at most $V_r^{\mathrm{term}}$ and admits a finite morphism to $\PP^r$ of degree at most $N_r$. The finite-morphism assertion also holds over a function field over $\C$ if the variety is geometrically terminal Fano. One may take $N_1=2$.
\end{lemma}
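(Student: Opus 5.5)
The plan is to deduce the lemma from boundedness with a fixed parameter. Terminal Fano $r$-folds are $1$-lc, hence $\eps$-lc for any fixed $\eps<1$. By Birkar's theorem \cite{Bir19,Bir21}, or already by Kawamata's boundedness of terminal Fano varieties in the $\Q$-Gorenstein setting, they form a bounded family. Concretely, there is a projective morphism $\mathcal X\to S$ of finite type schemes over $\C$ and a relatively very ample divisor $\mathcal A$ such that every terminal Fano $r$-fold is isomorphic to some fibre $\mathcal X_s$. Moreover, there are an integer $m_r$ and a constant bounding $\mathcal A_s^r$.

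The volume bound $V_r^{\mathrm{term}}$ then comes from constructibility of intersection numbers in flat families. After stratifying $S$ so that $\mathcal X\to S$ is flat with normal fibres, and restricting to the locally closed locus where the fibre is terminal Fano (for which we use openness of the relevant conditions), there is an integer $m_r$ with $-m_rK_{\mathcal X_s}$ Cartier on every fibre. The number $(-K_{\mathcal X_s})^r$ is then locally constant on each stratum. Finitely many strata give the bound.

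For the finite morphisms, we may replace $\mathcal A$ by a fixed multiple, so that on each fibre $\mathcal A_s$ is very ample with $\mathcal A_s^r\le c_r$, with $c_r$ depending only on $r$. We embed $\mathcal X_s\subset\PP^M$ by $|\mathcal A_s|$ and take a general linear projection from a codimension-$(r+1)$ linear subspace disjoint from $\mathcal X_s$. This gives a finite morphism to $\PP^r$ of degree $\mathcal A_s^r\le c_r$, and we set $N_r=c_r$. Over a function field $K$ the same works: a geometrically terminal Fano variety $X_K$ has its geometric fibre in the family, and we need a very ample divisor of bounded degree defined over $K$. For this we use $-m_rK_{X_K}$ with $m_r$ fixed so that it is very ample by effective very ampleness in the bounded family, again uniform by finitely many strata. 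We then project from a general $K$-rational linear subspace disjoint from $X_K$; such subspaces exist because $K$ is infinite, and finiteness is checked geometrically. The degree is $m_r^r(-K)^r\le m_r^rV_r^{\mathrm{term}}$. For $r=1$ the only terminal Fano curve is a conic, here $\PP^1$ or a pointless conic over $K$. Projecting a plane conic from a $K$-point off it gives degree two, so $N_1=2$.

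The main obstacle I expect is the uniformity of $m_r$ with $-m_rK$ very ample, including over non-closed fields. I would try to get it first from boundedness by the standard noetherian-induction argument: Cartier index and very ampleness are constructible in the family. Then I would use that very ampleness descends along field extensions because $H^0$ commutes with flat base change.
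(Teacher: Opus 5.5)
Your overall route matches the paper's. You use boundedness via \cite{Bir21}, a uniform multiple of $-K$ that is Cartier and very ample, a general projection, descent of that line bundle to $K$, and a $K$-rational choice because $K$ is infinite. The conic gives $N_1=2$. Your very ample multiple with a linear projection is equivalent to the paper's free multiple with $r+1$ general sections.

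The gap is the step you yourself flag: the uniform Cartier index. It does not follow from a noetherian induction on the bounded family together with ``openness of the relevant conditions''.
\begin{itemize}
\item For each fixed $m$ the locus of $s$ with $mK_{\mathcal X_s}$ Cartier is constructible. But the $\Q$-Gorenstein locus is only the countable union of these loci.
\item To run the induction you would need the following: on every irreducible stratum in which terminal Fano fibres are dense, the generic fibre already has $\Q$-Cartier canonical divisor. Density does not give this.
\item Without a singularity hypothesis it is false. Take projective cones over a fixed curve $C$ of genus $g\ge2$ with respect to $L\in\mathrm{Pic}^d(C)$, $d\ge 2g+1$. They form a flat family of normal varieties. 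Their canonical divisor is $\Q$-Cartier exactly when $L^{\otimes(2g-2)}\otimes\omega_C^{\otimes(-d)}$ is torsion. This happens on a countable dense set of parameters, with unbounded index.
\item The same example shows that $\Q$-Gorensteinness of fibres is not an open condition. Openness of terminality in families needs $K_{\mathcal X/S}$ to be $\Q$-Cartier, which is essentially what you are trying to prove.
\end{itemize}

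So the klt (here terminal) hypothesis must enter in an essential way. The paper does not derive the index bound from boundedness. It invokes the separate result \cite[Theorem~1.2]{HJ25} for a uniform $I_r$, and then Koll\'ar's effective base point freeness \cite{Kol93} for a uniform free multiple $-\nu_rK_F$. Once such an $I_r$ is available, your stratification argument for a uniformly very ample multiple is fine, and the rest of your proposal goes through.

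The gap matters most for the function-field assertion, which is the one used in Proposition~\ref{prop:discriminant}. Over $\C$ the index could be avoided: your projection via $\mathcal A_s$ needs none, and the volume can be bounded by comparing $-K_{\mathcal X_s}$ with a uniform multiple of $\mathcal A_s$. Over $K$, however, you need a polarization defined over $K$, and a multiple of $-K$ is the natural choice.

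Three smaller points:
\begin{itemize}
\item Placing $X_{\bar K}$ ``in the family'' needs a Lefschetz-principle step, since the family parametrizes complex varieties. Descend to a field finitely generated over $\Q$ and embed it into $\C$, as the paper does.
\item Kawamata's boundedness theorem concerns threefolds, so for general $r$ you need Birkar's theorem.
\item $N_r$ should be the maximum of your two degree bounds.
\end{itemize}
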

\begin{proof}
Terminal Fano $r$-folds are $1$-lc and form a bounded family by \cite[Theorem~1.1]{Bir21}. Their volumes are bounded. The Cartier indices are uniformly bounded by \cite[Theorem~1.2]{HJ25}. Replacing the bound by a common multiple, choose $I_r$ for which $-I_rK_F$ is Cartier for every member. Effective base point freeness \cite{Kol93}, applied to this ample divisor, gives a uniform integer $\nu_r$ such that $L=-\nu_rK_F$ is Cartier, ample, and globally generated. Thus $L^r\le\nu_r^r V_r^{\mathrm{term}}$.

Choose $r+1$ sufficiently general sections of $L$ with no common zero. Such tuples exist: at a fixed point their simultaneous vanishing has codimension $r+1$ in the space of tuples, whereas the variety has dimension $r$. The resulting morphism to $\PP^r$ is finite, since its pullback of $\OO(1)$ is ample. Its degree is $L^r$.

The same geometric bounds apply over a function field over $\C$. Indeed, descend a projective presentation and a log resolution to a field finitely generated over $\Q$ and embed that field into $\C$. Geometric terminality, the Fano condition, the Cartier property of a fixed canonical multiple, generation, and intersection numbers are unchanged after extension between algebraically closed fields of characteristic zero. The complex case therefore supplies the same integers $I_r,\nu_r$ and the same degree bound on the geometric generic fibre. The line bundle $-\nu_rK_F$ is defined over the original field. Cartierness and global generation descend along a field extension, and global sections commute with that extension. Since the original field is infinite, the nonempty open set of suitable tuples has a rational point. This gives the finite morphism over that field. A smooth genus-zero curve has a globally generated anticanonical line bundle of degree two, proving the last assertion.
\end{proof}

\begin{proposition}[Terminal interpolation]\label{prop:terminalinterp}
There is $t_r>0$, depending only on $r$, such that for effective anticanonical divisors $B,D$ on a terminal Fano $r$-fold, with $(F,B)$ $\eta$-lc,
\[
 \bigl(F,(1-t)B+tD\bigr)\text{ is lc for }0\le t\le t_r\eta^{p_r}.
\]
One possible choice is
\begin{equation}\label{eq:tr}
 t_r=\left[1+(k_r-1)A_r k_r^{b_r}N_r^{p_r}\right]^{-1},
 \qquad k_r=(r+1)N_r.
\end{equation}
\end{proposition}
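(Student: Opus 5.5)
The plan is to reduce lc of the interpolated pair to a single valuative inequality, and then prove that inequality by pushing both pairs to $\PP^r$ along the finite morphism of Lemma~\ref{lem:projections} and applying Theorem~\ref{thm:absolute} to the collection of divisors from Proposition~\ref{prop:transfer}.

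\emph{Step 1: reduction.} Fix a divisorial valuation $v$ over $F$. Log discrepancy is affine in the boundary, so
\[
 A\bigl(v,F,(1-t)B+tD\bigr)=(1-t)A(v,F,B)+tA(v,F,D).
\]
Since $D\ge0$, it suffices to show $A(v,F,D)\ge-\lambda A(v,F,B)$ with $\lambda=(k_r-1)A_rk_r^{b_r}N_r^{p_r}\eta^{-p_r}$. Given this, the pair is lc for $t\le(1+\lambda)^{-1}$, and this range contains $t\le t_r\eta^{p_r}$ because $\eta\le1$.

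\emph{Step 2: transfer to $\PP^r$.} Take the finite morphism $\pi:F\to\PP^r$ of degree $N\le N_r$ from Lemma~\ref{lem:projections}. Set $H_B=K_F-\pi^*K_{\PP^r}+B$ and define $H_D$ in the same way. The ramification divisor $K_F-\pi^*K_{\PP^r}$ is effective, so $H_B,H_D\ge0$. Write $v|_{K(\PP^r)}=e\zeta$. By Lemma~\ref{lem:ramification},
\[
 A(v,F,D)=eA(\zeta,\PP^r,0)-v(H_D).
\]
The plan is to control both terms on the right by $A(v,F,B)$.

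\emph{Step 3: the term $eA(\zeta,\PP^r,0)$.} Apply Proposition~\ref{prop:transfer} to $(F,B)$. It gives signed divisors $\Theta_j$ that are sub-$\eta/N$-lc, have $\deg\Theta_j^+\le k_r$, and satisfy $\min_jA(\zeta,\PP^r,\Theta_j)\le A(v,F,B)/e$. Choose $j$ attaining this minimum and apply Theorem~\ref{thm:absolute} to $\Theta_j$. This yields
\[
 eA(\zeta,\PP^r,0)\le A_rk_r^{b_r}(N/\eta)^{p_r}A(v,F,B).
\]

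\emph{Step 4: the term $v(H_D)$.} This is the main obstacle. A norm of $H_D$ does not directly bound $v(H_D)$, since the norm records a sum over the extensions rather than the order at $v$. I would reuse the characteristic-polynomial device of Proposition~\ref{prop:transfer}, now for $D$ in place of $B$. Choose $m$ with $mH_D\ge0$ integral, and a section $s$ with $\Div(s)=mH_D$; then $s$ is a section of $\pi^*\OO((r+1)m)$. Every root of its characteristic polynomial has order at most the largest one, so \eqref{eq:maxroot} together with Lemma~\ref{lem:degree} gives
\[
 \frac{v(H_D)}{e}\le\max_i\frac{v_i(H_D)}{e_i}\le\frac{\deg\Div(c_N)-\deg\Div(c_j)}{m(N-j)}\cdot A(\zeta,\PP^r,0)\le k_rA(\zeta,\PP^r,0).
\]
Here I use that $\Div(c_N)-\Div(c_j)$ is only signed, so one applies Lemma~\ref{lem:degree} to its positive part of degree at most $(r+1)mN$.

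\emph{Step 5: combine.} Steps 2 and 4 give $A(v,F,D)\ge-(k_r-1)eA(\zeta,\PP^r,0)$, and Step 3 then gives exactly the constant $\lambda$ of Step 1.

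This is less clean than it looks. A more careful route writes $v(H_D)$ as $v(\pi^*T)$ for an effective $T$ of degree $k_r$ through which $v$'s order can be compared. The factor $(k_r-1)$ in \eqref{eq:tr} strongly suggests that this is the intended estimate, so I would aim precisely for the inequality $v(H_D)\le k_r\,eA(\zeta,\PP^r,0)$ and treat securing it as the crux.
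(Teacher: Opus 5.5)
Your proof is correct and follows the paper's argument. The affine reduction, the bound $eA(\zeta,\PP^r,0)\le A_rk_r^{b_r}(N/\eta)^{p_r}A(v,F,B)$ obtained from Proposition~\ref{prop:transfer} and Theorem~\ref{thm:absolute} applied to the minimizing $\Theta_j$, and the combination of $v(H_D)/e\le(r+1)N\,A(\zeta,\PP^r,0)$ with \eqref{eq:finite-ram} are exactly the paper's steps.

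Step~4, which you call the main obstacle, is easier than you fear. Since $H_D\ge0$, every characteristic root order $m\,v_i(H_D)/e_i$ is nonnegative. So the single order $m\,v(H_D)/e$ is at most their sum $\zeta(C_N)$, the order of the effective norm divisor of degree $(r+1)mN$, and Lemma~\ref{lem:degree} finishes at once; this is what the paper does. Your max-root variant also works. However, the middle term of your display should be the degree of the positive part of $(C_N-C_j)/(m(N-j))$, as your next sentence indicates. The difference of degrees you actually wrote equals $r+1$, and Lemma~\ref{lem:degree} does not control it. With that correction, the ``more careful route'' in your last paragraph is not needed.
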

\begin{proof}
Take a finite morphism $\pi:F\to\PP^r$ of degree $N\le N_r$. The ramification divisor $R_\pi$ is effective, so $H_B=R_\pi+B$ and $H_D=R_\pi+D$ are effective. For a normalized valuation $v$ restricting to $e\zeta$, Proposition~\ref{prop:transfer} and Theorem~\ref{thm:absolute} give
\begin{equation}\label{eq:terminalgood}
 eA(\zeta,\PP^r,0)\le L A(v,F,B),\qquad
 L=A_r((r+1)N)^{b_r}(\eta/N)^{-p_r}.
\end{equation}
To see the direction of the inequality, choose the signed divisor minimizing the target discrepancy at $\zeta$. Its value is at most $A(v,F,B)/e$, and the absolute comparison applies to that divisor.

For a Cartier multiple of $H_D$, its norm is an effective divisor of degree $(r+1)mN$ on $\PP^r$. All characteristic root orders are nonnegative, so the order $mv(H_D)/e$ of any one root is at most the order of the norm. Lemma~\ref{lem:degree} yields
\[
 v(H_D)/e\le(r+1)N A(\zeta,\PP^r,0).
\]
By \eqref{eq:finite-ram} and \eqref{eq:terminalgood},
\[
 A(v,F,D)\ge-((r+1)N-1)L A(v,F,B).
\]
Discrepancy is affine in the boundary. Hence interpolation is lc for $t\le[1+((r+1)N-1)L]^{-1}$. Since $N\le N_r$ and $\eta\le1$, \eqref{eq:tr} is a valid uniform choice.
\end{proof}

\subsection{A discriminant bound over every base valuation}
\begin{proposition}\label{prop:discriminant}
Let $f:W\to Z$ be a contraction whose geometric generic fibre is a terminal Fano $r$-fold. Suppose
\[
 B\ge0,\qquad K_W+B\simq0,\qquad (W,B)\text{ is }\eta\text{-lc}.
\]
For the function in \eqref{eq:discrdef}, one has
\begin{equation}\label{eq:discriminant}
 a_B(\zeta)\ge d_r\eta^{2^r},\qquad
 d_r=\frac{1}{6^{2^r-1}(r+1)^{r2^{r-1}}N_r^{(r+2)2^{r-1}}},
\end{equation}
for every normalized divisorial valuation of $K(Z)$.
\end{proposition}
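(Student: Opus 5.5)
The plan is to localize at $\zeta$ and then chain Proposition~\ref{prop:transfer} in its relative form with Theorem~\ref{thm:relative}. The exponent $2^r$ and the shape of $d_r$ point to exactly this combination: Theorem~\ref{thm:relative} with $\eta$ replaced by $\eta/N_r$ and $k=(r+1)N_r$ gives
\[
 \frac{(\eta/N_r)^{2^r}}{6^{2^r-1}((r+1)N_r)^{r2^{r-1}}}
 =\frac{\eta^{2^r}}{6^{2^r-1}(r+1)^{r2^{r-1}}N_r^{2^r+r2^{r-1}}},
\]
and $2^r+r2^{r-1}=(r+2)2^{r-1}$. This matches $d_r$ exactly.

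\textbf{Localization.} Fix a normalized divisorial valuation $\zeta$ of $K(Z)$ and let $R$ be its geometric DVR, with uniformizer $u$ and $\zeta(u)=1$.
\begin{itemize}
\item Base change $f$ to $\Spec R$, keeping the crepant boundary, including the canonical correction from the base as described in Section~\ref{sec:prelim}. The discrepancies $A(v,W,B)$ of valuations $v$ restricting to $e\zeta$ are then computed on $W_R$.
\item Such $v$ are vertical, with $v(u)=e$. So $a_B(\zeta)$ is the infimum of $A(v,W_R,B_R)/v(u)$ over vertical $v$.
\item The generic fibre $W_K$ is geometrically terminal Fano over the function field $K$. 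Lemma~\ref{lem:projections} then gives a finite morphism $W_K\to\PP^r_K$ of degree $N\le N_r$, defined by $r+1$ sections of $-\nu_rK$.
\item Spread this out. Take the normalization of $\PP^r_R$ in $K(W)$, which I call $T$. It is birational to $W_R$ and finite over $Y=\PP^r_R$.
\end{itemize}

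\textbf{Transfer and relative estimate.}
\begin{itemize}
\item Carry $B$ to $T$ crepantly as the sub-boundary $B_T$, with $K_T+B_T\simq0$. Since $(W,B)$ is $\eta$-lc, $(T,B_T)$ is sub-$\eta$-lc: both models have the same valuations, and crepant transfer preserves discrepancies (Lemma~\ref{lem:book}(2)).
\item Set $H=K_T-\pi^*K_Y+B_T$. Its horizontal part is effective, because on the generic fibre it is the ramification divisor plus $B\ge0$. Only vertical components may be negative, which is exactly the relative hypothesis of Proposition~\ref{prop:transfer}.
\item Proposition~\ref{prop:transfer} gives finitely many signed sub-$(\eta/N)$-lc divisors $\Theta_j$ on $\PP^r_R$ with $\degh\Theta_j^+\le(r+1)N$.
\item For every vertical valuation $\xi$ of $Y$, \eqref{eq:trans3} expresses $\min_j A(\xi,Y,\Theta_j)$ as the minimum of $A(v_i,T,B_T)/e_i$ over extensions.
\item Apply Theorem~\ref{thm:relative} to each $\Theta_j$. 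This gives $A(\xi,Y,\Theta_j)\ge c\,\xi(u)$, with $c$ as above for the bound $N$ in place of $N_r$.
\item For any vertical $v$ on $T$, let $\xi$ be its restriction, so $v|_{K(Y)}=e'\xi$ with $v(u)=e'\xi(u)$. Then
\[
 A(v,T,B_T)\ge e'\min_jA(\xi,Y,\Theta_j)\ge c\,e'\xi(u)=c\,v(u).
\]
\item The bound for $N$ dominates the bound for $N_r$, since $N\le N_r$ and $\eta\le1$. This proves \eqref{eq:discriminant}.
\end{itemize}

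\textbf{Main obstacle.} The hardest step is the spreading-out. One must check that the normalization $T$, the crepant sub-boundary and the localized discrepancies really compute the same numbers as on $W$ for all valuations over $\zeta$, including vertical fixed divisors and the base canonical correction. The paper stresses that dropping vertical coefficients would break this.

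I would handle it by working birationally. Discrepancies depend only on the function field together with the crepant class of $K+B$, and $T$ and $W_R$ are birational over $R$ with $K+B\simq0$ crepant on both. The hypotheses of Proposition~\ref{prop:transfer} are also met: the vertical part of $H$ is absorbed by the shift $\ell\pi^*Y_0$, and $K_T+B_T$ is $\Q$-Cartier. With these checked, Proposition~\ref{prop:transfer} and Theorem~\ref{thm:relative} apply unchanged.
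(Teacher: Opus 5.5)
Your proposal is correct and follows the paper's proof essentially step by step. Like the paper, you localize at the geometric DVR of $\zeta$, use Lemma~\ref{lem:projections} for a finite projection of the generic fibre, normalize $\PP^r_R$ in $K(W)$, transfer $B$ crepantly, and apply Proposition~\ref{prop:transfer} in its relative form and then Theorem~\ref{thm:relative}, with the restriction index cancelling because $v(u)=e\xi(u)$. One small correction: Lemma~\ref{lem:book}(2) is stated for birational contractions, so it is not the right justification for the crepant transfer to $T$; as in the paper, define $B_T$ using a single rational function representing a multiple of $K_W+B\simq0$ on both birational models.
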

\begin{proof}
Represent $\zeta$ on a smooth base model and localize at its geometric DVR $R$. Lemma~\ref{lem:projections} gives a finite projection of the generic fibre to $\PP^r_K$, of degree $N\le N_r$. Normalize $\PP^r_R$ in $K(W)$ and denote the resulting model by $T$. This normalization is finite: $R$ is essentially of finite type over $\C$, so the usual finiteness of normalization in a finite function-field extension applies. Its generic fibre is the given Fano fibre, because that fibre is already normal and finite over $\PP^r_K$.

Use a rational function representing a multiple of $K_W+B\simq0$ to define the crepant signed boundary $B_T$ on $T$. All its discrepancies are the original ones, so the pair is sub-$\eta$-lc. Its horizontal part is the effective generic boundary, though the vertical part can be negative. Horizontal ramification is effective as well. Proposition~\ref{prop:transfer} applies.

For a valuation $v$ on $K(W)$, write its restriction to $K(\PP^r_R)$ as $e\xi$. Properness supplies centres on the finite model. Choose the minimizing member of the transferred collection at $\xi$. By \eqref{eq:trans3} and Theorem~\ref{thm:relative},
\[
 \frac{A(v,W,B)}{v(u)}
 \ge\frac{(\eta/N)^{2^r}}
 {6^{2^r-1}((r+1)N)^{r2^{r-1}}}.
\]
Indeed, $v(u)=e\xi(u)$, so the restriction index cancels. The exponent of $N$ is $2^r+r2^{r-1}=(r+2)2^{r-1}$. Taking the infimum over all $v$ lying above the original base valuation gives \eqref{eq:discriminant}. The argument works for exceptional base valuations as well as divisors on $Z$ itself.
\end{proof}

\section{Anticanonical interpolation in arbitrary dimension}\label{sec:interpolation}

\subsection{Approximating nef moduli parts}
The ordinary interpolation theorem must be applied to generalized pairs on the base. The following approximation is formulated for a fixed valuation; its parameter remains uniform for all valuations.

\begin{lemma}\label{lem:moduli}
Let $Z$ be $\Q$-factorial, projective and of Fano type. For $i=0,1$, suppose
\[
 B_i\ge0,\qquad K_Z+B_i+M_{i,Z}\simq0,
\]
where $\bM_i$ are $\Q$-b-Cartier and b-nef. Assume the generalized pair with index zero is generalized $\delta$-lc, $0<\delta\le1$; no singularity assumption is imposed for index one. Fix a normalized divisorial valuation $\zeta$. There are effective $\Q$-divisors $\Gamma_{i,j}\simq-K_Z$ such that
\[
 (Z,\Gamma_{0,j})\text{ is }\delta/2\text{-lc},
 \qquad A(\zeta,Z,\Gamma_{i,j})\longrightarrow
 A(\zeta,Z,B_i+\bM_i).
\]
\end{lemma}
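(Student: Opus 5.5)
Fix $i$ and a model $p:U\to Z$, which we take to be a log resolution on which $\bM_i$ descends (so $M_{i,U}$ is nef and $\bM_i$ is its pullback on higher models), on which the prime $E$ representing $\zeta$ appears, and on which $\Supp p_*^{-1}B_i\cup\operatorname{Exc}(p)\cup E$ is simple normal crossing. We may take one such $U$ for both indices. Write
\[
 K_U+B_{i,U}+M_{i,U}=p^*(K_Z+B_i+M_{i,Z}).
\]
Then $A(\zeta,Z,B_i+\bM_i)=1-\coef_E B_{i,U}$. The plan is to replace the nef class $M_{i,U}$ by an effective divisor which is small along $E$, and then push forward to $Z$.

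\emph{Step 1: perturb the nef class to an ample one.} Since $Z$ is of Fano type, $-K_Z$ is big. Fix an effective $\Q$-divisor $G$ on $U$, $p$-exceptional plus a pullback, with $-G$ relatively ample over $Z$. For small rational $\tau>0$ consider
\[
 M_{i,U}+\tau\bigl(p^*(-K_Z)-G\bigr).
\]
The summand $p^*(-K_Z)$ is big and nef rather than ample, so first write $-K_Z\simq A_Z+F$ with $A_Z$ ample and $F\ge0$ (Kodaira's lemma). Then $p^*A_Z-\epsilon G$ is ample on $U$ for small $\epsilon$, and $M_{i,U}+\tau(p^*A_Z-\epsilon G)$ is ample. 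Choose $m$ divisible enough that $m$ times this class is very ample, and let $S_{i,\tau}$ be $1/m$ times a general member. It is transverse to the snc support and does not contain $E$, so $\coef_E S_{i,\tau}=0$, and its coefficient $1/m$ is at most $\delta/4$.

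\emph{Step 2: define the boundary.} Set
\[
 \Gamma_{i,\tau}=(1-\tau)\,\bigl(B_i+p_*S_{i,\tau}\bigr)+\tau\,\bigl(F+\text{(a fixed klt-type anticanonical correction)}\bigr).
\]
The precise arrangement is chosen so that $\Gamma_{i,\tau}\simq-K_Z$ and $\Gamma_{i,\tau}\ge0$. Concretely, it is $p_*$ of $B_{i,U}+S_{i,\tau}+\tau p^*F+\tau\epsilon G$ minus the exceptional terms, and the exceptional terms are absorbed crepantly. Its crepant pullback to $U$ is
\[
 B_{i,U}+S_{i,\tau}+\tau\bigl(p^*F+\epsilon G\bigr),
\]
up to exceptional divisors, and these agree by the negativity lemma (Lemma~\ref{lem:book}(2)) because both sides are crepant to $\simq0$ data with the same pushforward. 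Hence
\[
 A(\zeta,Z,\Gamma_{i,\tau})=A(\zeta,Z,B_i+\bM_i)-\tau\,\coef_E\bigl(p^*F+\epsilon G\bigr),
\]
which tends to the generalized discrepancy as $\tau\to0$. Taking $\tau=1/j$ gives $\Gamma_{i,j}$.

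\emph{Step 3: singularities for $i=0$.} On $U$ the crepant boundary of $(Z,\Gamma_{0,j})$ is $B_{0,U}+S_{0,\tau}+O(\tau)$ with snc support. The coefficients of $B_{0,U}$ are at most $1-\delta$ by generalized $\delta$-lc, the new general component has coefficient at most $\delta/4$, and the $O(\tau)$ part has coefficients at most $\delta/4$ for $\tau$ small. Lemma~\ref{lem:snc} then gives $\delta/2$-lc on every valuation, not only on divisors of $U$.

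\emph{Main obstacle.} The delicate point is Step 2. One must show that the $\tau$-correction can be chosen effective and $\simq-K_Z$ on $Z$ while keeping the pullback identity exact. This is where one needs that $p^*(-K_Z)$ differs from an ample class only by an effective divisor, and that the negative exceptional part $-\epsilon\tau G$ cancels against a crepant term. I would first try $G$ $p$-exceptional and anti-ample over $Z$, which exists since $Z$ is $\Q$-factorial, and then absorb $\tau\epsilon G$ into the exceptional part of the crepant pullback.
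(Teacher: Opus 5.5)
Your Step~2 does not produce an anticanonical divisor, and this is exactly the point you left open. Since $S_{i,\tau}\simq M_{i,U}+\tau(p^*A_Z-\epsilon G)$, the divisor you propose as the crepant trace satisfies
\[
 K_U+B_{i,U}+S_{i,\tau}+\tau(p^*F+\epsilon G)\simq p^*(K_Z+B_i+M_{i,Z})+\tau p^*(A_Z+F)\simq-\tau p^*K_Z ,
\]
which is not $\simq0$. Pushing forward (with $G$ exceptional) gives $\Gamma_{i,\tau}=B_i+p_*S_{i,\tau}+\tau F\simq-(1+\tau)K_Z$. Your trace identity and your formula for $A(\zeta,Z,\Gamma_{i,\tau})$ do hold for this divisor: both sides are $\simq-\tau p^*K_Z$ and differ by an exceptional divisor, so negativity applies. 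But the lemma requires $\Gamma_{i,j}\simq-K_Z$, and that is what allows the ordinary interpolation theorem to be applied in Corollary~\ref{cor:generalized}. The term $\tau\epsilon G$ you worry about is harmless. The real defect is that the ample class $\tau A_Z$, together with $\tau F$, is added on top of $M_{i,U}$ with no compensation. Your first formula, with factors $1-\tau$ and an unspecified correction, is inconsistent with the concrete version and does not resolve this.

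The missing idea is to pay for the perturbation by shrinking the generalized pair, i.e.\ to take a convex combination with a log Fano pair. Choose a klt $\Delta$ with $A=-(K_Z+\Delta)$ ample, include $\Supp\Delta$ in the snc resolution, and write $K_U+\Delta_U=p^*(K_Z+\Delta)$. Take $\epsilon$ with $p^*A-\epsilon G$ ample. For $0<s<\tau\epsilon$ the class
\[
(1-\tau)M_{i,U}+\tau p^*A-sG=(1-\tau)M_{i,U}+(s/\epsilon)(p^*A-\epsilon G)+(\tau-s/\epsilon)p^*A
\]
is then ample. Let $S$ be a general representative of it with small coefficients, avoiding $E$, and set $\Gamma=(1-\tau)B_i+\tau\Delta+p_*S\ge0$. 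Now
\[
 K_U+(1-\tau)B_{i,U}+\tau\Delta_U+sG+S\simq(1-\tau)p^*(K_Z+B_i+M_{i,Z})+\tau p^*(K_Z+\Delta+A)\simq0 .
\]
So negativity identifies this with the crepant trace of $\Gamma$, gives $\Gamma\simq-K_Z$, and yields
\[
A(\zeta,Z,\Gamma)=(1-\tau)A(\zeta,Z,B_i+\bM_i)+\tau A(\zeta,Z,\Delta)-s\coef_E G,
\]
which tends to the generalized log discrepancy as $\tau,s\to0$. Your Step~3 then works. For $i=0$ the coefficients of $(1-\tau)B_{0,U}+\tau\Delta_U$ are at most $1-(1-\tau)\delta\le1-3\delta/4$ when $\tau\le1/4$, and adding $sG$ and $S$ with coefficients at most $\delta/8$ keeps everything at most $1-\delta/2$. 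This is the paper's argument. Replacing your $\Gamma_{i,\tau}$ by $\Gamma_{i,\tau}/(1+\tau)$, which is a convex combination with the klt pair $(Z,0)$, would also repair it. A minor point: $-K_Z$ need not be nef on a variety of Fano type (e.g.\ $\mathbb F_m$ with $m\ge3$); only its bigness is used.
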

\begin{proof}
Choose a klt log Fano boundary $\Delta$ on $Z$, and set $A=-(K_Z+\Delta)$. Take a common smooth model $p:U\to Z$ representing $\zeta$, carrying nef traces $M'_i$, and resolving the relevant boundaries. Write
\[
 K_U+B'_i+M'_i=p^*(K_Z+B_i+M_{i,Z}),\qquad
 K_U+\Delta'=p^*(K_Z+\Delta).
\]
We may dominate the model by a projective resolution obtained by blowups. There is an effective exceptional divisor $E$ with $-E$ relatively ample. For sufficiently small rational $e>0$, $p^*A-eE$ is ample. If $p$ is the identity, take $E=0$.

For rational $0<\lambda\le1/4$ and $0<s<\lambda e$, the class
\begin{equation}\label{eq:moduliclass}
 (1-\lambda)M'_i+\lambda p^*A-sE
\end{equation}
is ample. It is a sum of the nef divisor $(1-\lambda)M'_i$, the ample divisor $(s/e)(p^*A-eE)$, and the nef divisor $(\lambda-s/e)p^*A$. Choose a general effective $\Q$-representative $G_i$ of this class, with arbitrarily small coefficients. It may be taken transverse to the fixed support and not containing the prime representing $\zeta$. Put
\[
 \Gamma_i=(1-\lambda)B_i+\lambda\Delta+p_*G_i\ge0.
\]
Its crepant trace on $U$ is exactly
\begin{equation}\label{eq:modulitrace}
 (1-\lambda)B'_i+\lambda\Delta'+sE+G_i.
\end{equation}
To justify exactness, the log canonical divisor of \eqref{eq:modulitrace} is $\Q$-linearly trivial by \eqref{eq:moduliclass}. Represent a divisible multiple by a principal divisor, push it down to $Z$, and pull it back. The same rational function proves equality of the divisors, as in Lemma~\ref{lem:book}(2). It also proves $\Gamma_i\simq-K_Z$.

Every coefficient of $(1-\lambda)B'_0+\lambda\Delta'$ is at most $1-(1-\lambda)\delta\le1-3\delta/4$. Choose $s$ small enough that each coefficient of $sE$ is at most $\delta/8$, and choose the coefficients of $G_0$ at most $\delta/8$. All old coefficients stay at most $1-\delta/2$, the new ones satisfy the same bound, and the support is simple normal crossing. Lemma~\ref{lem:snc} gives the global $\delta/2$-lc conclusion.

Since $G_i$ avoids the prime representing $\zeta$, \eqref{eq:modulitrace} gives
\[
 A(\zeta,Z,\Gamma_i)
 =(1-\lambda)A(\zeta,Z,B_i+\bM_i)
 +\lambda A(\zeta,Z,\Delta)-s\coef_\zeta E.
\]
Let $\lambda$ and $s$ tend to zero rationally with $0<s<\lambda e$. This gives the required sequences.
\end{proof}

\begin{corollary}\label{cor:generalized}
Assume Theorem~\ref{thm:interp} in dimension $s=\dim Z$. In the setting of Lemma~\ref{lem:moduli}, write $a_i(\zeta)=A(\zeta,Z,B_i+\bM_i)$. Then
\begin{equation}\label{eq:geninterp}
 (1-\sigma)a_0(\zeta)+\sigma a_1(\zeta)\ge0,
 \qquad \sigma=c_s(\delta/2)^{p_s},
\end{equation}
for every normalized divisorial valuation of $Z$.
\end{corollary}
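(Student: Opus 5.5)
The plan is to fix $\zeta$ and apply Lemma~\ref{lem:moduli}, which gives sequences $\Gamma_{0,j},\Gamma_{1,j}$ of effective anticanonical $\Q$-divisors on $Z$ with $(Z,\Gamma_{0,j})$ $\delta/2$-lc and
\[
A(\zeta,Z,\Gamma_{i,j})\to a_i(\zeta).
\]
Before applying the lemma, I will note that the hypotheses of Theorem~\ref{thm:interp} in dimension $s$ hold for each $j$. Indeed, $Z$ is projective, $\Q$-factorial (hence $\Q$-Gorenstein) and of Fano type. Moreover $\Gamma_{0,j}\simq\Gamma_{1,j}\simq-K_Z$ are effective, and $(Z,\Gamma_{0,j})$ is $\delta/2$-lc. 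Since $0<\delta/2\le1$, the theorem is applicable with parameter $\eps=\delta/2$.

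By the assumed Theorem~\ref{thm:interp}, the pair $(Z,(1-\sigma)\Gamma_{0,j}+\sigma\Gamma_{1,j})$ is lc for $\sigma=c_s(\delta/2)^{p_s}$. Log discrepancy is affine in the boundary, because the crepant pullback is linear in the boundary once $K_Z$ is fixed. Together with lc at $\zeta$, this gives
\[
(1-\sigma)A(\zeta,Z,\Gamma_{0,j})+\sigma A(\zeta,Z,\Gamma_{1,j})=A\bigl(\zeta,Z,(1-\sigma)\Gamma_{0,j}+\sigma\Gamma_{1,j}\bigr)\ge0.
\]
Letting $j\to\infty$ and using the convergence from Lemma~\ref{lem:moduli} yields \eqref{eq:geninterp}. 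Since $\sigma$ depends only on $s$ and $\delta$, and not on $\zeta$ or $j$, the inequality holds for every normalized divisorial valuation.

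The only points needing care are bookkeeping. First, one must have $\sigma\le1$, so that both weights are nonnegative and the interpolated boundary is effective. This holds if $c_s\le1$, which may be assumed after shrinking $c_s$. Second, the uniformity of $\sigma$ must be kept, which the argument above does. The substantive work is all contained in Lemma~\ref{lem:moduli}, so I expect no real obstacle here.
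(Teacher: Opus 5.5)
Your proposal is correct and is the paper's argument: fix $\zeta$, apply Theorem~\ref{thm:interp} with parameter $\delta/2$ to each approximating pair $\Gamma_{0,j},\Gamma_{1,j}$ from Lemma~\ref{lem:moduli}, use affine linearity at $\zeta$, and pass to the limit with the uniform $\sigma$. Your remark that $\sigma\le1$ is consistent with the paper, whose proof of Theorem~\ref{thm:interp} shrinks all $c_n$ to at most $1/4$.
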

\begin{proof}
Fix $\zeta$, apply ordinary interpolation to the two divisors in each approximation from Lemma~\ref{lem:moduli}, and pass to the limit. The same number $\sigma$ works for every approximation and for every $\zeta$.
\end{proof}

\begin{remark}
The representatives in Lemma~\ref{lem:moduli} are allowed to depend on the prescribed valuation. Corollary~\ref{cor:generalized} first fixes the universal interpolation parameter, then chooses the representatives and takes the limit at that valuation. It does not assert simultaneous realization of the generalized discrepancies by a single ordinary boundary. Only b-nefness is used.
\end{remark}

\subsection{The dimension induction}
\begin{proof}[Proof of Theorem~\ref{thm:interp}]
We first work on $\Q$-factorial varieties. Decrease all constants to be at most $1/4$. In dimension one the variety is $\PP^1$. An effective anticanonical divisor has degree two, so every coefficient of $D$ is at most two. Every coefficient of $B$ is at most $1-\eta$. Thus $t\le\eta/4$ suffices, and we take $c_1=1/4$.

Assume the theorem in smaller dimensions, and write $\eta$ for the current singularity parameter. Take the terminalization in Lemma~\ref{lem:terminalization}. Its two effective anticanonical boundaries are crepant to $(X,B)$ and $(X,D)$. Run a $K_Y$-MMP. The underlying varieties remain terminal, and Fano type gives termination. Since $-K_Y$ is big, the output is a terminal Mori fibre space $f:W\to Z$. Both pairs remain effective and crepant throughout, by Lemma~\ref{lem:book}. The valuation statement consequently continues to test divisors contracted during the MMP.

If $Z$ is a point, apply Proposition~\ref{prop:terminalinterp}. Otherwise put
\[
 r=\dim W-\dim Z,\qquad s=\dim Z,
 \qquad r,s>0.
\]
The base is $\Q$-factorial and of Fano type. The geometric generic fibre is terminal Fano. Restrict a log resolution over a smooth open subset of the base; the good boundary on the geometric generic fibre is $\eta$-lc. The induction hypothesis applies to its two boundaries. Choose a rational number
\begin{equation}\label{eq:choose-t}
 c_r\eta^{p_r}/4\le t\le c_r\eta^{p_r}/2,
 \qquad C_t=(1-t)B_W+tD_W.
\end{equation}
The generic-fibre pair with boundary $C_t$ is klt: interpolation is lc at $c_r\eta^{p_r}$, and moving at most halfway from the $\eta$-lc boundary towards that lc boundary keeps all generic-fibre discrepancies at least $\eta/2$.

The use of the induction over the generic field is justified by spreading out a fixed log resolution and any divisorial valuation being tested. If an asserted inequality failed on the geometric generic fibre, it would fail on general complex fibres after a finite base change. The lower-dimensional theorem on those fibres excludes this. No boundedness statement with parameter $\eta$ is used here.

The effective boundaries $B_W$ and $C_t$ define klt-trivial fibrations. The rank-one condition and the effectiveness of the discriminant traces were checked in Section~\ref{sec:prelim}. The canonical bundle formula gives
\[
 K_Z+B_{Z,i}+M_{i,Z}\simq0\qquad (i=0,t)
\]
with $\Q$-b-Cartier b-nef moduli parts. The good generalized pair is $\delta$-lc for
\[
 \delta=d_r\eta^{2^r}
\]
by Proposition~\ref{prop:discriminant}. Apply Corollary~\ref{cor:generalized} on the base with $\sigma=c_s(\delta/2)^{p_s}$.

For a valuation $v$ nontrivial on the base, write $v|_{K(Z)}=e\zeta$ by Lemma~\ref{lem:restriction}. From the definition of the discriminant quotients and affine linearity,
\begin{align*}
 A\bigl(v,W,(1-\sigma)B_W+\sigma C_t\bigr)
 &=(1-\sigma)A(v,W,B_W)+\sigma A(v,W,C_t)\\
 &\ge e\{(1-\sigma)a_0(\zeta)+\sigma a_t(\zeta)\}\ge0.
\end{align*}
Valuations trivial on the base field are valuations of the generic fibre and have already been controlled there. Thus the final pair is lc for all divisorial valuations. The coefficient of $D_W$ in this pair is $t\sigma$, and
\[
 t\sigma\ge\frac{c_rc_s}{4}
 \left(\frac{d_r}{2}\right)^{p_s}
 \eta^{p_r+2^rp_s}
 =\frac{c_rc_s}{4}\left(\frac{d_r}{2}\right)^{p_s}
 \eta^{2^{r+s}-1}.
\]
It is therefore enough to choose recursively
\begin{equation}\label{eq:cn}
 c_n=\min\left\{\frac14,\ t_n,
 \min_{\substack{r+s=n\\r,s>0}}
 \frac{c_rc_s}{4}\left(\frac{d_r}{2}\right)^{p_s}\right\}.
\end{equation}
Lc at the constructed parameter implies lc at every smaller parameter by convexity with the good boundary. Crepancy transfers the assertion back to $X$.

Finally, for a non-$\Q$-factorial $X$, take a small projective $\Q$-factorialization. Since $K_X$ is $\Q$-Cartier, both anticanonical divisors are $\Q$-Cartier. Their crepant transforms on the small modification are effective. That modification is of Fano type by the weak log Fano perturbation used in Lemma~\ref{lem:terminalization}. Apply the proved result there and descend crepantly. Sharpness is proved in Proposition~\ref{prop:examples}.
\end{proof}

\subsection{The form needed for volume estimates}
\begin{corollary}\label{cor:logfanointerp}
Let $(F,\Delta)$ be an $\eta$-lc log Fano pair of dimension $r$. For any effective $D\simq-K_F$, the pair
\[
 (F,(1-t)\Delta+tD)
\]
is klt whenever $0\le t\le\mu_r(\eta)$, where
\begin{equation}\label{eq:mr}
 \mu_r(\eta)=m_r\eta^{p_r},\qquad
 m_r=\frac{c_r}{2^{p_r+1}}.
\end{equation}
In particular $m_1=1/16$.
\end{corollary}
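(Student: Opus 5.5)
The plan is to reduce to Theorem~\ref{thm:interp} by replacing $\Delta$ with an anticanonical boundary, then recover a statement about $\Delta$ itself by convexity. \textbf{Step 1.} By Lemma~\ref{lem:book}(1) there is $B\ge\Delta$ with $B\simq-K_F$ and $(F,B)$ $\eta/2$-lc. Here $F$ is of Fano type, since $(F,\Delta)$ is klt log Fano. It is also $\Q$-Gorenstein, since $K_F+\Delta$ is $\Q$-Cartier and $\Delta$ is $\Q$-Cartier on a $\Q$-factorialization; if needed I would pass to a small $\Q$-factorialization as at the end of the proof of Theorem~\ref{thm:interp}. Applying Theorem~\ref{thm:interp} to $B$ and $D$ with parameter $\eta/2$ shows that
\[
 \bigl(F,(1-t_0)B+t_0D\bigr)\text{ is lc for } t_0=c_r(\eta/2)^{p_r}=2^{-p_r}c_r\eta^{p_r}.
\]

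\textbf{Step 2 (convexity).} For $0\le t\le t_0/2=\mu_r(\eta)$, I would write $(1-t)\Delta+tD$ as a combination of the three boundaries $\Delta$, $B$ and $(1-t_0)B+t_0D$. Using $\Delta\le B$, one has
\[
 (1-t)\Delta+tD\le (1-\lambda)\Delta+\lambda\bigl((1-t_0)B+t_0D\bigr),\qquad \lambda=t/t_0\le\tfrac12,
\]
provided the coefficient comparison works. The right side has $D$-coefficient $\lambda t_0=t$ and a nonnegative extra multiple of $B$. Because $B-\Delta\ge0$, decreasing a boundary only raises log discrepancies. Moreover, $(1-\lambda)\Delta+\lambda(\ldots)$ is a convex combination of a pair whose log discrepancies are at least $\eta$ and one whose log discrepancies are at least $0$. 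Its log discrepancies are therefore at least $(1-\lambda)\eta\ge\eta/2>0$. Hence the pair is klt, and dropping the nonnegative excess $\lambda(1-t_0)(B-\Delta)$ preserves this.

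Concretely, I would check the coefficient identity:
\[
 (1-\lambda)\Delta+\lambda(1-t_0)B+tD=(1-t)\Delta+tD+\lambda(1-t_0)(B-\Delta)+\bigl(t-\lambda+\lambda(1-t_0)\bigr)\Delta .
\]
The last bracket equals $t-\lambda t_0=0$, so the comparison holds exactly.

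\textbf{Step 3.} The constant is $m_r=c_r2^{-p_r}/2=c_r/2^{p_r+1}$, which matches \eqref{eq:mr}. For $r=1$, $c_1=1/4$ and $p_1=1$, giving $m_1=1/16$.

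The only delicate point is Step 1. Lemma~\ref{lem:book}(1) loses a factor $2$ in $\eta$ when $\eta=1$, and this loss is absorbed into the constant. One also has to confirm the Fano type and $\Q$-Gorenstein hypotheses; I would use the small $\Q$-factorialization reduction and descend crepantly.
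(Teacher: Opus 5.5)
Your argument is correct and is essentially the paper's proof: complete $\Delta$ to an $\eta/2$-lc anticanonical $B\ge\Delta$ via Lemma~\ref{lem:book}(1), apply Theorem~\ref{thm:interp} at $t_0=c_r(\eta/2)^{p_r}$, and use convexity for $t\le t_0/2$. The only difference is cosmetic: the paper combines the lc pair with $B$ (discrepancies at least $\eta/4$) and then shrinks $B$ to $\Delta$, while you combine with $\Delta$ directly and discard $\lambda(1-t_0)(B-\Delta)$. For the $\Q$-Gorenstein hypothesis, your remark that $\Delta$ becomes $\Q$-Cartier on a $\Q$-factorialization does not show that $F$ itself is $\Q$-Gorenstein; what actually secures the hypothesis is your fallback of passing to a small $\Q$-factorialization and descending crepantly.
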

\begin{proof}
Complete $\Delta$ to an effective anticanonical boundary $B$ which is $\eta/2$-lc. Theorem~\ref{thm:interp} gives an lc pair at $t_0=c_r(\eta/2)^{p_r}$. For $t\le t_0/2$, its convex combination with the good pair has discrepancies at least $\eta/4$. Replacing $B$ by the smaller divisor $\Delta$ only increases discrepancies.
\end{proof}

\begin{corollary}\label{cor:alpha}
Let $(X,\Delta)$ be an $\eta$-lc log Fano $n$-fold. For every effective $H\simq-(K_X+\Delta)$,
\[
 \lct(X,\Delta;H)\ge c_n(\eta/2)^{p_n}.
\]
\end{corollary}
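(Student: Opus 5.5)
The plan is to reduce Corollary~\ref{cor:alpha} to Theorem~\ref{thm:interp} by completing the log Fano boundary to an anticanonical one and rewriting $H$ as a difference of anticanonical divisors.

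First, use Lemma~\ref{lem:book}(1) to complete $\Delta$ to $B\ge\Delta$ with $B\simq-K_X$ and $(X,B)$ $\eta/2$-lc. Write $G=B-\Delta\ge0$, so $G\simq-(K_X+\Delta)\simq H$. Then $D:=\Delta+H$ is effective and $D\simq-K_X$.

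Next, apply Theorem~\ref{thm:interp} to $B$ and $D$ with parameter $\eta/2$. For $t_0=c_n(\eta/2)^{p_n}$ this shows that
\[
 (X,(1-t_0)B+t_0D)=(X,\Delta+(1-t_0)G+t_0H)
\]
is lc. Since $G\ge0$ and $1-t_0\ge0$ (recall $c_n\le1/4$), removing $(1-t_0)G$ only increases discrepancies. Hence $(X,\Delta+t_0H)$ is lc, which is exactly $\lct(X,\Delta;H)\ge t_0$.

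The only points to check are routine. The hypotheses of Theorem~\ref{thm:interp} require $X$ to be $\Q$-Gorenstein of Fano type, which follows from $(X,\Delta)$ being klt log Fano with $K_X+\Delta$ $\Q$-Cartier. If $X$ is not $\Q$-Gorenstein a priori, we would first pass to a small $\Q$-factorialization as at the end of the proof of Theorem~\ref{thm:interp}; the paper's convention implicitly assumes $K_X$ is $\Q$-Cartier, which I will also assume. I do not expect a genuine obstacle: the step to be careful with is the use of $\eta/2$ (rather than $\eta$) from Lemma~\ref{lem:book}(1), which accounts for the factor $(\eta/2)^{p_n}$ in the statement.
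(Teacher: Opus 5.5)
Your proposal is correct and is essentially the paper's own proof. You complete $\Delta$ to an $\eta/2$-lc anticanonical boundary $B=\Delta+G$ via Lemma~\ref{lem:book}(1), set $D=\Delta+H$, apply Theorem~\ref{thm:interp} at $t_0=c_n(\eta/2)^{p_n}$, and then discard the effective divisor $(1-t_0)G$.
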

\begin{proof}
Complete $\Delta$ to $B=\Delta+A\simq-K_X$ with $(X,B)$ $\eta/2$-lc, and put $D=\Delta+H$. Interpolation at $t=c_n(\eta/2)^{p_n}$ gives the lc boundary $\Delta+(1-t)A+tH$. Remove the effective divisor $(1-t)A$.
\end{proof}

\section{Reduction to a bounded polarized base}\label{sec:base}

The constants in the final volume estimate must depend on dimension alone. In particular, a base degree obtained by applying BAB with the varying parameter $\eta$ would not suffice. The following reduction uses terminality of the unmarked total space to start a separate sequence of fixed-parameter bounds.

We recall the two external results needed here. Birkar's base-singularity theorem \cite[Corollary~1.2]{BirFib} states that, for fixed relative dimension $d$ and $\alpha>0$, a Fano fibration $V\to T$ with $V$ $\alpha$-lc and $K_T$ $\Q$-Cartier has a $\beta$-lc base, for some $\beta>0$ depending only on $d,\alpha$. A Mori base of a $\Q$-factorial variety is $\Q$-factorial, so the $\Q$-Cartier hypothesis is satisfied in all its applications below.

The lifting result \cite[Lemma~2.11]{MM} says that if $V\to T$ is a contraction between $\Q$-factorial varieties of Fano type and $T\dashrightarrow T'$ is a birational contraction to a $\Q$-factorial variety, then there is a commutative diagram
\[
\begin{array}{ccc}
 V&\dashrightarrow&V'\\
 \downarrow&&\downarrow\\
 T&\dashrightarrow&T'
\end{array}
\]
in which $V\dashrightarrow V'$ is a birational contraction, is an isomorphism over the generic point of $T$, and $V'\to T'$ is a contraction of Fano type with $V'$ $\Q$-factorial. In the absolute situation $V'$ is again of Fano type. These properties are the ones used; no preservation of terminality of $V'$ is required.

\begin{proposition}\label{prop:boundedbase}
Fix $n$. There are constants $V_n^{\mathrm{term}}>0$ and $D_n\ge1$, depending only on $n$, with the following property. If $X$ is of $\eta$-Fano type and has dimension $n$, then either
\[
 \vol(-K_X)\le V_n^{\mathrm{term}},
\]
or there are a $\Q$-factorial variety $X'$, an effective $\Q$-divisor $\Delta'$, and a contraction $f:X'\to Z$ such that
\begin{enumerate}
\item $\vol(-K_X)\le\vol(-K_{X'})$;
\item $(X',\Delta')$ is $\eta/4$-lc and $-(K_{X'}+\Delta')$ is ample;
\item $1\le b=\dim Z<n$, and $Z$ has a very ample Cartier divisor $H$ with $H^b\le D_n$.
\end{enumerate}
\end{proposition}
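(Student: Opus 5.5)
We begin as in the proof of Theorem~\ref{thm:interp}. Take the terminalization $Y\to X$ of Lemma~\ref{lem:terminalization}, so that $\vol(-K_X)\le\vol(-K_Y)$. Pull back an $\eta$-lc log Fano boundary: by Lemma~\ref{lem:book}(1) we may first complete it to an anticanonical $B$ which is $\eta/2$-lc, and its crepant transform $B_Y\ge0$ is then $\eta/2$-lc. Run a $K_Y$-MMP; it terminates by Fano type. By Lemma~\ref{lem:book}(3) every step is a birational contraction, so volumes do not decrease, and $B_Y$ stays effective and crepant. The output is a terminal $\Q$-factorial Mori fibre space $W\to Z_0$. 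If $Z_0$ is a point, $W$ is a terminal Fano $n$-fold, and Lemma~\ref{lem:projections} gives $\vol(-K_X)\le V_n^{\mathrm{term}}$. Otherwise $1\le\dim Z_0<n$.

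The key observation is that $W$ is terminal, hence $1$-lc, independently of $\eta$. By \cite[Corollary~1.2]{BirFib}, the base $Z_0$ is therefore $\beta$-lc for some $\beta=\beta(n)>0$; it is $\Q$-factorial because it is a Mori base. It is also of Fano type. Run a $-K$-type MMP on $Z_0$: take a klt log Fano boundary and pass to the anticanonical model of $Z_0$ (or run an MMP on $-K_{Z_0}$, which is big). This should produce a birational contraction $Z_0\dashrightarrow Z$ to a $\Q$-factorial variety with $-K_Z$ nef and big or ample. Since $-K$ is the divisor made more positive, discrepancies do not drop along this map, so $Z$ is $\beta$-lc weak Fano. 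Fixed-parameter BAB \cite{Bir21} bounds such $Z$ in a family depending only on $n$. Together with the uniform Cartier index and effective very ampleness, as in the proof of Lemma~\ref{lem:projections}, this yields a very ample $H$ with $H^b\le D_n$.

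Next, lift with \cite[Lemma~2.11]{MM} to a $\Q$-factorial $X'$ of Fano type, with a birational contraction $W\dashrightarrow X'$ over $Z_0\dashrightarrow Z$ and a contraction $f:X'\to Z$. Lemma~\ref{lem:book}(3) gives (1). For (2), push forward the crepant boundary $B_W$. By Lemma~\ref{lem:book}(2) it is crepant on $X'$, hence effective, anticanonical, and $\eta/2$-lc. Then perturb to make the log Fano divisor ample: write $-K_{X'}\simq A+G$ with $A$ ample and $G\ge0$, and set $\Delta'=(1-u)B'+uG$ for small $u$. This makes $-(K_{X'}+\Delta')\simq uA$ ample. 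Choosing $u$ small, or first replacing $G$ by a general member and using Lemma~\ref{lem:snc}-type control as in Lemma~\ref{lem:book}(1), keeps the pair $\eta/4$-lc.

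I expect the main obstacle to be step two, namely ensuring that the passage to a bounded base preserves $\beta$-lc and $\Q$-factoriality while remaining a birational contraction, so that \cite{MM} applies. I would try a $-K_{Z_0}$-MMP, which keeps $\Q$-factoriality, followed by boundedness of $\beta$-lc weak Fano varieties, rather than the possibly non-$\Q$-factorial anticanonical model. A secondary worry is that the perturbation in (2) must not degrade $\eta/2$ by more than a factor two uniformly. Here the freedom to choose $u$ depending on $X'$ suffices, since only the lc level must be uniform.
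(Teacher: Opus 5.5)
Your second step has a genuine gap, and it is not the $\Q$-factoriality issue you anticipated. Discrepancies are monotone in the opposite direction to the one you claim. A $-K_{Z_0}$-MMP contracts $K_{Z_0}$-positive rays. On a common resolution $p\colon V\to Z_0$, $q\colon V\to Z$ one gets $q^*K_Z=p^*K_{Z_0}+E$ with $E\ge0$, so the crepant boundary of $Z$ is larger and its log discrepancies drop. The same happens when you pass to the anticanonical model of a variety whose $-K$ is only big.

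This is fatal here. Take $X=\PP^1\times\mathbb F_m$, which is smooth and of $1/m$-Fano type via $\mathrm{pr}_2^*\bigl((1-1/m)S\bigr)$. Your first step may end at once with the Mori fibre space $X\to\mathbb F_m$, so $Z_0=\mathbb F_m$. This $Z_0$ is smooth, but, as in Section~\ref{sec:examples}, its anticanonical model is $\PP(1,1,m)$. That surface is only $2/m$-lc, and $\vol(-K)=(m+2)^2/m\to\infty$. In fact the only birational contractions of $\mathbb F_m$ are $\mathbb F_m$ and $\PP(1,1,m)$, and neither is bounded as $m\to\infty$. So no argument that keeps a birational model of $Z_0$ as the final base can produce a $D_n$ independent of $\eta$. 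Being $\beta$-lc and of Fano type does not bring $Z_0$ within reach of BAB. For that one needs a log (weak) Fano boundary with a fixed lc parameter, which $Z_0$ need not carry.

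The missing idea is to lower the dimension of the base rather than modify it within its birational class.

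\begin{enumerate}
\item Run a $K_{Z_0}$-MMP instead. It does preserve $\delta_0$-lc, and since $K_{Z_0}$ is not pseudo-effective it ends with a Mori fibre space $Z_0'\to Z_1$.
\item Lift the birational part to the total space by \cite[Lemma~2.11]{MM}, and compose with $Z_0'\to Z_1$.
\item If $Z_1$ is a point, then $Z_0'$ is a $\delta_0$-lc Fano variety, and BAB applies with a dimension-only parameter.
\item Otherwise apply \cite[Corollary~1.2]{BirFib} to $Z_0'\to Z_1$ with the fixed parameter $\delta_0$, and repeat on $Z_1$.
\end{enumerate}

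After fewer than $n$ rounds you reach a positive-dimensional Fano base with an lc bound depending only on $n$, and such bases are bounded. In the example above, this replaces $\mathbb F_m$ by $\PP^1$.

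Your first and last steps are fine. Your perturbation $(1-u)B'+uG$ works as well as the paper's $\tfrac12B'+\tfrac12\Delta_0$.
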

\begin{proof}
As before, we complete the given log Fano boundary to an effective anticanonical boundary $B$ which is $\eta/2$-lc. Take a terminalization $Y\to X$ and run a $K_Y$-MMP, ending with a terminal Mori fibre space $W\to Z_0$. The anticanonical boundary stays effective, crepant and $\eta/2$-lc. Lemmas~\ref{lem:book} and~\ref{lem:terminalization} show that the volumes do not decrease. If $Z_0$ is a point, then $W$ is terminal Fano and the first alternative follows from Lemma~\ref{lem:projections}.

Suppose $\dim Z_0>0$. The base is $\Q$-factorial and of Fano type. Apply Birkar's theorem to $W\to Z_0$ with the fixed parameter $1/2$. Since $W$ is terminal, this gives a number $\delta_0>0$ depending only on the dimensions for which $Z_0$ is $\delta_0$-lc. This application does not involve the good boundary or its parameter $\eta$.

Run a $K_{Z_0}$-MMP. Its birational part $Z_0\dashrightarrow Z_0'$ is a birational contraction, and discrepancies of the unmarked varieties do not decrease. Its final step is a Mori fibre space $Z_0'\to Z_1$. Lift the birational part to the total space by the stated lifting lemma. Compose the resulting contraction to $Z_0'$ with $Z_0'\to Z_1$.

If $Z_1$ is a point, use $Z_0'$ as the final base. It is a $\delta_0$-lc Fano variety. Otherwise apply Birkar's theorem to $Z_0'\to Z_1$ with the fixed parameter $\delta_0$. This gives a $\delta_1$-lc base, where $\delta_1$ depends only on the preceding dimensions. Repeat the same procedure on $Z_1$. At each repetition the dimension of the base strictly decreases. There are fewer than $n$ repetitions, and only finitely many possible sequences of dimensions. Every parameter obtained in this process thus depends only on $n$ and on such a finite sequence.

The procedure ends with a positive-dimensional Fano base with a dimension-only lower discrepancy bound. BAB makes the collection of these final bases bounded. A bounded projective family admits a finite collection of relative very ample polarizations; their fibre degrees are bounded. Taking the maximum over the possible dimensions and sequences gives $D_n$. Compositions still have connected fibres, so the final morphism $X'\to Z$ is a contraction.

Every lifted map of total spaces was a birational contraction. Their anticanonical volumes do not decrease, and the final total space carries the effective $\eta/2$-lc boundary $B'\simq-K_{X'}$ crepant to $B$. Choose any effective klt log Fano boundary $\Delta_0$ on the Fano type variety $X'$ and set
\[
 \Delta'=\tfrac12B'+\tfrac12\Delta_0.
\]
Its discrepancies are at least $\eta/4$, and
\[
 -(K_{X'}+\Delta')\simq-\tfrac12(K_{X'}+\Delta_0)
\]
is ample. This proves all three properties.
\end{proof}

\begin{remark}\label{rem:fixed}
The initial total space is terminal, but the final total space and its general fibre need not be terminal Fano. Terminal boundedness is used at the first base step and in the point-base alternative only. Subsequent applications of the base-singularity theorem are to the lower-dimensional base Mori fibre spaces. The varying parameter is carried separately by the crepant anticanonical boundary. This separation prevents an unspecified function of $\eta$ from entering $D_n$.
\end{remark}

\section{The volume estimate and the main theorem}\label{sec:volume}

We next prove a volume estimate over a polarized base. It extends the curve-base connectedness argument in \cite[Theorem~6.1]{Jiang21} to a flag on a higher-dimensional base. The local construction of an isolated non-klt fibre is needed to make connectedness effective at each hyperplane step.

\subsection{Subtracting a free divisor}
\begin{lemma}\label{lem:sections}
Let $Y$ be a normal projective $d$-fold and $L$ a big $\Q$-Cartier divisor. Let $G$ be a normal general member of a base point free Cartier system, and suppose $L|_G$ is big. If
\[
 \vol_Y(L)>dq\vol_G(L|_G),\qquad q>0,
\]
then $L-qG$ is big. The same assertion holds when $G$ is a general fibre divisor of a contraction to a smooth curve.
\end{lemma}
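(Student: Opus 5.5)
The plan is the standard restriction-sequence argument, comparing sections of $mL$ with sections of $mL-jG$ one step at a time. Fix $m$ sufficiently divisible that $mL$ is Cartier and $mq$ is an integer, and put $k=mq$. For $0\le j<k$ we have the exact sequence
\[
 0\to\OO_Y(mL-(j+1)G)\to\OO_Y(mL-jG)\to\OO_G\bigl((mL-jG)|_G\bigr)\to0.
\]
It is exact because $G$ is Cartier and normal, and a general member is not contained in the support of the relevant divisors. Taking global sections and summing over $j$ gives
\[
 h^0(Y,mL)\le h^0(Y,mL-kG)+\sum_{j=0}^{k-1}h^0\bigl(G,(mL-jG)|_G\bigr).
\]

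The next step is to bound each summand by $h^0(G,mL|_G)$. Since the system containing $G$ is base point free, so is its $j$-th multiple. A general member $G_j\in|jG|$ therefore restricts to an effective Cartier divisor $G_j|_G$ on $G$ linearly equivalent to $jG|_G$. Multiplication by its defining section injects $H^0(G,(mL-jG)|_G)$ into $H^0(G,mL|_G)$. Hence
\[
 h^0(Y,mL-kG)\ge h^0(Y,mL)-mq\,h^0(G,mL|_G).
\]

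Now pass to asymptotics along divisible $m$, using that the volume limsup is a limit on such multiples (as recalled in Section~\ref{sec:prelim}). This gives
\[
 h^0(Y,mL)=\vol_Y(L)\tfrac{m^d}{d!}+o(m^d),
 \qquad h^0(G,mL|_G)=\vol_G(L|_G)\tfrac{m^{d-1}}{(d-1)!}+o(m^{d-1}).
\]
Substituting, we get
\[
 h^0(Y,m(L-qG))\ge\frac{m^d}{d!}\bigl(\vol_Y(L)-dq\vol_G(L|_G)\bigr)+o(m^d).
\]
The right-hand side grows like a positive multiple of $m^d$ by hypothesis. Thus $\vol(L-qG)>0$, and $L-qG$ is big.

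For the curve case, let $f:Y\to C$ be a contraction to a smooth curve and $G=f^*P$ for a general point $P$. Then $G$ is Cartier, and $|f^*\OO_C(N P)|$ is base point free for large $N$. Equivalently, one uses $jG\sim f^*(jP)$ together with general fibres $f^*P_1+\cdots+f^*P_j$ meeting $G$ trivially. A general fibre of a contraction from a normal variety is normal, so the same injection and summation apply. In fact $G|_G$ is trivial, so the injection is an equality. The only mildly delicate point is choosing the auxiliary members $G_j$ generically, so that all restrictions are defined. Otherwise the argument is routine.
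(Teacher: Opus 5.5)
Your proposal is correct and follows the paper's proof essentially step for step. You telescope the restriction sequences, bound each summand by $h^0(G,mL|_G)$ using a section of $\OO_G(jG)$, pass to volume asymptotics, and handle the curve case by the triviality of $\OO_G(G)$. There are two small blemishes. First, you tacitly take $q$ rational; the paper covers real $q$ by choosing a slightly larger rational $q'$ that keeps the strict inequality and writing $L-qG=(L-q'G)+(q'-q)G$. Second, your aside that $f^*P_1+\cdots+f^*P_j$ lies in $|jG|$ fails on curves of positive genus, although your remark that $G|_G$ is trivial makes that aside unnecessary.
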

\begin{proof}
First let $q$ be rational and take divisible $m$. Repeated restriction sequences give
\[
 h^0(Y,mL-mqG)\ge h^0(Y,mL)
 -\sum_{j=0}^{mq-1}h^0(G,(mL-jG)|_G).
\]
Because $\OO_G(G)$ is globally generated, multiplication by a nonzero section of $\OO_G(jG)$ bounds every summand by $h^0(G,mL|_G)$. If $\OO_G(G)$ is trivial, the same statement is immediate. Divide by $m^d/d!$ and use the asymptotics for volumes to obtain
\begin{equation}\label{eq:subtract}
 \vol_Y(L-qG)\ge\vol_Y(L)-dq\vol_G(L|_G)>0.
\end{equation}
For a general fibre over a smooth curve, the normal line bundle is trivial, so the proof is identical. If $q$ is real, choose a slightly larger rational number retaining the strict inequality and use monotonicity of volume.
\end{proof}

\subsection{Isolating a whole fibre}
\begin{lemma}\label{lem:isolate}
Let $f:Y\to Z$ be projective and surjective, with $Z$ normal of dimension $k\ge2$. Let $C\ge0$, with $K_Y+C$ $\Q$-Cartier, and suppose $(Y,C)$ is klt over the generic point of $Z$. If $H$ is very ample on $Z$, then for a sufficiently general smooth point $z\in Z$ there is an effective $\Q$-divisor $T\simq kH$ such that, over a neighbourhood of $z$,
\[
 \Nklt(Y,C+f^*T)=f^{-1}(z).
\]
\end{lemma}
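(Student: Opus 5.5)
Choose $T$ as a sum of general hyperplanes through $z$. Fix a general smooth point $z\in Z$ over which $f$ is flat, the fibre $F_z=f^{-1}(z)$ is klt for $C$ (generic klt spreads to an open set), and which lies outside the images of all non-klt centres of $(Y,C)$ and of the singular locus of a fixed log resolution $g:Y'\to Y$ of $(Y,C)$. Choose $k$ general members $H_1,\dots,H_k\in|H|$ passing through $z$. Since $H$ is very ample, the subsystem of $|H|$ through $z$ has base locus exactly $\{z\}$. Hence $H_1\cap\cdots\cap H_k=\{z\}$ near $z$, the $H_i$ form a regular system of parameters at $z$, and they are snc near $z$. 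Put $T=\sum_i(1-\tau)H_i+\tau'G$ with small rational adjustments; the basic choice is $T=\sum_{i=1}^kH_i\simq kH$. It is effective.

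The key local claim is that over a neighbourhood $U\ni z$, the pair $(Y,C+f^*\sum H_i)$ is lc but not klt exactly along $F_z$, and klt elsewhere over $U$. Away from $F_z$ at most $k-1$ of the $H_i$ pass through a given point of $U$. Over $U\setminus\{z\}$, general members of the subsystem are general in the Bertini sense: the linear system is free there. So by Bertini for the log resolution, $g^*f^*H_i$ are smooth and transverse to everything over $U\setminus\{z\}$. Any partial sum with coefficient one on components cut out transversally is lc. It is non-klt only along the strata $\bigcap_{i\in S}f^*H_i$, which are positive-dimensional and not contained in one fibre. This is the problem: with coefficient one, the non-klt locus contains $f^{-1}(H_i\cap U)$, not just $F_z$. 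So I perturb to $T=(1-\epsilon)\sum_iH_i+\epsilon' G$. Over $U\setminus\{z\}$ the pair with coefficients $1-\epsilon$ on transverse general divisors is klt, and $G$ is a general member of $|mH|$ scaled so that $T\simq kH$.

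At $z$ itself, I compute on a log resolution of $(Y,C)$ further blown up so that $f'^{-1}(z)$ becomes divisorial. Concretely, blow up $z\in Z$ to get $E_z$ with $A(E_z,Z,0)=k$ and $\ord_{E_z}\sum H_i=k$. Any divisor $E$ on $Y$-side dominating $E_z$ with multiplicity $\mu=\ord_E f^*E_z\ge1$ has log discrepancy with respect to $C+f^*(\sum H_i)$ at most $A(E,Y,C)-\mu k$. Since $(Y,C)$ is klt near $F_z$, I can pick a component of the fibre of $Y'\times_Z\mathrm{Bl}_zZ$ over $E_z$ with $A(E,Y,C)\le\mu k$. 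This follows from adjunction-type comparison: $A(E,Y,C)=\mu A(E_z,Z,0)$ for a component over the generic point of $E_z$, because $f$ is flat and generically crepant there. With the coefficient $(1-\epsilon)$ this gives log discrepancy $\mu k\epsilon$. So I instead use $T=\sum_iH_i+\epsilon(H_1)$ renormalized, i.e.\ scale $T=c\sum H_i$ with $c$ slightly larger than $1$ to exceed the threshold at $F_z$ while staying below the $(Y,C)$-threshold of each $f^*H_i$ away from $z$. Then I rescale the class back to $kH$ by using a general $z$-avoiding divisor, or by scaling $H$. The main obstacle is this threshold-gap: proving that $\lct$ of $f^*(\sum H_i)$ along $F_z$ is strictly less than along the strata away from $z$. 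Then tie-breaking is needed, via the standard perturbation: take $c$ equal to the lct at $z$ and subtract a tiny multiple of a general element vanishing on $F_z$, so that the non-klt locus is exactly $F_z$ over $U$. The class is then corrected to $kH$ by adding general members of $|H|$ not through $z$, which do not affect the locus near $z$.
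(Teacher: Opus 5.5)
Your proposal has a genuine gap at the step that carries the content of the lemma: the choice of $T$.

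With exactly $k$ general members $H_1,\dots,H_k$ of $|H|$ through $z$, the divisor $\sum_iH_i$ is snc at $z$. Pull back along a resolution that is smooth over a neighbourhood of $z$, and blow up the fibre. The resulting divisor over $z$ has log discrepancy $k-ck$ with respect to $C+c\,f^*\sum_iH_i$. So the log canonical threshold is $1$ along $f^{-1}(z)$, and it is also $1$ at a general point of each $f^*H_i$. The ``threshold gap'' you name as the main obstacle does not exist:
\begin{itemize}
\item for $c<1$ the pair is klt near $f^{-1}(z)$;
\item for $c=1$ all of $f^{-1}(H_i)$ is non-klt;
\item for $c>1$ the pair is not even lc along $f^*H_i$.
\end{itemize}
Hence no rescaling $c\sum_iH_i$ works. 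Your fallback tie-breaking sentence also fails as written. It is stated backwards: subtracting an effective divisor that vanishes on $F_z$ lowers the order along $F_z$. It does not say that the added divisor must be a pullback from $Z$. And it leaves unchecked exactly what must be proved: that the perturbed pair is lc near $F_z$ with non-klt locus equal to all of $f^{-1}(z)$.

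The missing idea is to use more than $k$ hyperplanes through $z$, so that each coefficient is below one while $\ord_zT=k$. The paper takes $M>k$ general members $H_1,\dots,H_M$ through $z$, with general tangent hyperplanes, and sets $T=\frac kM\sum_jH_j\simq kH$. Your tie-breaking, done correctly, gives the special case $(1-\epsilon)\sum_{i\le k}H_i+k\epsilon H_{k+1}$. On $\operatorname{Bl}_zZ$:
\begin{itemize}
\item the exceptional divisor has coefficient $k-(k-1)=1$;
\item the strict transforms have coefficients $k/M<1$;
\item they cut $E_z\cong\PP^{k-1}$ in hyperplanes in general position, at most $k-1$ of which pass through any point.
\end{itemize}
So the configuration is snc and $\Nklt(Z,T)=\{z\}$ near $z$.

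For the lift to $Y$ you need more than flatness and ``generic crepancy''. Take a log resolution $g:V\to Y$ of $(Y,C)$. Shrink to $U\ni z$ so that $V\to U$ and every stratum of the resolved boundary are smooth over $U$, with all coefficients below one. Now base-change $\operatorname{Bl}_zU\to U$ to $V$. This gives a log resolution of $(Y,C+f^*T)$ over $U$ in which the only coefficient-one divisor is the base-changed exceptional divisor. That divisor surjects onto $(f\circ g)^{-1}(z)$, which gives both inclusions, including at points of $f^{-1}(z)$ where $Y$ is singular. By contrast, your argument choosing one component over $E_z$ only produces some non-klt centre inside $F_z$.
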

\begin{proof}
Take a log resolution $g:V\to Y$, with $K_V+C_V=g^*(K_Y+C)$. Shrink to a smooth open subset $U\subset Z$ so that $V\to U$ and all strata of the resolved boundary are smooth, and all remaining coefficients are less than one. To do this, remove the images of nondominating strata and the critical loci of the finitely many dominating strata. Properness and generic smoothness show that the removed set is proper.

Choose $z\in U$, an integer $M>k$, and $M$ sufficiently general members $H_j\in|H|$ through $z$. Their tangent hyperplanes are in general position. Put
\[
 T=\frac{k}{M}\sum_{j=1}^M H_j.
\]
On the blowup of $z$, the exceptional divisor has crepant coefficient
\[
 M\frac{k}{M}-(k-1)=1.
\]
Every strict transform has coefficient $k/M<1$. Near the exceptional divisor these strict transforms and that divisor have simple normal crossing support: at most $k-1$ general tangent hyperplanes meet on $\PP^{k-1}$. Away from $z$ the general members have simple normal crossings after shrinking $U$. Thus the base pair is lc near $z$ and its non-klt locus there is precisely $\{z\}$.

Base change this blowup to the smooth morphism $V\to U$. The resulting space is smooth, the exceptional divisor has coefficient one, and the horizontal boundary retains its coefficients less than one. Smoothness of every stratum ensures simple normal crossings with the pulled-back arrangement. Consequently its non-klt locus maps exactly onto $(f\circ g)^{-1}(z)$. The exceptional divisor surjects onto that fibre. Crepancy and proper surjectivity of $g$ now give the asserted equality on $Y$, including all singular points of the fibre.
\end{proof}

\subsection{A flag estimate}
\begin{proposition}\label{prop:flag}
Let $(X,\Delta)$ be an $\eta$-lc log Fano $n$-fold and let $f:X\to Z$ be a contraction with $1\le b=\dim Z<n$. Set $r=n-b$, let $H$ be very ample on $Z$, and put $d=H^b$. Assume a sufficiently general fibre $F$ satisfies
\[
 \vol(-K_F)\le M,
\]
and that $(F,(1-t)\Delta_F+tD_F)$ is klt for every effective $D_F\simq-K_F$ and every $0\le t<\mu$, where $0<\mu\le1$. Then
\begin{equation}\label{eq:flag}
 \vol(-K_X)\le
 \frac{n!}{r!}\bigl((b-1)d+2\bigr)(b+1)^{b-1}M\mu^{-b}.
\end{equation}
\end{proposition}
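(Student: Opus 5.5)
The plan is to argue by contradiction. We produce an effective divisor $D\simq-K_X$ that is very singular along two general fibres. Connectedness then forces a non-klt centre dominating $Z$, which is ruled out by the fibrewise hypothesis.

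Throughout, $\Delta\simq$ is not required; we only use that $-(K_X+\Delta)$ is ample and that for a general fibre $-K_X|_F=-K_F$ and $\Delta|_F=\Delta_F$. The key fibrewise observation is this. If $D\simq-K_X$ is effective and $F$ is a general fibre not contained in $\Supp D$, then $D|_F\simq-K_F$ is effective. Hence $(X,(1-t)\Delta+tD)$ is klt near $F$ for $t<\mu$, by inversion of adjunction for general fibres (a log resolution is smooth over a dense open subset of $Z$). So for $t<\mu$ the non-klt locus of $(X,(1-t)\Delta+tD)$ does not dominate $Z$.

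\textbf{Step 1: sections with high order along fibres.} Choose general smooth points $z_1,z_2\in Z$ and put $F_i=f^{-1}(z_i)$. We want $D\in|{-mK_X}|/m$ whose multiplicity along $F_1$ and $F_2$ is at least $q$, with $q$ slightly larger than $b/\mu$.

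To obtain it, we estimate the number of conditions for a section of $-mK_X$ to vanish to order $qm$ along $F_i$. Near $F_i$ the morphism is smooth, so the graded pieces of the filtration by order along $F_i$ are sections of $-mK_X|_{F_i}\otimes\mathrm{Sym}^j N^\vee$ with $N^\vee\cong\OO_{F_i}^{b}$. This gives at most
\[
\binom{qm+b}{b}h^0(F,-mK_F)\approx \frac{(qm)^b}{b!}\cdot\frac{m^rM}{r!}
\]
conditions per fibre. If $\vol(-K_X)>2\frac{n!}{b!r!}q^bM$, a section $D$ as above exists. An equivalent route uses Lemma~\ref{lem:sections} along a flag $Y_0=X\supset Y_1\supset\cdots$ of general members of $f^*|H|$, subtracting a free divisor at each of the $b$ steps. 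I would use whichever version most cleanly produces the constants $(b+1)^{b-1}$ and $(b-1)d+2$. I expect that in the flag version the final step is a curve of degree $d$ in $Z$, and that the term $(b-1)d+2$ counts the fibres forced to pass through the chosen points.

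\textbf{Step 2: non-klt along both fibres.} Blow up $F_i$, which is locally a smooth family, so this is locally a blowup of a codimension-$b$ smooth centre. The exceptional divisor has log discrepancy at most $b-t\,q$ with respect to $(X,(1-t)\Delta+tD)$. Hence for $t$ slightly below $\mu$ and $q\ge b/t$, both $F_1$ and $F_2$ lie in $\Nklt$.

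If higher-dimensional bases make this order count too weak, I would instead use Lemma~\ref{lem:isolate}. We add $f^*T$ with $T\simq bH$ to isolate a whole fibre as a non-klt centre, and pay for $f^*T$ by subtracting it from $-K_X$ through Lemma~\ref{lem:sections}. This is where I expect the factor $(b+1)^{b-1}$ and the degree $d$ to enter. The required bigness of $-K_X-f^*T-\cdots$ is again supplied by the volume assumption.

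\textbf{Step 3: connectedness.} The divisor $-(K_X+(1-t)\Delta+tD)\simq-(1-t)(K_X+\Delta)$ is ample. By Kollár--Shokurov connectedness, $\Nklt$ is connected. It contains the two disjoint fibres $F_1,F_2$ over distinct general points, so some connected component of $\Nklt$ joining them must map onto a positive-dimensional connected subset of $Z$ through $z_1$ and $z_2$.

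Since the points are general, this forces a component of $\Nklt$ dominating $Z$. When $b=1$ this is immediate. When $b\ge2$, I would iterate over a flag of general curves through the points, or use the isolated-fibre version to ensure that near $z_1$ the non-klt locus is exactly $F_1$. This contradicts the fibrewise klt observation, so $\vol(-K_X)$ is bounded by the threshold volume, giving \eqref{eq:flag}.

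\textbf{Main obstacle.} The main difficulty is Step 3 for $b\ge2$. Two fibres over points of a higher-dimensional base can be connected through non-klt centres lying over a curve without dominating $Z$. To rule this out, I would first try to cut down by $b-1$ general members of $f^*|H|$ through $z_1,z_2$, reducing to a base curve of degree at most $d$, and apply the $b=1$ argument there. This uses adjunction for general members of a free system, inversion of adjunction, and the fact that the hypothesis on $F$ is unaffected by the cutting. The cost of the cut, namely the change $-K_X|_G=-K_G+G|_G$ at each step, must be absorbed, and this is what should produce the factors $(b-1)d+2$ and $(b+1)^{b-1}$.
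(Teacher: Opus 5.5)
Your argument is complete only for $b=1$. There, imposing order $q>1/\mu$ along two general fibres and applying connectedness is the paper's curve step in another form, with the same constant $2nM\mu^{-1}$. (You may not assume $f$ is smooth near $F_i$, since $X$ is only klt; but flatness suffices for your count, and the discrepancy of the divisor over $F_i$ can be computed at its generic point.) For $b\ge2$ you correctly diagnose that Step~3 fails, but you give no replacement. The missing idea is a recursive volume comparison along a flag.

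Let $Y_i$ be the inverse images of a general flag $Z=Z_0\supset\cdots\supset Z_{b-1}$ cut by members of $|H|$. Put $L_i=-K_X|_{Y_i}$, $G_i=f_i^*(H|_{Z_i})$, $A_i=-(K_X+\Delta)|_{Y_i}$ and $U_i=\vol(L_i)$, so that $K_{Y_i}=-L_i+iG_i$. The paper shows $U_i\le(n-i)(b+1)\mu^{-1}U_{i+1}$ for $0\le i\le b-2$, by contradiction:
\begin{enumerate}
\item Otherwise Lemma~\ref{lem:sections} gives an effective $D_i\simq L_i-qG_i$ with $q>(b+1)/\mu$.
\item With $t=(b+1)/q<\mu$ and $C_i=(1-t)\Delta_i+tD_i$, the pair $(Y_i,C_i)$ is klt over a dense open set.
\item Only then does one pick $z$ general and apply Lemma~\ref{lem:isolate} with $T\simq(b-i)H|_{Z_i}$, making $F_z$ an open-and-closed part of the non-klt locus.
\item A general $f_i^*H_\infty$ with coefficient one produces a second, disjoint part.
\item But $-(K_{Y_i}+C_i+f_i^*T+f_i^*H_\infty)\simq(1-t)A_i$ is ample, contradicting connectedness.
\end{enumerate}
On the final curve one has $K_{Y_{b-1}}\equiv-L_{b-1}+(b-1)dF$. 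This is absorbed by taking $q>((b-1)d+2)/\mu$ together with two coefficient-one fibres. Multiplying the $b$ inequalities gives exactly the stated bound: the factors $n!/r!$ and $(b+1)^{b-1}$ come from the comparison steps, and $d$ enters only on the curve.

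Neither of your fallbacks closes this gap.
\begin{itemize}
\item \emph{Isolating a fibre on $X$.} Paying for $f^*T$ through Lemma~\ref{lem:sections} in a single step only trades $\vol(-K_X)$ for $\vol(-K_X|_{f^*H})$, which is not yet controlled; this is why the recursion is needed. You also need a second non-klt piece, such as the hyperplane at infinity, to obtain a disconnection. Moreover, the fibre must be chosen after the divisor, so that it is general for Lemma~\ref{lem:isolate}. This is why your Step~1 divisor, singular along prescribed fibres, cannot be combined with isolation.
\item \emph{Cutting to a curve.} Running the $b=1$ argument on $Y_C=f^{-1}(C)$, for a curve $C$ through $z_1,z_2$, bounds $\vol(-K_X|_{Y_C})$, not $\vol(-K_X)$, unless the comparison above is already available. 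If instead you restrict your Step~1 divisor from $X$ to $Y_C$, absorbing $(b-1)dF$ forces order about $((b-1)d+2)/(2\mu)$ along each of the two fibres. Your count then gives a bound of order $((b-1)d+2)^bM\mu^{-b}$. This has the right power of $\mu$, but it is not the stated inequality, which is linear in $d$.
\end{itemize}
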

\begin{proof}
We may choose a general flag cut by $b-1$ members of $|H|$ on $Z$, and take its inverse images:
\[
 Z=Z_0\supset Z_1\supset\cdots\supset Z_{b-1},
 \qquad X=Y_0\supset Y_1\supset\cdots\supset Y_{b-1}.
\]
The base sections are normal, and the last one is a smooth curve. On the total space the successive free systems have image dimension at least two, so general members are irreducible. Bertini on a log resolution and adjunction show that the $Y_i$ are normal and klt. In particular the canonical restriction formulas below are valid. Put
\[
 L_i=-K_X|_{Y_i},\quad G_i=f_i^*(H|_{Z_i}),\quad
 \Delta_i=\Delta|_{Y_i},\quad
 A_i=-(K_X+\Delta)|_{Y_i},\quad U_i=\vol_{Y_i}(L_i).
\]
Here $A_i$ is ample, $L_i=A_i+\Delta_i$ is big, and
\begin{equation}\label{eq:flagadj}
 K_{Y_i}=-L_i+iG_i.
\end{equation}

Fix $0\le i\le b-2$, and suppose
\[
 U_i>(n-i)(b+1)\mu^{-1}U_{i+1}.
\]
Choose rational $q>(b+1)/\mu$ retaining $U_i>(n-i)qU_{i+1}$. Lemma~\ref{lem:sections} gives an effective $D_i\simq L_i-qG_i$. Set
\[
 t=(b+1)/q<\mu,\qquad C_i=(1-t)\Delta_i+tD_i.
\]
On a sufficiently general fibre, $D_i|_F\simq-K_F$, so the pair is klt there by hypothesis. Equivalently it is klt over a dense open subset of $Z_i$. Put $k=b-i\ge2$. Lemma~\ref{lem:isolate} gives $T\simq kH|_{Z_i}$ isolating a sufficiently general fibre $F_z$ as its entire local non-klt locus. Choose one further general hyperplane section $H_\infty$ of $Z_i$ avoiding $z$ and give it coefficient one.

The non-klt locus of
\[
 (Y_i,C_i+f_i^*T+f_i^*H_\infty)
\]
has the whole fibre $F_z$ as a nonempty open-and-closed part. It has a second nonempty part over $H_\infty$: its pullback components have positive integral multiplicities and the other boundary is effective. These two parts are disjoint. On the other hand, \eqref{eq:flagadj} gives
\begin{align*}
 -(K_{Y_i}+C_i+f_i^*T+f_i^*H_\infty)
 &\simq(1-t)A_i+(tq-i-k-1)G_i\\
 &=(1-t)A_i,
\end{align*}
which is ample. The connectedness theorem \cite[Theorem~5.48]{KM98} contradicts this disconnection. Hence
\begin{equation}\label{eq:flagsteps}
 U_i\le(n-i)(b+1)\mu^{-1}U_{i+1}
 \qquad(0\le i\le b-2).
\end{equation}

It remains to consider $Y=Y_{b-1}$ over the smooth curve $Z_{b-1}$. Its general fibre $F$ is Cartier, and $G_{b-1}\equiv dF$. Thus
\[
 K_Y\equiv-L_{b-1}+(b-1)dF.
\]
If
\[
 U_{b-1}>(r+1)((b-1)d+2)M\mu^{-1},
\]
choose rational $q>((b-1)d+2)/\mu$ with
$U_{b-1}>(r+1)q\vol(-K_F)$. The fibre version of Lemma~\ref{lem:sections} gives $D\simq L_{b-1}-qF$, $D\ge0$. Set
\[
 t=((b-1)d+2)/q<\mu,\qquad C=(1-t)\Delta_{b-1}+tD.
\]
The pair is klt over a dense open subset of the curve. Add two distinct sufficiently general fibres $F_1,F_2$ with coefficient one. Its non-klt locus has finite image on the curve, and that image contains both points. It is therefore disconnected. But
\[
 -(K_Y+C+F_1+F_2)\equiv(1-t)A_{b-1}
\]
is ample, again contradicting connectedness. Consequently
\begin{equation}\label{eq:flagcurve}
 U_{b-1}\le(r+1)((b-1)d+2)M\mu^{-1}.
\end{equation}
Multiplying \eqref{eq:flagsteps} and \eqref{eq:flagcurve} proves \eqref{eq:flag}. For $b=1$, the hyperplane steps are absent and the curve argument is the entire proof.
\end{proof}

\begin{remark}
The small coefficients in Lemma~\ref{lem:isolate} are necessary here. If one instead used coefficient one hyperplanes through $z$, each hyperplane would be non-klt and could connect the fibre to $H_\infty$. Isolating the whole fibre first gives the open-and-closed part needed in the connectedness argument.
\end{remark}

\subsection{Proof of the volume theorem}
\begin{proof}[Proof of the upper bound in Theorem~\ref{thm:main}]
Let $a_j=2^j-j-1$ and recall $p_j=2^j-1$. In dimension one take $C_1=2$. Assume the theorem in smaller dimensions and apply Proposition~\ref{prop:boundedbase} to an $\eta$-Fano type $n$-fold. The terminal-Fano alternative is bounded by $V_n^{\mathrm{term}}$ and can be absorbed into $C_n$.

In the other alternative there is an $\eta/4$-lc log Fano pair on $X'$ over a base of dimension $b$ and polarized degree at most $D_n$. Write $r=n-b$. Restricting a log resolution over a smooth open subset of the base preserves the $\eta/4$ discrepancy bound on the general fibre. It is therefore of $\eta/4$-Fano type. By induction and Corollary~\ref{cor:logfanointerp}, the hypotheses of Proposition~\ref{prop:flag} hold with
\[
 M=C_r(\eta/4)^{-a_r},\qquad
 \mu=m_r(\eta/4)^{p_r}.
\]
All remaining factors in \eqref{eq:flag} depend only on dimensions. Its exponent of $\eta^{-1}$ is
\begin{equation}\label{eq:exponents}
 a_r+bp_r=(b+1)2^r-n-1\le2^{r+b}-n-1=a_n,
\end{equation}
because $b+1\le2^b$. Equality holds for $b=1$. Since $0<\eta\le1$, a single constant absorbs all cases. More precisely, a valid recursive choice is
\begin{equation}\label{eq:Cn}
\begin{split}
 C_n=\max\biggl\{V_n^{\mathrm{term}},\ 
 \max_{1\le r<n}\frac{n!}{r!}
 &\bigl((n-r-1)D_n+2\bigr)(n-r+1)^{n-r-1}\\[-2pt]
 &\cdot C_r m_r^{-(n-r)}4^{a_r+(n-r)p_r}\biggr\}.
\end{split}
\end{equation}
This proves the volume assertion for $\eta$-Fano type varieties.

If $X$ is an $\eta$-lc weak Fano variety, the base point free theorem makes a multiple of $-K_X$ free. Its anticanonical morphism is birational, since $-K_X$ is big. On the anticanonical model $Y$, the descended ample $\Q$-Cartier divisor equals $-K_Y$, by pushing forward on the isomorphism locus. The morphism is crepant, $K_X=f^*K_Y$, and $Y$ remains $\eta$-lc. Its anticanonical volume equals that of $X$. Applying the Fano case gives the stated extension. The optimality assertion is proved in Section~\ref{sec:examples}.
\end{proof}

\begin{remark}[Real boundaries]\label{rem:real}
Suppose the log Fano boundary in the hypothesis is an effective real divisor. On a small $\Q$-factorialization its negative log canonical divisor is nef and big. Write it as $A\sim_\R H+G$ with $H$ ample and $G\ge0$. Add a sufficiently small rational multiple of $G$ to the boundary, preserving $\eta/2$-lc and making the negative log canonical divisor ample. On a log resolution of the resulting fixed support, crepant coefficients depend affinely on the boundary coefficients. A sufficiently close effective rational approximation preserves $\eta/4$-lc, by Lemma~\ref{lem:snc}, and ampleness, by openness of the ample cone. The rational-boundary theorem applies with $\eta/4$. Small modifications preserve anticanonical volume, so only the dimension-only constant changes.
\end{remark}

\subsection{Dimension four and the constants}
For fourfolds, the three possible fibre dimensions in \eqref{eq:exponents} give
\[
\begin{array}{c|c|c|c|c}
 r&b&a_r&p_r&a_r+bp_r\\\hline
 3&1&4&7&11\\
 2&2&1&3&7\\
 1&3&0&1&3
\end{array}
\]
Thus the relevant interpolation exponent for the fourfold volume bound is $p_3=7$, not the separate fourfold interpolation exponent $p_4=15$. Jiang--Zou's threefold theorem \cite[Theorem~1.1]{JZ24} may replace the lower-dimensional volume induction in the first row. For example, using their strict parameter $\eta/4$ gives $C_3=819200$ for $0<\eta\le1$.

With $C_1=2$ and $m_1=1/16$, formula~\eqref{eq:Cn} specializes to
\begin{equation}\label{eq:Cfour}
 C_4=\max\left\{
 \begin{array}{l}
 V_4^{\mathrm{term}},\\
 33554432\,C_3m_3^{-1},\\
 589824\,C_2(D_4+2)m_2^{-2},\\
 402653184\,(D_4+1)
 \end{array}\right\}.
\end{equation}
This displays the dependence on the intermediate constants, but is not an explicit value of $C_4$. In particular, $D_4$ in Proposition~\ref{prop:boundedbase} is obtained from a bounded family after applying the base-singularity theorem. The cited results as used here do not specify an explicit value for this degree. Some other inputs can be made explicit in low dimension, but doing so alone would not make \eqref{eq:Cfour} numerical. We therefore leave $C_4$ dimension dependent, as in the general theorem.

\section{Examples and further discussion}\label{sec:examples}

\subsection{Optimality in every dimension}
We describe the family underlying \cite[Example~6.3]{Amb16}, numbered Example~5.3 in arXiv:1411.2770v1. The discrepancy-index calculation below checks the singularity parameter for all valuations, and not only for the invariant divisors.

\begin{proposition}\label{prop:examples}
For every $n\ge2$ and integer $h\ge3$, there is a well-formed weighted projective Fano variety $X_{n,h}$ of dimension $n$, with $\Q$-factorial singularities and Picard number one, such that $X_{n,h}$ is $1/h$-lc and
\begin{equation}\label{eq:sharpvol}
 \vol(-K_{X_{n,h}})\sim h^{2^n-n-1}\qquad(h\longrightarrow\infty).
\end{equation}
There are also effective anticanonical $\Q$-divisors $B_h,D_h$ on $X_{n,h}$, with $(X_{n,h},B_h)$ $1/h$-lc, for which lc of $(X_{n,h},(1-t)B_h+tD_h)$ implies
\begin{equation}\label{eq:sharpinterp}
 t\le\frac{h}{u_{n+1}},\qquad
 \frac{h}{u_{n+1}}\sim h^{-(2^n-1)}.
\end{equation}
Here $u_j$ is the integer sequence in the proof. The symbol $\sim$ in \eqref{eq:sharpvol} and \eqref{eq:sharpinterp} means that the ratio tends to one.
\end{proposition}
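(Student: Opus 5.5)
We will use the Sylvester-type sequence behind Ambro's example. Fix $h\ge3$ and define $u_1=h$ and $u_{j+1}=u_j(u_j-1)+1$ suitably modified so that $\sum_{j\le n}1/u_j$ falls short of $1$ by exactly the desired amount. Concretely, we aim for weights $w_0,\dots,w_n$ on $\PP(w_0,\dots,w_n)$ of the form $w_i=W/u_i$ (for the first $n$ indices) plus one small weight, with $W=\operatorname{lcm}$ or product of the $u_i$. The defining requirement is $\sum_i w_i=W\bigl(1-1/u_{n+1}\bigr)\cdot$ (a unit factor), so that $-K=\OO(\sum w_i)$. The volume of $\OO(\sum w_i)$ on weighted projective space is $(\sum w_i)^n/\prod w_i$. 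With $u_j\sim h^{2^{j-1}}$, the product $\prod w_i$ is essentially $W^{n}/ \prod u_i$ times the small weight. The ratio then behaves like $\prod_{i\le n}u_i$ divided by a factor of size $h^{n}$ coming from the small weight and normalization, giving $h^{(2^n-1)-n}=h^{2^n-n-1}$. We first fix the exact weights so that well-formedness holds, i.e.\ any $n$ weights are coprime; pairwise coprimality of Sylvester-type terms is the tool. Then we compute the leading term to get \eqref{eq:sharpvol} with ratio tending to one.

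Next we verify that $X_{n,h}$ is $1/h$-lc. Since $X$ is toric, the minimal log discrepancy is a minimum over primitive lattice points $v$ in the fan's support, and $A(v)$ equals the value of the piecewise linear function equal to one on primitive ray generators. In weighted-projective terms this reduces to the standard formula. For each affine chart $\C^n/\mu_{w_i}$ and each $k=1,\dots,w_i-1$, the log discrepancy of the corresponding monomial valuation is $\sum_{j\ne i}\{kw_j/w_i\}$. We must show this quantity is at least $1/h$. Because a quotient cyclic singularity has its mld attained at such age-type valuations, this check covers all divisorial valuations. We expect the main obstacle to be the fractional-part estimate. The first attempt will use the relation $\sum_j w_j\equiv$ small mod $w_i$: it forces $\sum_{j\ne i}\{kw_j/w_i\}\equiv -k\cdot(\text{small})/w_i$ mod $1$, and positivity then bounds it below by a multiple of $1/h$. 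The chart of the small weight, whose ages involve $1/u_1=1/h$, gives exactly $1/h$ and shows equality.

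For \eqref{eq:sharpinterp} we take $D_h=\frac{\sum w_i}{w_n}\{x_n=0\}$ supported on the toric divisor of smallest weight, which has huge coefficient. For $B_h$ we take a general $\Q$-divisor in $|-mK|/m$, which is $1/h$-lc by Lemma~\ref{lem:book}(1) with $\eta=1/h<1$. The coefficient of $D_h$ is $\sum w_i/w_n\approx u_{n+1}/h$ up to the normalization. The lc condition at the toric valuation of the coordinate divisor therefore forces $t\cdot\coef\le1$, i.e.\ $t\le h/u_{n+1}$. Since $u_{n+1}\sim h^{2^n}$, the asymptotic \eqref{eq:sharpinterp} follows; the sharpness clauses of Theorems~\ref{thm:main} and \ref{thm:interp} follow by taking $\eps=1/h$.
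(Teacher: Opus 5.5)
Your outline has the right shape, but there are genuine gaps. First, the weights are never actually fixed. With $u_1=h$ and $u_{j+1}=u_j(u_j-1)+1$ one gets $\sum_{j\le n}1/u_j=1/(h-1)-1/(u_{n+1}-1)$, which is not the relation you postulate. Phrases like ``suitably modified'', ``lcm or product'' and ``a unit factor'' leave the example undetermined, and its existence is the content of the statement.

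A choice that works is $u_{j+1}=u_j(1+u_j)$ and $q_j=1+u_j$; these are pairwise coprime, since $q_i\mid u_j$ for $i<j$. Put $L=\prod_jq_j=u_{n+1}/h$ and $X=\PP(1,a_1,\dots,a_n)$ with $a_j=L/q_j$. Telescoping gives $\sum_j1/q_j=1/h-1/u_{n+1}$. Hence $-K_X\simq QS$ for the weight-one divisor $S$, with $Q=1-1/h+u_{n+1}/h^2$, and $\vol(-K_X)=Q^n/L^{n-1}$ gives the asymptotic with ratio tending to one.

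Second, your $1/h$-lc argument does not work. Knowing $\sigma=\sum_{j\ne i}\{kw_j/w_i\}$ modulo $1$ as a fraction with denominator $w_i$, together with positivity, only yields lower bounds of size $1/w_i$. In the example above $Q\equiv(h-1)h^{-1}\pmod{a_i}$, so there is no ``small'' residue to exploit. Also, the weight-one chart is smooth, so it does not produce the value $1/h$.

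The missing idea is to pass to the pair $(X,(1-1/h)S)$. Since $h\bigl(K_X+(1-1/h)S\bigr)\sim-LS$ is Cartier, all its log discrepancies lie in $\frac1h\Z$. In your chart notation these are $\frac1h\{k/a_i\}+\sum_{j\ne i}\{ka_j/a_i\}$. They are positive, because the pair is toric with boundary coefficient below one, so they are at least $1/h$. Dropping the boundary then shows that $X$ is $1/h$-lc.

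The interpolation part fails as you set it up. With $B_h$ a general member of $|-mK_X|/m$, the lc condition at the coordinate divisor gives only $t\le w_{\min}/\sum_iw_i$. For the example above this is $1/Q\sim h^2/u_{n+1}$.

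In fact $(X,(1-t)B_h+tQS)$ is lc for every $t\le1/Q$: it is a toric boundary with coefficient at most one plus a small general member of a free system. Since $1/Q>h/u_{n+1}$, the implication ``lc $\Rightarrow t\le h/u_{n+1}$'' is false for your choice, and you would only see the exponent $2^n-2$. Your assertion that the coefficient of $D_h$ is of order $u_{n+1}/h$ is unsupported. The paper's closing discussion treats the unmarked coefficient exponent as an open question.

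The missing factor $h$ is recovered by marking the good boundary. Since $-(K_X+(1-1/h)S)\simq(u_{n+1}/h^2)S$ is ample, Lemma~\ref{lem:book}(1) gives a $1/h$-lc $B_h\simq-K_X$ with coefficient exactly $1-1/h$ along $S$. Take $D_h=QS$. Then lc along $S$ reads $1-1/h+t\bigl(Q-(1-1/h)\bigr)\le1$, i.e.\ $t\le h/u_{n+1}$.
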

\begin{proof}
Set
\begin{equation}\label{eq:sylvester}
 u_1=h,\qquad u_{j+1}=u_j(1+u_j),\qquad
 q_j=1+u_j,\qquad U=u_{n+1},\qquad L=U/h.
\end{equation}
The recursion gives $L=\prod_{j=1}^n q_j$. The integers $q_j$ are pairwise coprime: every later $u_j$ is divisible by every earlier $q_i$, hence $q_j\equiv1\pmod{q_i}$ for $i<j$. Define
\[
 a_j=L/q_j,\qquad X_{n,h}=\PP(1,a_1,\ldots,a_n),
\]
and let $S$ be the coordinate divisor corresponding to the weight one. The gcd of $a_1,\ldots,a_n$ is one, since the factors $q_j$ are pairwise coprime. The presence of the weight one gives the other well-formedness conditions. Thus the usual class-group and canonical-divisor formulas for weighted projective spaces apply; see \cite{Dolg82}. In particular $S$ is an ample $\Q$-Cartier divisor, the variety is $\Q$-factorial of Picard number one, and
\[
 -K_X\simq QS,\qquad Q=1+\sum_{j=1}^n a_j.
\]

Telescoping the recursion in \eqref{eq:sylvester} gives
\[
 \sum_{j=1}^n\frac1{q_j}
 =\sum_{j=1}^n\left(\frac1{u_j}-\frac1{u_{j+1}}\right)
 =\frac1h-\frac1U.
\]
Consequently
\begin{equation}\label{eq:Q}
 Q=1-\frac1h+\frac{U}{h^2},\qquad
 h\bigl(K_X+(1-1/h)S\bigr)\sim-LS.
\end{equation}
Every weight divides $L$, so $LS$ is Cartier. The second divisor in \eqref{eq:Q} is therefore an integral Cartier divisor.

The toric pair $(X,(1-1/h)S)$ is klt. To check this, its log discrepancy function has positive values on every ray generator: $1/h$ on the ray of $S$ and one on all the others. It is linear on each cone and positive on every nonzero point of that cone. On a toric resolution all crepant coefficients are therefore less than one, and the boundary has simple normal crossing support. Lemma~\ref{lem:snc}, with the minimum of the finitely many positive discrepancies on that resolution, proves klt for all valuations. Since the log canonical divisor has Cartier index dividing $h$, every log discrepancy on every resolution lies in $h^{-1}\Z$. Positivity thus makes all log discrepancies at least $1/h$. The divisor $S$ attains $1/h$. Removing the boundary shows that the underlying Fano variety is $1/h$-lc.

For clarity, the weighted intersection formula can be read from the Hilbert series
\[
 \frac{1}{(1-z)\prod_{j=1}^n(1-z^{a_j})}.
\]
The leading coefficient of its Hilbert quasi-polynomial is $1/(n!\prod_j a_j)$; hence $S^n=1/\prod_j a_j$. Also $\prod_j a_j=L^{n-1}$. Using \eqref{eq:Q}, we obtain
\begin{equation}\label{eq:volumeexample}
 \vol(-K_X)=\frac{Q^n}{L^{n-1}}
 =\frac{U}{h^{n+1}}
 \left(1+\frac{h^2-h}{U}\right)^n.
\end{equation}
Induction in \eqref{eq:sylvester} gives $u_j\sim h^{2^{j-1}}$, and in particular $U\sim h^{2^n}$. For $n\ge2$ the parenthesis in \eqref{eq:volumeexample} tends to one. This proves \eqref{eq:sharpvol}.

Finally, $-(K_X+(1-1/h)S)\simq(U/h^2)S$ is ample. By Lemma~\ref{lem:book}(1), complete this marked boundary to $B_h\simq-K_X$ while retaining $1/h$-lc and the coefficient $1-1/h$ along $S$. Take $D_h=QS$. Lc interpolation implies that the coefficient along $S$ is at most one:
\[
 1-1/h+t\{Q-(1-1/h)\}\le1.
\]
Substitute \eqref{eq:Q} to get $t\le h/U$. This proves \eqref{eq:sharpinterp} and the optimality assertions in Theorems~\ref{thm:main} and~\ref{thm:interp}.
\end{proof}

The same examples show that the exponent in Corollary~\ref{cor:alpha} is optimal. Indeed, with $\Delta=(1-1/h)S$ and $H=(U/h^2)S$, one has $H\simq-(K_X+\Delta)$ and $\lct(X,\Delta;H)\le h/U$.

\subsection{The log Fano assumption is necessary}
It is not enough to assume that $X$ itself is $\eta$-lc and that $X$ is of Fano type without a parameter. For example, let $X=\mathbb F_m$ be a Hirzebruch surface, $m\ge3$, with negative section $S$ and fibre $F$. Thus
\[
 S^2=-m,\qquad S\cdot F=1,\qquad F^2=0,\qquad
 -K_X=2S+(m+2)F.
\]
The boundary $\Delta_m=(1-1/m)S$ is klt and
\[
 -(K_X+\Delta_m)=(1+1/m)S+(m+2)F
\]
is ample, since its intersections with $S$ and $F$ are one and $1+1/m$. All $\mathbb F_m$ are consequently of Fano type, and they are smooth, hence $1$-lc. On the other hand, the Zariski decomposition is
\[
 -K_X=(1+2/m)(S+mF)+(1-2/m)S,
\]
where the first summand is nef and orthogonal to the negative section. Therefore
\[
 \vol(-K_X)=\frac{(m+2)^2}{m}\longrightarrow\infty.
\]
This example explains the formulation of $\eta$-Fano type used in the main theorem. Products with projective spaces give the analogous distinction in higher dimension.

\subsection{Additional structure and further questions}
The estimate over a bounded base also yields improvements when the fibre volume is already bounded independently of the singularity parameter.

\begin{corollary}\label{cor:curveimproved}
Let $n\ge2$, and let $(X,\Delta)$ be an $\eta$-lc log Fano $n$-fold admitting a contraction to a smooth curve whose general fibre is terminal Fano. There is $E_n>0$, depending only on $n$, such that
\[
 \vol(-K_X)\le E_n\eta^{-(2^{n-1}-1)}.
\]
In particular, the exponent is seven for fourfolds under this additional hypothesis.
\end{corollary}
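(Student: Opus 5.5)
The plan is to apply Proposition~\ref{prop:flag} with $b=1$ directly to the given contraction $f:X\to Z$, without passing through Proposition~\ref{prop:boundedbase}. Since $Z$ is a smooth curve, the proof of the flag estimate for $b=1$ consists only of the curve step. There the polarization degree enters only through the term $(b-1)d+2=2$, so no bounded polarization of the base is needed. The fibre-volume bound $M$ should then come from the terminal hypothesis rather than from induction on $\eta$. A general fibre $F$ is a terminal Fano $(n-1)$-fold, so Lemma~\ref{lem:projections} gives $\vol(-K_F)\le V_{n-1}^{\mathrm{term}}$, which is independent of $\eta$.

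For the threshold $\mu$, I would restrict a log resolution of $(X,\Delta)$ over a smooth open subset of $Z$. This makes $(F,\Delta_F)$ an $\eta$-lc log Fano pair, because $-(K_X+\Delta)|_F=-(K_F+\Delta_F)$ is ample and the normal bundle of a fibre is trivial. Corollary~\ref{cor:logfanointerp} then shows that $(F,(1-t)\Delta_F+tD_F)$ is klt for all $0\le t\le\mu_{n-1}(\eta)=m_{n-1}\eta^{p_{n-1}}$ and all effective $D_F\simq-K_F$. I will take $\mu=m_{n-1}\eta^{p_{n-1}}$, which is at most $1$ because $c_r\le1/4$.

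Substituting into \eqref{eq:flag} with $b=1$, $r=n-1$, gives
\[
\vol(-K_X)\le \frac{n!}{(n-1)!}\cdot 2\cdot V_{n-1}^{\mathrm{term}}\,m_{n-1}^{-1}\eta^{-p_{n-1}} ,
\]
and $p_{n-1}=2^{n-1}-1$. So one can take $E_n=2n\,V_{n-1}^{\mathrm{term}}m_{n-1}^{-1}$. For $n=4$ the exponent is $p_3=7$.

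The main point to check is that the hypotheses of Proposition~\ref{prop:flag} really hold. That proposition asks for the \emph{given} pair $(X,\Delta)$ to be an $\eta$-lc log Fano pair, which is assumed here. It also needs the fibre-wise klt interpolation statement for the restricted boundary $\Delta_F$. The subtle step is the curve argument inside the proof of the flag estimate. It uses Lemma~\ref{lem:sections} for a fibre divisor over a smooth curve and the identity $K_X\equiv -L+0\cdot F$ (since $b-1=0$). It also uses $\vol(-K_F)\le M$ with $-K_X|_F=-K_F$, which holds by adjunction for a general fibre. One further detail: the proposition is stated for a contraction, so $f$ must have connected fibres, as the word "contraction" in the hypothesis ensures. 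No $\Q$-factoriality or Mori fibre structure is needed, because the flag argument uses only connectedness and the ampleness of $-(K_X+\Delta)$.
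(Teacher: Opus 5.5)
Your proposal is correct and is the paper's own argument. The paper applies Proposition~\ref{prop:flag} with $b=1$, $r=n-1$, $M=V_{n-1}^{\mathrm{term}}$ from Lemma~\ref{lem:projections} and $\mu=m_{n-1}\eta^{p_{n-1}}$ from Corollary~\ref{cor:logfanointerp}, so the base degree drops out and $E_n=2nV_{n-1}^{\mathrm{term}}m_{n-1}^{-1}$; your checks of the hypotheses (restricting a log resolution to a general fibre, $\mu\le1$, connected fibres) fill in details the paper leaves implicit.
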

\begin{proof}
Use Proposition~\ref{prop:flag} with $b=1$, $r=n-1$, the bound $M=V_{n-1}^{\mathrm{term}}$, and $\mu=m_{n-1}\eta^{p_{n-1}}$. The base degree disappears. One may take $E_n=2nV_{n-1}^{\mathrm{term}}m_{n-1}^{-1}$.
\end{proof}

The extra hypothesis cannot be imposed on the final morphism in Proposition~\ref{prop:boundedbase}. Lifted birational contractions need not preserve terminality, and composing base contractions need not preserve the Fano condition on the resulting generic fibre. This is why the general argument still uses the variable-parameter volume induction.

The optimal numerical constant in \eqref{eq:main} remains open here. Formula~\eqref{eq:volumeexample} has leading coefficient one with the indicated singularity parameters, while the upper-bound argument loses substantial constants in projection, finite transfer and the change of base. A numerical value of $C_4$ would also require an effective version of the bounded polarization used in Proposition~\ref{prop:boundedbase}.

There is a related coefficient question. Theorem~\ref{thm:interp} gives an upper bound of order $\eta^{-p_n}$ for the coefficient of any prime in an effective anticanonical divisor. If the good boundary already contains the same prime with coefficient $1-\eta$, its coefficient in $D$ is at most
\[
 1-\eta+c_n^{-1}\eta^{-(p_n-1)}.
\]
This follows by testing the interpolation inequality at that prime. The marked examples above have coefficients of order $\eta^{-(2^n-2)}$, so this marked exponent is sharp. Whether the same exponent holds without the marked-boundary assumption is a separate problem; it is not needed for the volume theorem.

Finally, Theorem~\ref{thm:relative} supplies an exponent $2^r$ for the relative discrepancy estimate with bounded horizontal degree. We do not claim its optimality in every relative dimension. Better estimates for particular generic fibres or restricted degenerations could sharpen the corresponding interpolation constants or the bounds under additional geometric assumptions, even though the unrestricted volume exponent is fixed by Proposition~\ref{prop:examples}.

\end{document}